\numberwithin{equation}{section}
\def\proof{\smallskip\noindent {\it Proof: \ }}
\def\endproof{\hfill$\square$\medskip}
\newtheorem{theorem}{Theorem}[section]
\newtheorem{proposition}[theorem]{Proposition}
\newtheorem{question}[theorem]{Question}
\newtheorem{corollary}[theorem]{Corollary}
\newtheorem{conjecture}[theorem]{Conjecture}
\newtheorem{lemma}[theorem]{Lemma}
\newtheorem{problem}[theorem]{Problem}
\theoremstyle{definition}
\newtheorem{definition}[theorem]{Definition}
\newtheorem{example}[theorem]{Example}
\newtheorem{remark}[theorem]{Remark}
\DeclareMathOperator{\skel}{Skel}
\DeclareMathOperator\lk{\mathrm{lk}}
\DeclareMathOperator\st{\mathrm{st}}
\DeclareMathOperator{\intr}{\mathrm{int}}
\DeclareMathOperator{\conv}{\mathrm{conv}}
\newcommand{\relint}{\mathrm{relint}}
\newcommand{\R}{{\mathbb R}}
\newcommand{\Sp}{\mathbb{S}}
\title{Missing faces of neighborly and nearly neighborly polytopes and spheres}
\author{
	Isabella Novik\thanks{Research of IN is partially\textsl{} supported by NSF grant  DMS-2246399.}\\
	\small Department of Mathematics\\[-0.8ex]
	\small University of Washington\\[-0.8ex]
	\small Seattle, WA 98195-4350, USA\\[-0.8ex]
	\small \texttt{novik@uw.edu}
	\and 
	Hailun Zheng\thanks{Research of HZ is partially\textsl{} supported by NSF grant DMS-2246793.} \\
	\small Department of Mathematics\\[-0.8ex]
	\small University of Hawai`i at M\={a}noa\\[-0.8ex]
	\small 2565 McCarthy Mall, Honolulu, HI 96822, USA \\[-0.8ex]
	\small \texttt{hailunz@hawaii.edu}
}			
\begin{document}	
	\maketitle
	\begin{abstract}
		For a $(d-1)$-dimensional simplicial complex $\Delta$ and  $1\leq i\leq d$, let $f_{i-1}$ be the number of $(i-1)$-faces of $\Delta$ and $m_i$ be the number of missing $i$-faces of $\Delta$. In the nineties, Kalai asked for a characterization of the $m$-numbers of simplicial polytopes and spheres --- a problem that remains wide open to this day. Here, we study the $m$-numbers of nearly neighborly and neighborly polytopes and spheres.  Specifically, for $d\geq 4$, we obtain a lower bound on $m_{\lfloor d/2\rfloor}$ in terms of $f_0$ and $f_{\lfloor d/2\rfloor-1}$ in the class of all $(\lfloor d/2\rfloor-1)$-neighborly $(d-1)$-spheres. For neighborly spheres, 	we (almost) characterize the $m$-numbers of 
		$2$-neighborly $4$-spheres, and we show that, for all odd values of $k$, there exists an infinite family of $k$-neighborly simplicial $2k$-spheres with $m_{k+1}=0$. Along the way, we provide a simple numerical condition based on the $m$-numbers that allows to establish non-polytopality of some neighborly odd-dimensional spheres.
	\end{abstract}
	\section{Introduction}
	A missing face $F$ of a simplicial complex $\Delta$ is a subset of the vertex set of $\Delta$ that is not a face but such that all proper subsets of $F$ are faces.  The missing faces of $\Delta$ correspond to the minimal generators of the Stanley--Reisner ideal of $\Delta$. In other words, the collection of the missing faces, together with the vertex set, contains the same information as the collection of faces. Yet, while the face numbers of simplicial polytopes and spheres are completely characterized by the celebrated $g$-theorem (see \cite{BilleraLee,Stanley80} and \cite{Adiprasito-g-conjecture,AdiprasitoPapadakisPetrotou, KaruXiao,PapadakisPetrotou}), much less is known at present about the numbers of missing faces of polytopes and spheres. Specifically, the following problem of Gil Kalai (see Problem 19.5.42 in \cite{Kalai-skeletons}) remains wide open. For a $(d-1)$-dimensional simplicial complex or a simplicial $d$-polytope $\Delta$, let $f_i=f_i(\Delta)$ be the number of $i$-faces of $\Delta$ and let $m_i=m_i(\Delta)$ be the number of missing $i$-faces of $\Delta$. Define the $f$-vector and the $m$-vector of $\Delta$ as  $$f(\Delta)=(f_{-1},f_0,\dots,f_{d-1}) \quad \text{and} \quad m(\Delta)=(m_1, m_2, \ldots, m_d),\, \text{respectively}.$$
	
	\begin{problem} \label{Problem:Kalai}
		Characterize the $m$-vectors of simplicial $d$-polytopes and the $m$-vectors of simplicial $(d-1)$-spheres in terms of their $f$-vectors. 
	\end{problem}
	
	Tight upper bounds on the $m$-numbers of simplicial $d$-polytopes and simplicial $(d-1)$-spheres in terms of their $f$-numbers (equivalently, their $g$-numbers) were established by Nagel \cite{Nagel}. Nagel proved that the $m$-numbers are maximized by the Billera--Lee polytopes, thus settling a conjecture proposed by Kalai, Kleinschmidt, and Lee \cite[Conjecture 2]{Kalai-survey}. On the other hand, the lower bounds on the numbers of missing faces of spheres and polytopes remain very mysterious.
	The goal of this paper is to start developing such bounds. 
	
	Finding lower bounds on the $m$-numbers is ultimately related to several long-standing problems in extremal combinatorics. For instance, the clique density problem asks what is the minimum number of $r$-cliques in a graph with $f_0$ vertices and $f_1$ edges; see \cite{LPS, Nikoforov, Razborov,Reiher} for spectacular recent advances on this problem. Since for a simplicial complex $\Delta$, the number of $3$-cliques in the graph of $\Delta$ is equal to $f_2+m_2$, any lower bound on the number of $3$-cliques, such as for instance  Goodman's bound \cite{Goodman, MoonMoser,NordhausStewart}, gives a lower bound on $m_2$ in terms of $f_0,f_1,f_2$. However, many graphs cannot be realized as graphs of simplicial $(d-1)$-spheres, and so, even a tight lower bound on the number of $3$-cliques does not necessarily yield a tight lower bound on $m_2$ in Kalai's question. Consequently, the problem of establishing tight lower bounds on the $m$-numbers in its full generality is rather unmanageable at present. 
	
	In this paper, we mostly concentrate on the classes of 
	simplicial $d$-polytopes and $(d-1)$-spheres that are $\lfloor d/2\rfloor$- or $(\lfloor d/2\rfloor-1)$-neighborly. We refer to the former polytopes and spheres as neighborly and to the latter as nearly neighborly. (For instance, every simplicial polytope of dimension $4$ or $5$ and every simplicial sphere of dimension $3$ or $4$ are nearly neighborly.) In each of these two classes, almost all of the $m$-numbers are fixed functions of $f_0$ and $d$. 
	Specifically, every $k$-neighborly $(2k-1)$-sphere with $n\geq 2k+2$ vertices (i.e., not the boundary of a simplex) has $m_i=0$ for $i\neq k$
	and $m_k={n-k-1\choose k+1}+{n-k-2 \choose k}$. 
	(For $i<k$, the result about zeros follows from neighborliness, and for $i>k$, it is a consequence of the Alexander duality.)  
	Similarly, a $k$-neighborly $2k$-sphere with $n\geq 2k+3$ vertices has $m_i=0$, for all $i\neq k, k+1$, and $m_k={n-k-2 \choose k+1}$. On the other hand, $m_{k+1}$ could vary, and the only currently known condition is $m_{k+1}\leq {n-k-3 \choose k}$, with the upper bound achieved by the boundary complex of the cyclic polytope. Are there $k$-neighborly $2k$-spheres with $m_{k+1}$ equal to zero? More generally, what integers between $0$ and ${n-k-3 \choose k}$ can be realized as $m_{k+1}$ of some $k$-neighborly $2k$-sphere with $n$ vertices? How do the $m$-numbers of nearly neighborly spheres behave? More precisely, in the class of $(k-1)$-neighborly $(d-1)$-spheres with $n$ vertices, where $d\in\{2k,2k+1\}$, is $m_k$ bounded from below by some function of $n$ and $f_{k-1}$? These are the types of questions we address in the paper.

	Our main results can be summarized as follows.
	\begin{itemize}
		\item Let $k\geq 2$. Extending Goodman's bound on $m_2$,  we derive a lower bound on $m_k$ in terms of $n$, $f_{k-1}$, and $f_k$ in the  class of all simplicial complexes of dimension $\geq k-1$ with $n$ vertices; see Theorem \ref{lm: k-1-neighborly higher dimensional spheres}.

		\item As a corollary of the above result, for $k\geq 2$ and $d\in\{2k,2k+1\}$, we establish a lower bound on $m_k$ in terms of $n$ and $f_{k-1}$ in the class of all $(d-1)$-dimensional $(k-1)$-neighborly Eulerian complexes with $n$ vertices; see Corollary \ref{lm: near neighborly spheres}. This provides a step toward a resolution of Problem \ref{Problem:Kalai} for $3$-spheres and $4$-spheres 
		(Section 5.2) as well as a new upper bound on the number of edges of $4$-dimensional flag Eulerian complexes with $n$ vertices (Corollary \ref{cor:flag}).
		
		\item We (almost) characterize the $m$-vectors of $2$-neighborly $4$-spheres with $n$ vertices. Specifically, we show that for all $n\geq 9$ and $0\leq m\leq {n-5 \choose 2}$, except possibly for $m={n-5 \choose 2}-1$, there exists a $2$-neighborly $4$-sphere with $n$ vertices and $m_3=m$; see Theorem \ref{thm: neighborly 4-spheres}. 
		
		\item We also prove that for every odd $k$ and $n\geq 2k+4$, there exists a $k$-neighborly simplicial $(2k+1)$-polytope with $n$ vertices and $m_{k+1}=0$; see Theorem \ref{thm: neighborly 2k-sphere m_k+1=0}.
		
		\item Along the way, we show that if $P$ is a $(k+1)$-neighborly $(2k+2)$-polytope with $n+1$ vertices, then all vertex links of $\partial P$ must have $m_{k+1}={n-k-3 \choose k}$. Consequently, if a neighborly $(2k+1)$-sphere has a vertex link that violates this equality, then the sphere is not the boundary complex of any polytope; see Theorem \ref{non-polytopal}.
	\end{itemize}
	
	Testing polytopality is a hard problem that received a lot of attention in the recent years \cite{GMW,Pfeifle-20}. Our numerical condition in the last bullet point is motivated by Gr\"unbaum--Sreedharan's non-polytopal $3$-sphere with eight vertices \cite{GruSree} as well as by works of Perles (unpublished) and Bagchi and Datta \cite{BD2}. Example \ref{ex:non-polytopality} provides a list of several $3$- and $5$-dimensional vertex-transitive neighborly spheres whose non-polytopality is an immediate consequence of this numerical condition.
	
	The proofs of our main theorems rely on such results and techniques as the Dehn--Sommerville relations, characterizations of $k$-stacked spheres, Pachner’s bistellar flips, Shemer’s sewing operations, and Gale diagrams. Throughout the paper, we also discuss many open problems.

	The rest of the paper is structured as follows. In Section 2, we review basics of simplicial polytopes and spheres. In Section 3, we summarize previous results on the upper bounds of the $m$-numbers of simplicial spheres and derive simple corollaries. As an application, we show in Section 4 that numerical conditions on the $m$-numbers can be used to establish non-polytopality of some neighborly odd-dimensional spheres. In Section 5, we prove a generalization of Goodman's bound and use this result to provide lower bounds on the $m$-numbers of nearly neighborly Eulerian complexes. Section 6 is devoted to the $m$-numbers of $2$-neighborly $4$-spheres. We end in Section 7 by proving that for 
	$k=2$ as well as for every odd $k\geq 3$, there exists an infinite family of $k$-neighborly simplicial $(2k+1)$-polytopes with $m_{k+1}=0$. 
	
	\section{Review of simplicial polytopes and spheres}
	In this section we review results and definitions related to simplicial polytopes and spheres that will be used throughout the paper.
	
	A {\em polytope} $P \subseteq \R^d$ is the convex hull of a finite set of points in $\R^d$. The {\em dimension} of $P$ is the dimension of the affine span of $P$ and we say that $P$ is a {\em $d$-polytope} if $P$ is $d$-dimensional. An important example of a $d$-polytope is the {\em $d$-simplex}. It is defined as the convex hull of $d+1$ affinely independent points and is denoted $\sigma^d$.

	A {\em  (proper) face of a polytope $P$} is the intersection of $P$ with any supporting hyperplane of $P$. The {\em dimension} of a face $F$ of $P$ is the dimension of the affine span of $F$, and we say that $F$ is an {\em $i$-face} if $\dim F=i$. For a vertex $v$ of $P$, a {\em vertex figure of $P$ at $v$}, $P/v$, is the polytope obtained by intersecting $P$ with a hyperplane that separates $v$ from all other vertices of $P$. A polytope $P$ is called {\em simplicial} if all of its (proper) faces are simplices. 
	
	An (abstract) {\em simplicial complex} $\Delta$ with vertex set $V=V(\Delta)$ is a {non-empty} collection of subsets of $V$ that is closed under inclusion and contains all singletons: $\{v\}\in\Delta$ for all $v\in V$. An example of a simplicial complex on $V$ is the collection of all subsets of $V$, denoted $\overline{V}$, and called the (abstract) {\em simplex} on $V$.
	
	The elements of a simplicial complex $\Delta$ are called {\em faces} of $\Delta$. A face $F$ of $\Delta$ has {\em dimension} $i$ if $|F|=i+1$; in this case we say that $F$ is an {\em $i$-face}. We usually refer to $0$-faces as {\em vertices}, $1$-faces as {\em edges}, and the maximal under inclusion faces as {\em facets}. For brevity we denote a vertex by $v$ and an edge by $uv$ instead of $\{v\}$ and $\{u, v\}$ respectively. The {\em dimension of $\Delta$} is $\max\{\dim F: F\in \Delta\}$. For instance, the dimension of $\overline{V}$ is $|V|-1$.
	
	A set $F\subseteq V$ is a {\em missing face} of $\Delta$ if $F$ is not a face of $\Delta$, but every proper subset of $F$ is a face of $\Delta$. In analogy with faces, a {\em missing $i$-face} is a missing face of size $i+1$.
	The collection of the missing faces of $\Delta$, together with the vertex set of $\Delta$, uniquely determines $\Delta$. A complex $\Delta$ is {\em flag} if all missing faces of $\Delta$ are $1$-dimensional.
	
	There are several subcomplexes of a simplicial complex $\Delta$ that will be useful. If $F$ is a face of $\Delta$, then the {\em star of $F$} and the  {\em link of $F$} are 
	$$\st(F)=\st(F,\Delta)=\{\sigma \in \Delta \ : \  \sigma\cup F\in\Delta\} \;\;\text{ and }\;\;\lk(F)=\lk(F,\Delta)= \{\sigma\in \st(F) \ : \ \sigma\cap F=\emptyset\}.$$ The subcomplex of $\Delta$ consisting of all faces of $\Delta$ of dimension $\leq k$ is called the {\em $k$-skeleton} of $\Delta$ and is denoted $\skel_k(\Delta)$; the $1$-skeleton of $\Delta$ is also known as the {\em graph} of $\Delta$.
	A subcomplex of $\Delta$ is called {\em induced} if it is of the form $\Delta[W]=\{F\in \Delta: F\subseteq W\}$ for some $W\subseteq V(\Delta)$. Finally, if $\Delta$ and $\Gamma$ are two simplicial complexes  on disjoint vertex sets, then the {\em join} of $\Delta$ and $\Gamma$ is 
	$$\Delta*\Gamma=\{\sigma\cup \tau: \sigma\in \Delta, \tau \in \Gamma\}.$$ For brevity, we denote the cone over a complex $\Gamma$ with apex $v$ by $\Gamma*v$.

	A simplicial complex $\Delta$ is called a {\em simplicial $d$-ball} ({\em simplicial $(d-1)$-sphere}, resp.) if its geometric realization is homeomorphic to a $d$-dimensional ball ($(d-1)$-dimensional sphere, resp.). Occasionally, we will also work with {\em piecewise linear} balls and spheres ({\em PL} for short) as well as with {\em $\R$-homology balls}, which we will simply call {\em homology balls}. All PL balls (spheres, resp.) are simplicial balls (spheres, resp), and all simplicial balls are  homology balls.  
	A face $F$ of a (PL, simplicial, or homology) ball $B$ is called a {\em boundary face} if the link of $F$ has vanishing homology over $\R$, and it is an {\em interior face} otherwise. A {\em minimal interior face} of $B$ is an interior face that contains no other interior faces. The set of all boundary faces of $B$ forms a simplicial complex called the {\em boundary complex} of $B$; it is denoted $\partial B$.
	
	If $P$ is a simplicial polytope, then the {\em boundary complex} of $P$, denoted $\partial P$, consists of the collection of vertex sets of (proper) faces of $P$. The boundary complex of $P$  is a simplicial complex. We refer to missing faces of $\partial P$ as missing faces of $P$. We also say that two simplicial polytopes $P$ and $Q$ are {\em combinatorially equivalent} if $\partial P$ and $\partial Q$ are isomorphic simplicial complexes.
	
	The class of boundary complexes of simplicial $d$-polytopes is contained in the class of simplicial $(d-1)$-spheres. By Steinitz' theorem, these two classes are equal when $d=3$; however, the inclusion is strict for all $d>3$, see \cite{GoodmanPollack, Kal, PfeiZieg}. We say that a simplicial sphere $\Delta$ is {\em polytopal} if it is the boundary complex of a simplicial polytope. The links of polytopal spheres are polytopal. In fact, the boundary complex of $P/v$ is the link  $v$ in $\partial P$.
	
	In what follows, let $\Delta$ be either a simplicial $d$-polytope or a $(d-1)$-dimensional simplicial complex. Denote by $f_i=f_i(\Delta)$ the number of $i$-faces of $\Delta$ and by $m_i=m_i(\Delta)$ the number of missing $i$-faces of $\Delta$. In particular, $f_i=0$ if $i>d-1$. Let $$f(\Delta)=(f_{-1}, f_0, \dots, f_{d-1}) \quad\mbox{and}\quad m(\Delta)=(m_1, m_2, \dots, m_d)$$
	be the {\em $f$-vector} and the {\em $m$-vector} of $\Delta$, respectively. The {\em $h$-vector} of $\Delta$, $h(\Delta)=(h_0, h_1,\ldots,h_d)$, is obtained from the $f$-vector by the following invertible linear transformation:
	$$h_j= h_j(\Delta)=\sum_{i=0}^{j} (-1)^{j-i}\binom{d-i}{d-j}f_{i-1}(\Delta) \quad \text{for }\, 0\leq j\leq d.$$ 
	The {\em $g$-vector} of $\Delta$, $g(\Delta)=(g_0,g_1,\ldots, g_{\lfloor d/2\rfloor})$ is then defined by letting $g_0=1$ and $g_j=h_j-h_{j-1}$ for $1\leq j\leq \lfloor d/2\rfloor$.
	
	When $\Delta$ is a simplicial $(d-1)$-sphere, the $h$-numbers of $\Delta$ satisfy the Dehn--Sommerville relations: $h_i=h_{d-i}$ for all $0\leq i\leq d$ (see \cite{Klee64}). Hence in this case, the $g$-vector of $\Delta$ completely determines the $f$-vector of $\Delta$. When $d$ is odd, we will also sometimes consider $g_{(d+1)/2}:=h_{(d+1)/2}-h_{(d-1)/2}=0$.
	
	We say that $\Delta$ is {\em $i$-neighborly} if $f_{i-1}(\Delta)=\binom{f_0(\Delta)}{i}$. (This notion is only interesting when $i\geq 2$ as any simplicial complex is $1$-neighborly.) Simplicial $d$-polytopes and simplicial $(d-1)$-spheres with at least $d+2$ vertices can be at most $\lfloor d/2 \rfloor$-neighborly; in the case they are $\lfloor d/2\rfloor$-neighborly, we simply call them {\em neighborly}. Neighborly polytopes and spheres abound in nature; see \cite{NZ-neighborly, Padrol-13, Shemer}. The Upper Bound Theorem \cite{McMullen70, Stanley75} asserts that among all simplicial spheres of a fixed dimension and with a fixed number of vertices, the neighborly spheres simultaneously maximize all the face numbers.
	
	One famous example of neighborly polytopes is given by the family of cyclic polytopes.
	A {\em cyclic $d$-polytope} on $n$ vertices, denoted $C(d, n)$, is defined as the convex hull of $n>d$ distinct points on the moment curve $M(t)=\{(t, t^2, \dots, t^d): t\in \R\}$. The facets of a cyclic polytope are characterized by the Gale evenness condition. To state this condition, we let the vertices of $C(d,n)$ be $v_i=M(t_i)$, where $1\leq i\leq n$ and $t_1<t_2<\dots <t_n$. Then for a $d$-subset $I=\{i_1<\dots<i_d\}$ of $[n]:=\{1,2,\ldots,n\}$, the set $F_I=\conv(v_i : i\in I)$ is a facet of $C(d,n)$ if and only if any two elements of $[n]\backslash I$ are separated by an even number of elements from $I$; see \cite[Theorem 0.7]{Ziegler}. In particular, the combinatorial type of $C(d,n)$ is independent of the choice of $t_1,\ldots,t_n$, and so from now on we will refer to $C(d,n)$ as {\em the} cyclic polytope.

	While neighborly spheres maximize the face numbers, stacked spheres ---a notion we are about to define--- minimize the face numbers.
	A homology $d$-ball $\Delta$ is {\em $i$-stacked} if it has no interior faces of dimension $\leq d-i-1$. A simplicial $(d-1)$-sphere is {\em $i$-stacked} if it is the boundary complex of an $i$-stacked homology $d$-ball.  For instance, the $d$-simplex  is the only $0$-stacked $d$-ball, and its boundary complex is the only  $0$-stacked $(d-1)$-sphere. Any $1$-stacked $(d-1)$-sphere is polytopal and can be represented as the connected sum of the boundary complexes of $d$-simplices; $1$-stacked spheres are also called {\em stacked} spheres. 
	
	There are several numerical characterizations of $i$-stacked spheres. 
	First, when $0\leq i\leq \lfloor d/2 \rfloor -1$,  a simplicial $(d-1)$-sphere is $i$-stacked if and only if $g_{i+1}=0$. This criterion is a part of the Generalized Lower Bound Theorem; see \cite{Kalai87} for the case of $i=1$ and \cite[Theorem 1.3]{MuraiNevo2013} for the general case. Second, when $0\leq i\leq \lfloor d/2 \rfloor -1$, a simplicial $(d-1)$-sphere is $i$-stacked if and only if $m_{d-i}=g_i$. Furthermore, if $d=2i+1$ is odd and a simplicial $(d-1)$-sphere is $i$-stacked, then $m_{i+1}=g_i$;  see \cite[Corollary 1.4]{MNZ}.

	We close this section with a discussion of the $g$-theorem, which provides a complete characterization of the $f$-vectors of simplicial spheres. For the case of simplicial polytopes, this result was established in the eighties; see \cite{BilleraLee, Stanley80}. The proof for simplicial spheres is much more recent; see \cite{Adiprasito-g-conjecture, AdiprasitoPapadakisPetrotou, KaruXiao, PapadakisPetrotou}. The statement relies on the function $m^{\langle k \rangle}$ defined as follows. If $m$ and $k$ are positive integers, then there is a unique expression of $m$ in the form 
	$$m=\binom{a_k}{k}+\binom{a_{k-1}}{k-1}+\dots +\binom{a_i}{i}, \quad \mbox{where } a_k>a_{k-1}>\dots >a_i\geq i>0.$$
	Using this expression, we define
	$$m^{\langle k\rangle}=\binom{a_k+1}{k+1}+\binom{a_{k-1}+1}{k}+\dots +\binom{a_i+1}{i+1}\text{ and } m_{\langle k\rangle}=\binom{a_k-1}{k-1}+\binom{a_{k-1}-1}{k-2}+\dots +\binom{a_i-1}{i-1}.$$
	We also define $0^{\langle k\rangle}=0$ and $0_{\langle k\rangle}=0$ for $k\geq 1$.

	\begin{theorem}[$g$-theorem]
		An integer vector $h=(h_0, h_1, \dots, h_d)$ is the $h$-vector of a simplicial $(d-1)$-sphere if and only if 
		\begin{itemize}
			\item $h_i=h_{d-i}$ for all $0\leq i\leq d$;
			\item $1=h_0\leq h_1\leq \cdots \leq h_{\lfloor d/2\rfloor}$;
			\item The numbers $g_i:=h_i-h_{i-1}$ satisfy $g_{i+1}\leq g_i^{\langle i\rangle}$ for all $1\leq i\leq \lfloor d/2 \rfloor -1$.
		\end{itemize}
	\end{theorem}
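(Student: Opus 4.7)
The plan is to handle the two directions of the biconditional separately, since each is a landmark result with its own history.

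For sufficiency, the approach is the Billera--Lee construction. Given an integer vector $h=(h_0,\ldots,h_d)$ satisfying the three listed conditions, Macaulay's theorem on Hilbert functions first produces an order ideal of monomials whose rank-function realizes the prescribed $g$-vector $(g_0,g_1,\ldots,g_{\halffloor})$. This combinatorial data can then be converted into a shellable simplicial $d$-ball $B$ sitting inside the boundary complex of a cyclic polytope $C(d+1,n)$ for an appropriate $n$, designed so that the shelling $h$-numbers of $B$ match the prescribed data in the first $\halffloor+1$ slots. Completing $B$ by gluing in a second ball using the Gale evenness structure produces a simplicial $d$-polytope whose boundary is a sphere realizing the given $h$-vector. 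Since this construction yields a polytopal sphere, the same family handles both the polytope and sphere versions of sufficiency at once.

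For necessity, one separates the three conditions. The Dehn--Sommerville symmetry $h_i=h_{d-i}$ follows from the Euler relation applied to $\Delta$ and the links of its faces, and is already in \cite{Klee64}. The remaining two conditions both reduce to establishing the strong Lefschetz property for the Artinian reduction $A=\field[\Delta]/\Theta$ of the Stanley--Reisner ring of $\Delta$ modulo a generic linear system of parameters $\Theta$. Indeed, if there exists $\omega\in A_1$ such that $\omega^{d-2i}\colon A_i\to A_{d-i}$ is an isomorphism for all $i\leq \halffloor$, then $A/\omega A$ is a standard graded $\field$-algebra whose Hilbert function is $(g_0,g_1,\ldots,g_{\halffloor})$. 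Applying Macaulay's theorem to $A/\omega A$ then yields simultaneously the unimodality $h_{i-1}\leq h_i$ (from injectivity of multiplication by $\omega$) and the growth bound $g_{i+1}\leq g_i^{\langle i\rangle}$ (from $A/\omega A$ being a standard graded quotient of a polynomial ring).

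The main and by far hardest step is establishing this Lefschetz property for a general, not necessarily polytopal, simplicial sphere. For polytopal spheres, Stanley's original argument interprets $A$ as the rational cohomology ring of the projective toric variety associated to the polytope's normal fan and invokes the classical hard Lefschetz theorem. For arbitrary simplicial spheres no such toric variety is available, and this was the core obstacle that kept Kalai's $g$-conjecture open for nearly four decades. Its recent resolution by Adiprasito, together with the independent arguments of Papadakis--Petrotou and Karu--Xiao, replaces algebraic geometry by a direct proof of generic anisotropy of the symmetric form induced by multiplication in $A$, carried out via differential operators on polynomial rings (first in characteristic $2$, then transferred back to $\Q$). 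Reproducing this argument lies well beyond the scope of a proof sketch, so the plan is simply to cite those works for the Lefschetz input and to compose it with the Macaulay-style and Billera--Lee arguments above.
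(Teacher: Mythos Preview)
The paper does not prove this theorem at all: it is stated at the end of Section~2 as background, with the sufficiency attributed to \cite{BilleraLee}, the necessity for polytopes to \cite{Stanley80}, and the necessity for arbitrary simplicial spheres to \cite{Adiprasito-g-conjecture, PapadakisPetrotou, AdiprasitoPapadakisPetrotou, KaruXiao}. Your outline is a faithful summary of exactly those ingredients---Billera--Lee for construction, Dehn--Sommerville for symmetry, and the strong Lefschetz property (toric for polytopes, the recent anisotropy arguments for general spheres) combined with Macaulay's theorem for the remaining inequalities---so there is no mathematical gap, but you are supplying substantially more than the paper itself, which simply quotes the result and moves on.
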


	\section{Known upper bounds on the $m$-numbers and a few consequences}
	In light of the $g$-theorem, it is natural to try to characterize (or at least to find some necessary conditions on) the $m$-vectors of simplicial spheres. The following theorem (see \cite[Corollary 4.6(a)]{Nagel} and \cite[Corollaries 1.3 and 1.4]{MNZ}) provides tight upper bounds on the $m$-numbers in terms of the $g$-numbers. We refer to \cite{BilleraLee} for the definition of the Billera--Lee polytopes.
	
	\begin{theorem} \label{lem:upper-bounds}
		The $m$-numbers and the $g$-numbers of a simplicial $(d-1)$-sphere $\Delta$ satisfy the following inequalities.
		\begin{enumerate}
			\item For $1\leq k\leq \lceil d/2\rceil -1$, $m_k\leq g_k^{\langle k\rangle}-g_{k+1}$ while $m_{d-k}\leq g_k-(g_{k+1})_{\langle k+1\rangle}$. 
			\item If $d=2k$, then $m_k\leq g_k^{\langle k\rangle}+g_k$.
		\end{enumerate}
		These inequalities are tight: 
		\begin{enumerate}
			\item If $\Delta$ is the boundary complex of a Billera--Lee $d$-polytope, then the above inequalities are attained as equalities for all $k$.
			\item For $1\leq k\leq \lfloor d/2\rfloor -1$, $m_{d-k} = g_k$ if and only $\Delta$ is $k$-stacked. Moreover, if $d=2k+1$ and $\Delta$ is $k$-stacked, then $m_{k+1}=g_k$. 
		\end{enumerate} 
	\end{theorem}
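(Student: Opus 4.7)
The plan is to split the argument into two pieces: the upper bounds (following Nagel's commutative-algebra framework) and the tightness claims (using the structure theory of Billera--Lee polytopes and of $k$-stacked spheres).

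\textbf{Upper bounds.} Set $R=\field[x_v:v\in V(\Delta)]$, so that $\field[\Delta]=R/I_\Delta$ is a Gorenstein ring of Krull dimension $d$. I would first choose a generic linear system of parameters $\Theta$ for $\field[\Delta]$ together with one further generic linear form $\omega$, and pass to the Artinian quotient $A=\field[\Delta]/(\Theta,\omega)$; the Gorenstein property ensures that $A$ has Hilbert function equal to $g(\Delta)$ in degrees $0\leq j\leq\lfloor d/2\rfloor$. The link to missing faces is Hochster's formula, which identifies $m_k$ with $\beta_{1,k+1}(\field[\Delta])$---the minimal number of degree-$(k+1)$ generators of $I_\Delta$. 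A genericity argument shows that the $m_k$ monomials $x_F$ (for $F$ a missing $k$-face) remain linearly independent in $A_{k+1}$ modulo the image of the multiplication map $A_1\otimes A_k\to A_{k+1}$. Since Macaulay's theorem bounds that image by $g_k^{\langle k\rangle}$ and $\dim A_{k+1}=g_{k+1}$, we obtain $m_k\leq g_k^{\langle k\rangle}-g_{k+1}$. The companion bound on $m_{d-k}$ comes from Poincar\'e duality inside the Gorenstein Artinian algebra $\field[\Delta]/\Theta$: the duality pairing identifies degree $d-k$ with degree $k$, and Macaulay's theorem ``read backward'' via the operator $m\mapsto m_{\langle k+1\rangle}$ yields the stated inequality. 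In the even case $d=2k$, the middle degree is self-dual under the pairing, and the two contributions combine to give $m_k\leq g_k^{\langle k\rangle}+g_k$.

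\textbf{Tightness.} For a Billera--Lee polytope with prescribed $h$-vector, the compressed shelling of \cite{BilleraLee} allows one to enumerate the restriction set of every facet, and hence the missing faces in each dimension; a direct computation then matches every upper bound. The equivalence $m_{d-k}=g_k\iff\Delta$ is $k$-stacked (for $1\leq k\leq\lfloor d/2\rfloor-1$) is \cite[Corollary 1.4]{MNZ}. For the easier (``if'') direction, if $\Delta=\partial B$ for a $k$-stacked homology $d$-ball $B$, then the minimal interior faces of $B$ all have dimension exactly $d-k$ and biject with the missing $(d-k)$-faces of $\Delta$; the count $g_k$ follows from the Dehn--Sommerville relations for the pair $(B,\partial B)$. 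The addendum $m_{k+1}=g_k$ when $d=2k+1$ and $\Delta$ is $k$-stacked is immediate because $k+1=d-k$ in this case.

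\textbf{Main obstacle.} The converse direction of the $k$-stacked characterization is, in my view, the delicate step: one must promote the numerical equality $m_{d-k}=g_k$ to the combinatorial statement that $\Delta$ bounds a $k$-stacked ball. The approach in \cite{MNZ} introduces the missing faces of dimension $\geq d-k$ as formal ``interior'' faces and verifies, via a careful socle analysis of $\field[\Delta]/\Theta$, that the resulting complex is a homology $d$-ball; this is where essentially all of the technical work resides.
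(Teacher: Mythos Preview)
The paper does not actually prove this theorem. It is stated as a known result, with explicit citations to \cite[Corollary 4.6(a)]{Nagel} for the upper bounds and the Billera--Lee tightness, and to \cite[Corollaries 1.3 and 1.4]{MNZ} for the $k$-stacked characterization. There is therefore no ``paper's own proof'' to compare your sketch against; your proposal is really an attempt to outline the arguments of those two references.

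That said, your sketch of the upper-bound part has a genuine gap. In the Artinian quotient $A=\field[\Delta]/(\Theta,\omega)$ the monomials $x_F$ with $F$ a missing $k$-face are already zero, since $x_F\in I_\Delta$; and in any standard graded algebra the multiplication map $A_1\otimes A_k\to A_{k+1}$ is surjective. So the statement that the $x_F$ ``remain linearly independent in $A_{k+1}$ modulo the image of $A_1\otimes A_k\to A_{k+1}$'' is not meaningful as written. Nagel's actual route is different: one identifies $m_k$ with the graded Betti number $\beta_{1,k+1}(\field[\Delta])$ (this is just the definition of the Stanley--Reisner ideal, not Hochster's formula), and then bounds the graded Betti numbers of $\field[\Delta]$ by those of the lex-segment ideal with the same Hilbert function via the Bigatti--Hulett--Pardue theorem. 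The inequalities $m_k\leq g_k^{\langle k\rangle}-g_{k+1}$ and $m_{d-k}\leq g_k-(g_{k+1})_{\langle k+1\rangle}$ come out of an explicit computation of those lex Betti numbers, not from a direct Macaulay-type argument inside $A$.

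Your account of the tightness claims is closer to the mark: the Billera--Lee polytopes are indeed handled by direct enumeration of minimal generators from the compressed construction, and the $k$-stacked equivalence together with the $d=2k+1$ addendum is exactly the content of \cite[Corollary 1.4]{MNZ}, with the hard ``only if'' direction relying on the socle/interior-face analysis you describe.
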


In several special cases which we will now discuss, Theorem \ref{lem:upper-bounds} leads to a complete characterization of the $m$-vectors. Specifically, for $k$-neighborly $(d-1)$-spheres, the following holds.
	\begin{corollary} \label{cor: m-vectors of k-neighborly spheres}
		Let $\Delta$ be a $k$-neighborly $(d-1)$-sphere with $n\geq d+2$ vertices. Then 
		\begin{enumerate}
			\item $m_1=m_2=\dots =m_{k-1}=m_{d-k+1}=m_{d-k+2}=\dots=m_d=0$.
			\item If $d=2k$, then $m_k=\binom{n-k-1}{k+1}+\binom{n-k-2}{k}$.
			\item If $d=2k+1$, then $m_k=\binom{n-k-2}{k+1}$ and $0\leq m_{k+1}\leq \binom{n-k-3}{k}$.
		\end{enumerate}
		In particular, this gives a complete characterization of the $m$-vectors of neighborly $(2k-1)$-spheres.
	\end{corollary}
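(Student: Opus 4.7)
The plan is to derive all four conclusions from Theorem~\ref{lem:upper-bounds} combined with the well-known formula for the initial portion of the $h$-vector of a $k$-neighborly sphere. The key preliminary calculation, which I would carry out first, is that $k$-neighborliness forces $h_i(\Delta)=\binom{n-d+i-1}{i}$ for $0\leq i\leq k$, and hence $g_j(\Delta)=\binom{n-d+j-2}{j}$ for $1\leq j\leq k$. A short manipulation in Macaulay canonical form then yields the identity $(g_{j+1})_{\langle j+1\rangle}=g_j$ whenever $1\leq j\leq k-1$.

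For part (1), the vanishing $m_1=\cdots=m_{k-1}=0$ is immediate from $k$-neighborliness: no subset of size at most $k$ fails to be a face. For $m_{d-k+1}=\cdots=m_{d-1}=0$, I would apply Theorem~\ref{lem:upper-bounds}(1) with index $j\in\{1,\ldots,k-1\}$, obtaining $m_{d-j}\leq g_j-(g_{j+1})_{\langle j+1\rangle}=0$. The remaining case $m_d=0$ must be handled separately, since Theorem~\ref{lem:upper-bounds}(1) is stated for $j\geq 1$: a missing $d$-face $F$ would make $\partial F\cong \partial\sigma^d$ a $(d-1)$-sphere subcomplex of $\Delta$, and any $(d-1)$-sphere subcomplex of a $(d-1)$-sphere must coincide with the ambient complex (by invariance of domain applied to geometric realizations), forcing $n=d+1$ and contradicting $n\geq d+2$.

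For parts (2) and (3), since $k$-neighborliness makes every $(k+1)$-subset of $V(\Delta)$ either a $k$-face or a missing $k$-face, we have $m_k=\binom{n}{k+1}-f_k$. The full $h$-vector of $\Delta$ is determined by $n$ and $d$ in these cases: the first $k+1$ entries are as above, and Dehn--Sommerville pins down the remaining entries (with $h_{k+1}=h_{k-1}$ when $d=2k$ and $h_{k+1}=h_k$ when $d=2k+1$). Transforming back from $h$ to $f$ and simplifying via Vandermonde-type identities yields the closed forms $m_k=\binom{n-k-1}{k+1}+\binom{n-k-2}{k}$ when $d=2k$ and $m_k=\binom{n-k-2}{k+1}$ when $d=2k+1$. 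Finally, for the upper bound on $m_{k+1}$ in part (3), I would apply Theorem~\ref{lem:upper-bounds}(1) with $j=k$ (which is admissible since $\lceil d/2\rceil-1=k$ when $d=2k+1$); Dehn--Sommerville then gives $g_{k+1}=h_{k+1}-h_k=0$, so the bound collapses to $m_{k+1}\leq g_k=\binom{n-k-3}{k}$, while the lower bound $m_{k+1}\geq 0$ is trivial.

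The main obstacle is purely algebraic: one must verify $(g_{j+1})_{\langle j+1\rangle}=g_j$ carefully in Macaulay canonical form, and reduce the expression $\binom{n}{k+1}-f_k$ to the stated closed forms via a somewhat intricate telescoping of binomial identities. No deeper sphere-theoretic machinery is needed beyond Dehn--Sommerville and Theorem~\ref{lem:upper-bounds}.
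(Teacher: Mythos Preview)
Your proposal is correct and follows essentially the same route as the paper: both reduce to the formula $g_j=\binom{n-d+j-2}{j}$ for $j\leq k$, both compute $m_k=\binom{n}{k+1}-f_k$ via Dehn--Sommerville, and both extract the bound on $m_{k+1}$ from Theorem~\ref{lem:upper-bounds}.

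The one genuine difference is in how you handle the high-dimensional zeros $m_{d-k+1}=\cdots=m_d=0$. The paper dispatches all of these at once by citing Alexander duality: if $F$ were a missing face of dimension $j\geq d-k+1$, then $\Delta[F]\cong\partial\overline{F}$ carries nontrivial $(j-1)$-homology, and Alexander duality in the sphere forces $\tilde{H}_{d-j-1}(\Delta[V\setminus F])\neq 0$; but $k$-neighborliness makes $\Delta[V\setminus F]$ $(k-2)$-connected, a contradiction since $d-j-1\leq k-2$ (and $n\geq d+2$ handles $j=d$). Your approach instead squeezes $m_{d-j}\leq g_j-(g_{j+1})_{\langle j+1\rangle}=0$ out of Theorem~\ref{lem:upper-bounds}(1) for $1\leq j\leq k-1$, and then treats $m_d=0$ separately via the invariance-of-domain argument. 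Both are valid; the Alexander-duality argument is more uniform and avoids the Macaulay-form computation, while your route stays entirely within the algebraic framework of Theorem~\ref{lem:upper-bounds} at the cost of needing an ad hoc step for $m_d$ (which, incidentally, is itself the $j=d$ instance of the Alexander-duality argument).
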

	
\smallskip\noindent {\it Proof (sketch):} A $k$-neighborly $(d-1)$-sphere $\Delta$ with vertex set $V$ has the complete $(k-1)$-skeleton, and hence it satisfies $m_i=0$ for all $i\leq k-1$. If $W$ is a missing $(d-i+1)$-face of $\Delta$ for some $i\leq k$, then $\beta_{d-i}(\Delta[W])=1$; here $\beta_{\ell}$ denotes the dimension of the $\ell$-th reduced simplicial homology. Then by the Alexander duality, $\beta_{i-2}(\Delta[V\backslash W])=1$. However, by $k$-neighborliness of $\Delta$, $\Delta[V\backslash W]$ has the complete $(k-1)$-skeleton, and hence $\Delta[V\backslash W]$ has vanishing $\beta_\ell$ for all $\ell\leq k-2$, including $\ell=i-2$. This contradiction shows that $m_{d-k+1}=m_{d-k+2}=\cdots=m_d=0$, and completes the proof of part 1. To finish the proof, observe that a  $k$-neighborly $(d-1)$-sphere with $n$ vertices has $g_k={n-d+k-2 \choose k}$ and  $m_k={n \choose k+1}-f_k$. Parts 2 and 3 then follow from Theorem \ref{lem:upper-bounds}  and direct computations.\endproof
	
	The $m$-vectors of $k$-stacked $(d-1)$-spheres have the opposite pattern: 
	since for such spheres, $g_i=0$ for all $k+1\leq i\leq d/2$,
	Theorem \ref{lem:upper-bounds} (alternatively, the Generalized Lower Bound Theorem \cite{MuraiNevo2013}) implies that the zero $m$-numbers are concentrated in the middle of the $m$-vector. 
	\begin{corollary}\label{cor: m-vectors of stacked spheres}
		Let $\Delta$ be a $k$-stacked $(d-1)$-sphere with $n\geq d+2$ vertices, where $0\leq k\leq \frac{d}{2}-1$. Then $m_{k+1}=m_{k+2}=\dots = m_{d-k-1}=0$; furthermore, if $0\leq k\leq \frac{d-1}{2}$, then $m_{d-k}=g_{k}$. In particular, 
		\begin{enumerate}
			\item if $d\geq 3$ and $\Delta$ is $1$-stacked, then $m_1=g_1^{\langle 1 \rangle}$, $m_{d-1}=g_1$, and all other $m$-numbers are zeros;
			\item if $k\geq 1$ and $\Delta$  is a $k$-stacked and $k$-neighborly $2k$-sphere, then $m_k=\binom{n-k-2}{k+1}$, $m_{k+1}= \binom{n-k-3}{k}$, and all other $m$-numbers are zeros.
		\end{enumerate}
	\end{corollary}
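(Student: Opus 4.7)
The plan is to derive this corollary directly from Theorem~\ref{lem:upper-bounds} together with the criterion, recorded in Section~2 as part of the Generalized Lower Bound Theorem, that a simplicial $(d-1)$-sphere is $k$-stacked if and only if $g_{k+1}=0$.

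First I would show that under the hypothesis, $g_j(\Delta)=0$ for every $k+1\leq j\leq \lfloor d/2\rfloor$. The GLB criterion gives $g_{k+1}=0$, and the $g$-theorem's inequality $g_{j+1}\leq g_j^{\langle j\rangle}$, combined with $0^{\langle j\rangle}=0$, propagates the vanishing to every larger index up to $\lfloor d/2\rfloor$.

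Next I would feed this vanishing into Theorem~\ref{lem:upper-bounds}. For every $i$ with $k+1\leq i\leq \lceil d/2\rceil-1$, part~(1) gives $m_i\leq g_i^{\langle i\rangle}-g_{i+1}=0$. If $d=2\ell$ is even, part~(2) applied at $i=\ell$ gives $m_\ell\leq g_\ell^{\langle \ell\rangle}+g_\ell=0$. For each $i$ in the upper range $\lceil d/2\rceil<i\leq d-k-1$, setting $j=d-i$ places $j$ in the valid range $\{k+1,\ldots,\lfloor d/2\rfloor-1\}$, and the second inequality of part~(1) yields $m_{d-j}\leq g_j-(g_{j+1})_{\langle j+1\rangle}=0$. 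A short index check confirms that these cases together cover every $i$ with $k+1\leq i\leq d-k-1$. The identity $m_{d-k}=g_k$ is then read off from the tightness portion of Theorem~\ref{lem:upper-bounds}: the iff statement handles $1\leq k\leq \lfloor d/2\rfloor-1$, the ``moreover'' clause handles the remaining case $d=2k+1$, and $k=0$ is vacuous since $n\geq d+2$ excludes the simplex boundary.

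For specialization (1), taking $k=1$ in the main claim yields $m_2=\cdots=m_{d-2}=0$ and $m_{d-1}=g_1$, while $m_d=0$ follows from Corollary~\ref{cor: m-vectors of k-neighborly spheres}, since every complex is trivially $1$-neighborly. For $m_1=g_1^{\langle 1\rangle}$, Theorem~\ref{lem:upper-bounds}(1) already gives $m_1\leq g_1^{\langle 1\rangle}-g_2=g_1^{\langle 1\rangle}$; equality is a direct computation using that $g_j=0$ for $j\geq 2$ together with Dehn--Sommerville force $h_i=h_1=g_1+1=n-d$ for all $1\leq i\leq d-1$, whence $f_1=\binom{d}{2}+d(n-d)$ and $m_1=\binom{n}{2}-f_1=\binom{n-d}{2}=g_1^{\langle 1\rangle}$. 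For specialization (2), set $d=2k+1$: the middle-zeros claim is vacuous because $d-k-1=k$, the main claim gives $m_{k+1}=m_{d-k}=g_k=\binom{n-d+k-2}{k}=\binom{n-k-3}{k}$ (using $k$-neighborliness), and Corollary~\ref{cor: m-vectors of k-neighborly spheres} supplies the outer zeros $m_i=0$ for $i\leq k-1$ and $i\geq k+2$, together with $m_k=\binom{n-k-2}{k+1}$. No step is conceptually difficult; the principal work is the index bookkeeping needed to verify that Theorem~\ref{lem:upper-bounds} applies at every intermediate position, and the only small hands-on calculation is the evaluation of $m_1$ in case~(1).
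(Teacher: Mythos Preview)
Your approach is essentially the paper's: the paper states that the vanishing of the middle $m$-numbers follows from $g_i=0$ for $k+1\leq i\leq d/2$ together with Theorem~\ref{lem:upper-bounds}, and then only spells out the two explicit computations $m_1=g_1^{\langle 1\rangle}$ (via $f_1=nd-\binom{d+1}{2}$, which matches your $\binom{d}{2}+d(n-d)$) and $m_{k+1}=g_k=\binom{n-k-3}{k}$. Your write-up supplies the index bookkeeping the paper omits.

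One small gap in that bookkeeping: when $d$ is odd, your three cases miss $i=\lceil d/2\rceil=(d+1)/2$. Your first case stops at $\lceil d/2\rceil-1$, the even-$d$ case does not apply, and your upper-range case starts strictly above $\lceil d/2\rceil$. The fix is immediate: for $i=(d+1)/2$ set $j=d-i=(d-1)/2=\lceil d/2\rceil-1$, which is still in the valid range for Theorem~\ref{lem:upper-bounds}(1), and since $j\geq k+1$ (as $k\leq (d-3)/2$) one has $g_j=0$ and $g_{j+1}=g_{(d+1)/2}=0$, so $m_i=0$. Equivalently, for odd $d$ the upper-range argument should allow $j$ up to $\lfloor d/2\rfloor$ rather than $\lfloor d/2\rfloor-1$.
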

	\proof For any simplicial complex of dimension $d-1$ with $n$ vertices, $m_1=\binom{n}{2}-f_1$. If $\Delta$ is a stacked sphere, then $f_1=nd-\binom{d+1}{2}$, and so $m_1=g_1^{\langle 1 \rangle}$. Similarly, if $\Delta$ is a $k$-neighborly $2k$-sphere with $n$ vertices, then $g_k=\binom{n-k-3}{k}$. Therefore, if $\Delta$ is also $k$-stacked, then $m_{k+1}=g_k=\binom{n-k-3}{k}$. The other parts of the statement are immediate consequences of Theorem \ref{lem:upper-bounds}.
	\endproof
	
	Using Corollary \ref{cor: m-vectors of k-neighborly spheres}, we can provide a characterization of $m$-vectors of simplicial $2$-spheres, thus giving an answer to Problem \ref{Problem:Kalai} in the first non-trivial case of $d=3$. Recall that when $d=3$, any simplicial $2$-sphere is realizable as the boundary complex of a $3$-polytope.

	\begin{corollary} \label{cor:characterization of m for 2-spheres}
		An integer vector $m=(m_1, m_2, m_3)$ is the $m$-vector of a simplicial $2$-sphere with $n\geq 5$ vertices if and only if $m_1=g_1^{\langle 1\rangle}$, $0\leq m_2\leq g_1-2$ or $m_2=g_1$, and $m_3=0$.
	\end{corollary}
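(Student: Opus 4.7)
My plan is to first fix $m_1$ and $m_3$ using standard identities, then to derive the bound $m_2\leq g_1$ from Theorem~\ref{lem:upper-bounds}, then to prove the gap $m_2\neq g_1-1$ by a connected-sum induction, and finally to exhibit $2$-spheres realizing every allowed value of $m_2$. The main obstacle is the gap itself: the $g$-theorem and Theorem~\ref{lem:upper-bounds} both allow $m_2=g_1-1$, so we need a genuinely structural argument to rule out this one intermediate value.

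For any simplicial $2$-sphere on $n\geq 5$ vertices, the Dehn--Sommerville relations (equivalently, Euler's formula) force $f_1=3n-6$, and since $g_1=n-4$ we immediately get $m_1=\binom{n}{2}-(3n-6)=\binom{n-3}{2}=g_1^{\langle 1\rangle}$. The assumption $n\geq 5$ together with Corollary~\ref{cor: m-vectors of k-neighborly spheres} yields $m_3=0$. Theorem~\ref{lem:upper-bounds} applied with $k=1$ (using $g_2=0$) gives $m_2\leq g_1$, with equality iff $\Delta$ is $1$-stacked.

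For the gap, the key observation is that any missing $2$-face $T=\{u,v,w\}$ of a simplicial $2$-sphere $\Delta$ is a $3$-cycle in the graph of $\Delta$, so by the Jordan--Schoenflies theorem it separates $|\Delta|$ into two topological disks; closing each side off with $T$ produces simplicial $2$-spheres $\Delta_1,\Delta_2$ with $V(\Delta_1)\cap V(\Delta_2)=T$ and $\Delta=\Delta_1\#_T\Delta_2$. Since the two pieces share only the vertices of $T$, any $3$-subset other than $T$ is a missing $2$-face of $\Delta$ iff it is a missing $2$-face of $\Delta_1$ or of $\Delta_2$, while $T$ itself is a face of each $\Delta_i$ but is missing in $\Delta$; hence
\[
n(\Delta)=n(\Delta_1)+n(\Delta_2)-3\quad\text{and}\quad m_2(\Delta)=m_2(\Delta_1)+m_2(\Delta_2)+1.
\]
Now induct on $n$. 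The only $2$-sphere with $n=5$ is the bipyramid over a triangle, which has $m_2=1=g_1$, so the base case holds (and the degenerate case $n=4$ satisfies $m_2=0=n-4$). For $n\geq 6$, if $m_2(\Delta)=0$ the conclusion $m_2\leq g_1-2=n-6$ is immediate; otherwise split $\Delta$ along a missing $2$-face and apply the inductive hypothesis to $\Delta_1,\Delta_2$. If both are stacked, the identities above give $m_2(\Delta)=n-4=g_1$ (and $\Delta$ is itself stacked). If at least one, say $\Delta_1$, is not stacked, then $m_2(\Delta)\leq(n_1-6)+(n_2-4)+1=n-6=g_1-2$. This rules out $m_2=g_1-1$.

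For the realization direction, $m_2=g_1$ is attained by any stacked $2$-sphere on $n$ vertices. For $0\leq m\leq g_1-2$ (so $n-m\geq 6$), one starts with a flag $2$-sphere on $n-m$ vertices --- for instance, the bipyramid over an $(n-m-2)$-gon, which is flag because in a cycle of length $\geq 4$ no three vertices are pairwise adjacent --- and performs $m$ successive stellar subdivisions of triangles. Each subdivision adds one vertex and turns the subdivided triangle into a missing $2$-face (its edges survive, but the triangle itself is no longer a face), so after $m$ steps we obtain a $2$-sphere with $n$ vertices and exactly $m$ missing $2$-faces.
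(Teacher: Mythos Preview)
Your proof is correct and follows essentially the same strategy as the paper. The necessity of $m_1=g_1^{\langle 1\rangle}$, $m_3=0$, and $m_2\leq g_1$, as well as the realization construction via bipyramids plus stacking, are identical to the paper's.

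The one place where your argument is organized differently is the gap $m_2\neq g_1-1$. The paper invokes Steinitz' theorem to realize $\Delta$ as a $3$-polytope, cuts along the affine planes spanned by all $n-5$ missing triangles simultaneously, and then counts vertices of the resulting pieces to force one of them to be a bipyramid (which carries an extra missing triangle, a contradiction). You instead cut along a single missing triangle using Jordan--Schoenflies, derive the additive identities $n=n_1+n_2-3$ and $m_2=m_2(\Delta_1)+m_2(\Delta_2)+1$, and induct. The two arguments are really the same decomposition viewed globally versus recursively; your version has the minor advantage of staying purely combinatorial/topological and not appealing to polytopality, while the paper's version makes the vertex-count bookkeeping a one-line computation.
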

	\begin{proof}
		That $m_1=g_1^{\langle 1\rangle}$, $m_2\leq g_1$, and $m_3=0$ follows from the case $k=1$, $d=3$ of Corollary \ref{cor: m-vectors of k-neighborly spheres}. 
		
		A stacked $3$-polytope with $n$ vertices has $m_2=g_1$. To construct a simplicial $3$-polytope with $n$ vertices and $\ell$ missing $2$-faces for any $0\leq \ell\leq g_1-2$, consider the bipyramid over an $(n-\ell-2)$-gon. (Since $n-\ell-2\geq 4$, this bipyramid has no missing $2$-faces.) Now iteratively stack shallow pyramids on facets until a $3$-polytope with $n$ vertices is obtained. This requires $\ell$ stacking operations. The resulting polytope then has $\ell$ missing $2$-faces. 
		
		Finally, assume there exists a simplicial $3$-polytope $P$ with $n$ vertices and $g_1-1=n-5$ missing $2$-faces. Cutting $P$ along the $n-5$ planes affinely spanned by these missing $2$-faces, decomposes $P$ into $n-4$ simplicial $3$-polytopes with the total number of $n+3(n-5)=4(n-4)+1$ vertices. Hence $n-5$ of these polytopes must be simplices and the remaining polytope must have $5$ vertices, and so it must be the bipyramid over a triangle. Thus, the non-simplex polytope has a missing $2$-face, contradicting our assumption that $P$ had only $n-5$ missing $2$-faces.
	\end{proof}

	\section{Testing polytopality of neighborly spheres}
	The goal of this section is to show that $m$-numbers can be helpful for proving non-polytopality of some neighborly spheres.
	
	One of the most powerful methods that allows to construct a large number of neighborly polytopes and spheres is {\em sewing}.
	The idea is originally due to Shemer \cite{Shemer}. Let $d\geq 4$, let $\Delta$ be a neighborly $(d-1)$-sphere on the vertex set $[n]:=\{1, 2, \dots, n\}$, and let $B\subset \Delta$ be a $(\lfloor d/2\rfloor-1)$-neighborly and $(\lfloor d/2 \rfloor -1)$-stacked $(d-1)$-ball with $V(B)=[n]$. Then replacing $B$ in $\Delta$ with $\partial B *(n+1)$ results in a neighborly sphere with vertex set $[n+1]$. This operation is called an operation of sewing a new vertex onto $B$. Not all neighborly spheres are obtained by sewing (see Example \ref{ex:non-polytopality} below). However, as we will now show,  all {\em polytopal} neighborly spheres of odd dimension are obtained this way.
	
	When talking about subcomplexes, the following terminology will be useful. Assume $k\geq 1$. We say that a simplicial complex  $B$ is $k$-neighborly w.r.t.~a set $W$ if $B$ is $k$-neighborly and $V(B)=W$. We start with the following lemma. Parts of it were known before: the $d=4$ case is due to Perles (unpublished), while the case of any even $d$ is due to Bagchi and Datta \cite{BD2}. 
	
	\begin{lemma}\label{lm: stacked-links}
		Let $d\geq 4$ and let $P$ be a neighborly $d$-polytope with $f_0(P)\geq d+2$. Then for every vertex $v$ of $P$, the link of $v$ in $\partial P$ is  $(\lfloor d/2\rfloor -1)$-neighborly w.r.t.~$V(P)\backslash v$ and $(\lceil d/2\rceil -1)$-stacked.
	\end{lemma}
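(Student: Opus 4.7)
The $(\lfloor d/2\rfloor - 1)$-neighborliness of $L:=\lk(v,\partial P)$ with respect to $V(P)\setminus\{v\}$ follows directly from $P$'s neighborliness. Since $d\geq 4$ forces $\lfloor d/2\rfloor \geq 2$, $P$ is at least $2$-neighborly, so every pair $\{u,v\}$ with $u\neq v$ is an edge of $\partial P$; this gives $V(L)=V(P)\setminus\{v\}$. For any $(\lfloor d/2\rfloor-1)$-subset $F\subseteq V(P)\setminus\{v\}$, the $\lfloor d/2\rfloor$-set $F\cup\{v\}$ is a face of $\partial P$ by neighborliness, so $F\in L$. This handles the first conclusion.

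For the $(\lceil d/2\rceil-1)$-stackedness, the plan is to exhibit an explicit $(\lceil d/2\rceil-1)$-stacked $(d-1)$-ball $B$ with $\partial B=L$. A natural candidate arises from deleting $v$: set $P'=\conv(V(P)\setminus\{v\})$ and let $B$ be the subcomplex of $\partial P'$ generated by the facets of $P'$ that are not facets of $\partial P$. Geometrically these are the facets of $P'$ visible from $v$, and $B$ is a $(d-1)$-ball with $\partial B=L$. A face $F\in B$ is interior in $B$ if and only if $F\cup\{v\}\notin\partial P$, and by $\lfloor d/2\rfloor$-neighborliness every $F\cup\{v\}$ of size at most $\lfloor d/2\rfloor$ is automatically a face of $\partial P$. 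This immediately rules out interior faces of $B$ of dimension at most $\lfloor d/2\rfloor-2$.

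The crux, and main obstacle, is excluding interior faces of $B$ of dimension exactly $\lfloor d/2\rfloor-1$. Equivalently, one must show that every $\lfloor d/2\rfloor$-subset $F\subseteq V(P)\setminus\{v\}$ contained in some new facet of $P'$ satisfies $F\cup\{v\}\in\partial P$; put differently, a missing $\lfloor d/2\rfloor$-face of $\partial P$ containing $v$ cannot have its complementary part sitting in any new facet of $P'$. For even $d$, this was established by Perles (unpublished) and Bagchi--Datta \cite{BD2} through a Gale-diagram analysis that exploits the rigidity imposed by $\lfloor d/2\rfloor$-neighborliness. For odd $d$, I would adapt the same framework: the Gale dual of $P$ should force any missing $\lfloor d/2\rfloor$-face containing $v$ into a configuration incompatible with the new facets of $P'$. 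A technical complication is that $P'$ may fail to be simplicial; in that case one must triangulate its non-simplicial facets using only existing vertices in a way that preserves the bound on interior-face dimensions. This Gale-theoretic analysis, together with the careful choice of triangulation, is the principal novelty for odd $d$.
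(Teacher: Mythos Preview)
Your setup is the same as the paper's: form $P'=\conv(V(P)\setminus\{v\})$ and let $B$ be the complex of facets of $P'$ not in $\partial P$, so that $\partial B=\lk(v,\partial P)$. But you then take an unnecessarily weak route. Using the criterion ``$F\in B$ is interior iff $F\cup\{v\}\notin\partial P$'' only lets neighborliness dispose of interior faces of dimension $\le \lfloor d/2\rfloor-2$, after which you declare dimension $\lfloor d/2\rfloor-1$ to be the ``crux'' and appeal to a Gale-diagram argument. That detour is not needed, and the odd-$d$ extension you sketch is left vague.

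The paper's observation is simpler and handles all dimensions $\le \lfloor d/2\rfloor-1$ at once. Note that $\partial P'$ decomposes as $B$ together with the antistar of $v$ in $\partial P$, and these two $(d-1)$-balls intersect exactly in $\partial B=\lk(v)$. Now take any $F\subseteq V(P)\setminus\{v\}$ with $|F|\le \lfloor d/2\rfloor$. By neighborliness, $F$ itself (not $F\cup\{v\}$) is a face of $\partial P$, hence a face of the antistar. Either $F$ lies in $\lk(v)$, in which case it is a boundary face of $B$, or $F$ is an interior face of the antistar, in which case $F\notin B$ at all. In neither case is $F$ an interior face of $B$. This already shows $B$ is $(\lceil d/2\rceil-1)$-stacked, uniformly in $d$, with no Gale diagrams.

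One more point: rather than triangulating possibly non-simplicial facets of $P'$, the paper simply perturbs the vertices of $P$ into general position at the outset, which makes $P'$ simplicial and avoids the technicality entirely.
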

	\begin{proof}
		By slightly perturbing the vertices of $P$, we can assume that they have generic coordinates. Let $P'$ be the convex hull of all vertices of $P$ except $v$. Then $P'$ is a simplicial $d$-polytope and the complex generated by the facets of $P'$ that are {\em not} facets of $P$ provides a triangulation $T$ of $\lk(v, \partial P)$. Since $P$ is neighborly, every set of at most $ \lfloor d/2\rfloor$ vertices of $P'$ forms a face of $P$. Such face is either a face of the link of $v$ or an interior face of the antistar of $v$.\footnote{The antistar of $v$ is the subcomplex consisting of faces that do not contain $v$.} In either case, it is not an interior face of $T$. Consequently, $T$ has
		no interior faces of dimension $\leq \lfloor d/2 \rfloor-1$. In addition, $\lk(v, \partial P)$ must be $(\lfloor d/2\rfloor -1)$-neighborly because $P$ is $\lfloor d/2\rfloor$-neighborly. Hence $\lk(v, \partial P)$ is both $(\lfloor d/2\rfloor-1)$-neighborly and $(\lceil d/2 \rceil-1)$-stacked.
	\end{proof}
	
	\begin{corollary}
		Let $k\geq 2$ and let $P$ be a neighborly $2k$-polytope. Then $\partial P$ is obtained from the boundary complex of a $2k$-simplex by recursively sewing onto $(k-1)$-neighborly $(k-1)$-stacked balls.
	\end{corollary}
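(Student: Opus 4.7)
The plan is to induct on $n := f_0(P)$. The base case $n = 2k+1$ is trivial: $P$ must be the $2k$-simplex and there is nothing to sew. For the inductive step with $n \geq 2k+2$, I would pick any vertex $v$ of $P$ and exhibit a neighborly $2k$-polytope $P'$ on $n-1$ vertices together with a $(k-1)$-neighborly, $(k-1)$-stacked $(2k-1)$-ball $T \subset \partial P'$ with $V(T) = V(P')$, such that $\partial P$ is precisely the sphere obtained from $\partial P'$ by sewing the vertex $v$ onto $T$. Appending this last sewing to a recursive sewing sequence for $\partial P'$, which exists by the inductive hypothesis, then expresses $\partial P$ as a recursive sewing starting from $\partial \sigma^{2k}$.

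To construct $T$, I would first pass to a realization of $P$ with vertices in general position, preserving the combinatorial type of $\partial P$ while guaranteeing that every vertex-deletion convex hull is simplicial. Setting $P' := \conv(V(P) \setminus v)$ and proceeding exactly as in the proof of Lemma \ref{lm: stacked-links}, the subcomplex $T \subset \partial P'$ generated by those facets of $P'$ that are not facets of $P$ is a $(2k-1)$-ball with $V(T) = V(P) \setminus v$ and $\partial T = \lk(v, \partial P)$, and it has no interior faces of dimension $\leq k-1$. The latter condition is the definition of $(k-1)$-stackedness. For the $(k-1)$-neighborliness of $T$, observe that since $T$ has no interior $(k-2)$-faces, every $(k-2)$-face of $T$ lies on $\partial T = \lk(v, \partial P)$; and by Lemma \ref{lm: stacked-links}, the link $\lk(v, \partial P)$ contains every $(k-1)$-subset of $V(P) \setminus v$. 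Hence $T$ is of exactly the type required by the sewing construction.

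It remains to identify $\partial P'$ as a neighborly polytopal $(2k-1)$-sphere so that induction applies. Any $k$-subset $S \subseteq V(P) \setminus v$ is a face of $P$ by $k$-neighborliness; since $v \notin S$, any supporting hyperplane of $S$ in $\partial P$ also supports $P' \subseteq P$ along $S$, so $S$ is a face of $\partial P'$. Thus $\partial P'$ is a neighborly $2k$-polytope on $n-1$ vertices. By construction, $\partial P$ is obtained from $\partial P'$ by removing the interior of $T$ and gluing in $\partial T * v = \lk(v, \partial P) * v$, which is precisely the sewing of $v$ onto $T$, and the induction closes.

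The only non-mechanical step is the genericity reduction at the start: choosing a realization of $P$ for which the combinatorial type of $\partial P$ is preserved and every vertex-deletion convex hull is simplicial. This is a standard perturbation argument but deserves explicit acknowledgement, since the construction of $T$ requires $P'$ to be simplicial. Once it is in place, the entire proof reduces to a direct reading of Lemma \ref{lm: stacked-links} together with the definition of sewing.
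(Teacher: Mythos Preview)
Your proof is correct and follows essentially the same approach as the paper's: induct on $f_0(P)$, perturb to generic position, let $P'=\conv(V(P)\setminus v)$, and take $T$ to be the complex of facets of $P'$ not in $P$, exactly as in the proof of Lemma~\ref{lm: stacked-links}. You spell out a few details the paper leaves implicit (why $P'$ is neighborly, why $V(T)=V(P')$, and why the replacement $T\leadsto \partial T * v$ recovers $\partial P$), but the argument is the same.
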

	\begin{proof}
		If $f_0(P)=2k+1$, then $P$ is a simplex, and the result holds. Otherwise, using the notation of the proof of Lemma \ref{lm: stacked-links}, $\partial P$ is obtained from $\partial P'$ by sewing vertex $v$ onto $T$. By the proof of Lemma \ref{lm: stacked-links}, $T$ is a $(k-1)$-neighborly $(k-1)$-stacked ball, and $P'$ is a neighborly $2k$-polytope with $f_0(P')=f_0(P)-1$. The statement then follows by induction on the number of vertices.
	\end{proof}
	
	Lemma \ref{lm: stacked-links}, together with part 2 of Corollary \ref{cor: m-vectors of stacked spheres}, provides a particularly simple numerical condition that any odd-dimensional neighborly polytopal sphere must satisfy. 
	This leads to:
	\begin{theorem} \label{non-polytopal}
		Let $k\geq 2$ and let $\Delta$ be a neighborly $(2k-1)$-sphere with $n\geq 2k+2$ vertices. If $\Delta$ has a vertex $v$ such that $m_k(\lk(v))\neq \binom{n-k-3}{k-1}$, then $\Delta$ is not polytopal.
	\end{theorem}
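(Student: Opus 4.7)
The plan is a short contrapositive argument that directly combines Lemma \ref{lm: stacked-links} with part (2) of Corollary \ref{cor: m-vectors of stacked spheres}. I will assume $\Delta$ is polytopal, write $\Delta = \partial P$ for some simplicial $2k$-polytope $P$, and show that then every vertex link of $\Delta$ must satisfy $m_k(\lk(v)) = \binom{n-k-3}{k-1}$; the theorem is the contrapositive.

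Since $\Delta$ is neighborly and $f_0(P) = n \geq 2k+2 = d+2$ with $d = 2k \geq 4$, Lemma \ref{lm: stacked-links} applies to $P$. It tells us that for every vertex $v$ of $P$, the link $L := \lk(v,\partial P)$ is a $(2k-2)$-sphere that is simultaneously $(k-1)$-neighborly with respect to $V(\Delta)\setminus\{v\}$ (in particular, $f_0(L) = n-1$) and $(k-1)$-stacked, since for $d=2k$ both $\lfloor d/2\rfloor - 1$ and $\lceil d/2\rceil - 1$ equal $k-1$.

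I then apply Corollary \ref{cor: m-vectors of stacked spheres}(2) to $L$, using the statement of the corollary with its parameter $k$ replaced by $k-1$. This substitution is legitimate because $k \geq 2$ ensures $k-1 \geq 1$, and because $L$ is indeed a $(k-1)$-stacked and $(k-1)$-neighborly $2(k-1)$-sphere with $n-1$ vertices. The corollary then yields
\[
m_k(L) \;=\; \binom{(n-1) - (k-1) - 3}{k-1} \;=\; \binom{n-k-3}{k-1},
\]
which is exactly the equality the hypothesis of the theorem assumes to fail at some vertex $v$. This contradiction completes the argument.

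There is essentially no technical obstacle beyond careful bookkeeping of the index shifts: the vertex link has dimension $2k-2$ rather than $2k-1$, and $n-1$ rather than $n$ vertices, so one has to apply the stacked-sphere formula at the shifted parameters. All of the substantive content has already been extracted in Lemma \ref{lm: stacked-links} and Corollary \ref{cor: m-vectors of stacked spheres}(2), so the proof reduces to a one-line verification that the two results fit together.
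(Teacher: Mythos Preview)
Your proof is correct and follows exactly the route the paper indicates: it combines Lemma \ref{lm: stacked-links} (applied with $d=2k$, so both $\lfloor d/2\rfloor-1$ and $\lceil d/2\rceil-1$ equal $k-1$) with Corollary \ref{cor: m-vectors of stacked spheres}(2) (applied with parameter $k-1$ and $n-1$ vertices). The index bookkeeping is handled correctly, including the check that $k-1\geq 1$ and that the link has $n-1\geq 2(k-1)+3$ vertices.
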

	
	\begin{example} 
		Any $3$-sphere with $7$ vertices is polytopal. The Gr\"unbaum--Sreedharan $3$-sphere $\mathrm{GS}_8$ from \cite{GruSree} is the only neighborly $3$-sphere with $8$ vertices that is not polytopal. That it is not polytopal is an immediate consequence of Theorem \ref{non-polytopal}. Indeed, the facets of $\mathrm{GS}_8$ are recorded in the following list:
		\begin{eqnarray*}\{1,2,3,4\},\{1,2,3,5\},
			\{1,2,4,5\},\{1,3,4,6\},\{1,3,5,6\},\{1,4,5,7\},\{1,4,6,7\}, \\ 
			\{1,5,6,8\},\{1,5,7,8\},\{1,6,7,8\},
			\{2,3,4,8\},\{2,3,5,6\},\{2,3,6,7\},\{2,3,7,8\},\\
			\{2,4,5,8\},\{2,5,6,8\},\{2,6,7,8\},\{3,4,6,7\},
			\{3,4,7,8\},\{4,5,7,8\}.
		\end{eqnarray*}
		In particular, two vertex links, namely, $\lk(4)$ and $\lk(6)$,  have $m_2=1$ instead of $\binom{8-2-3}{1}=3$, confirming that $\mathrm{GS}_8$ is not polytopal. (In fact, these two links are isomorphic to the connected sum of an octahedral sphere and the boundary complex of a $3$-simplex.) 
	\end{example}

	\begin{remark}
		Theorem \ref{non-polytopal} can be extended to the case that $\Delta$ is a simplicial $(2k-1)$-sphere with $n\geq 2k+2$ vertices and $f_{k-1}=\binom{n}{k}-1$, i.e., $\Delta$ is $(k-1)$-neighborly and it has exactly one missing $(k-1)$-face $F$. 
		Using the same proof as in Lemma \ref{lm: stacked-links}, we can show that if $\Delta$ is polytopal, then for any vertex $v\in F$, the link of $v$ must be $(k-1)$-stacked,  and consequently, $m_{k}(\lk(v))=g_{k-1}(\lk(v))=\binom{n-k-3}{k-1}-1$. For an application of this observation, consider the Barnette sphere \cite{Barnette-sphere} --- the only non-neighborly simplicial $3$-sphere with $8$ vertices that is not polytopal. This sphere has a single missing edge $e$, and the link of any of the endpoints of $e$ satisfies $m_2=0$ instead of $\binom{8-2-3}{1}-1=2$, confirming that the sphere is not polytopal. (The two links are octahedral spheres.) 
	\end{remark}
	
	\begin{example} \label{ex:non-polytopality}
		Using Theorem \ref{non-polytopal}, one can check that the following vertex-transitive  neighborly $3$- and $5$-spheres from Frank Lutz's Manifold page \cite{manifold_page} are not polytopal.  (The first two numbers indicate the dimension and the number of vertices, respectively; for example, $3\_10\_1\_1$ is a $10$-vertex triangulation of the $3$-sphere. Since the complexes are vertex-transitive, the values of $m_2(\lk(v)$ and $m_3(\lk(v)$ are independent of the choice of vertex $v$.)
		
		\begin{tabular}{|c|c|c|c|c|c|}
			\hline
			Manifold & $3\_10\_1\_1$ & $3\_11\_1\_1$ & $3\_13\_1\_3$ & $3\_13\_1\_5$ & $3\_14\_1\_7$ \\
			\hline	
			$m_2(\lk(v))$ & 3 & 3 & 2 & 3 & 5\\
			\hline
			Manifold & $3\_14\_1\_8$ & $3\_14\_1\_11$ & $3\_14\_1\_14$ & $3\_14\_1\_17$ & $3\_14\_1\_18$ \\
			\hline	
			$m_2(\lk(v))$  & 7 & 4 & 7 & 6 & 7 \\
			\hline
			Manifold &  $3\_14\_1\_26$ &  $3\_14\_1\_27$ & $3\_15\_1\_3$ & $3\_15\_1\_13$ &   \\
			\hline	
			$m_2(\lk(v))$ & 7 & 5 & 6 & 5 &  \\
			\hline
			Manifold & $5\_11\_1\_1$ & $5\_13\_2\_6$ & $5\_13\_1\_8$ & $5\_15\_2\_7$ &  \\
			\hline	
			$m_3(\lk(v))$ & 8 & 15 & 11 & 24 & \\
			\hline
		\end{tabular}
	\end{example}
	
	One key ingredient of the proof of Theorem \ref{non-polytopal} is that any $(k-1)$-neighborly $(k-1)$-stacked $(2k-2)$-sphere with $n-1$ vertices has $m_k=\binom{n-k-3}{k-1}$; see part 2 of Corollary \ref{cor: m-vectors of stacked spheres}. While no similar results are known for $(k-1)$-neighborly $k$-stacked $(2k-1)$-spheres, 
	any $\ell$-stacked $(d-1)$-sphere must have a missing face of dimension  $\geq d-\ell$. (This follows from the definition of $\ell$-stackedness.) An immediate consequence of this observation and Lemma \ref{lm: stacked-links} is the following relative of Theorem~\ref{non-polytopal}.

	\begin{corollary}
		Let $k\geq 2$, $d\in\{2k,2k+1\}$, and let $\Delta$ be a neighborly $(d-1)$-sphere. If there is a vertex $v$ of $\Delta$ such that all missing faces of $\lk(v)$ have dimension $k-1$, then $\Delta$ is not polytopal.
	\end{corollary}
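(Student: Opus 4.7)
The plan is a short proof by contradiction that leverages Lemma~\ref{lm: stacked-links} together with the observation recorded just above the corollary: any $\ell$-stacked $(d'-1)$-sphere must contain a missing face of dimension at least $d'-\ell$. (A minimal interior face of the associated $\ell$-stacked ball $B$ has dimension $\geq d'-\ell$ by definition of $\ell$-stackedness, and by minimality all its proper subsets lie in $\partial B$ while $F$ itself does not, so $F$ is a missing face of $\partial B$.)

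Suppose toward a contradiction that $\Delta=\partial P$ for some simplicial $d$-polytope $P$. First, I would verify that $P$ cannot be the $d$-simplex: in that case $\lk(v)=\partial\sigma^{d-1}$ has exactly one missing face, of dimension $d-1$, and $d-1=k-1$ forces $d=k$, contradicting $d\in\{2k,2k+1\}$ with $k\geq 2$. Hence $f_0(P)\geq d+2$, and Lemma~\ref{lm: stacked-links} applies: the link $\lk(v,\partial P)$ is $(\lceil d/2\rceil-1)$-stacked. Setting $\ell:=\lceil d/2\rceil-1$, this gives $\ell=k-1$ when $d=2k$ and $\ell=k$ when $d=2k+1$.

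Now I apply the observation to $\lk(v)$, viewed as a $(d'-1)$-sphere with $d'=d-1$: it must contain a missing face of dimension at least $(d-1)-\ell$. A direct computation yields $(d-1)-\ell=k$ in both parities, since the ceiling in $\lceil d/2\rceil$ exactly absorbs the parity of $d$. This missing face has dimension $\geq k>k-1$, contradicting the hypothesis that every missing face of $\lk(v)$ has dimension $k-1$. Hence $\Delta$ is not polytopal. I do not anticipate any genuine obstacle; the entire argument is a two-line consequence of Lemma~\ref{lm: stacked-links} and the minimal-interior-face principle, with the only thing worth checking being the arithmetic coincidence that merges the two cases into the single bound $(d-1)-\lceil d/2\rceil+1=k$.
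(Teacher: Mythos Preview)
Your proof is correct and follows essentially the same approach as the paper: combine Lemma~\ref{lm: stacked-links} with the observation that an $\ell$-stacked sphere has a missing face of dimension at least $d'-\ell$, then check the arithmetic $(d-1)-(\lceil d/2\rceil-1)=\lfloor d/2\rfloor=k$. Your explicit handling of the simplex case is a nice bit of care that the paper leaves implicit (the hypothesis on $\lk(v)$ already rules it out, exactly as you argue), but otherwise the arguments coincide.
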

	
	\noindent For instance, a flag $4$-polytope ($3$-polytope, resp.) is not a vertex figure of any neighborly $5$-polytope ($4$-polytope, resp.) This discussion motivates the following questions.
	\begin{question}  \label{question:m_{k+1}=0}
		For $k\geq 2$ and $d\in \{2k+1, 2k+2\}$, are there $k$-neighborly $d$-polytopes, with arbitrarily many vertices, all of whose missing faces have dimension $k$? 
	\end{question}
	In Section 7 we will prove that for every {\em odd} $k$ and $n\geq 2k+4$ as well as for $k=2$ and $n\geq 9$, there exists a neighborly $(2k+1)$-polytope with $n$ vertices all of whose missing faces have dimension $k$. 
	The question remains open in all other cases. The complexes $7\_12\_193\_1$ and $7\_13\_1\_1$ from \cite{manifold_page} are $3$-neighborly $7$-spheres all of whose missing faces have dimension $3$. We do not know whether they are polytopal or not.

	\begin{question} \label{quest:missing-faces-dim-k}
		For $k\geq 1$, are there neighborly $(2k+1)$- and $(2k+2)$-spheres all of whose vertex links have missing faces only in dimension $k$?  If so, can we find such $(2k+1)$-and $(2k+2)$-spheres with  arbitrarily many vertices? 
	\end{question}
	
	The following examples of neighborly manifolds from \cite{manifold_page} suggest that the answers to Question \ref{quest:missing-faces-dim-k} might be positive. The complex $3\_15\_11\_1$ is a $15$-vertex neighborly triangulation of the $3$-torus all of whose vertex links are flag $2$-spheres.
	Similarly, the complex $5\_13\_3\_2$ is a $13$-vertex $3$-neighborly triangulation of  $\mathrm{SU}(3)/\mathrm{SO}(3)$. All vertex links of this complex are $2$-neighborly $4$-spheres  and all missing faces of these links have dimension $2$. 
	Both triangulations are vertex-transitive.

	\section{Lower bounds on the $m$-numbers}
	This section is devoted to establishing lower bounds on the $m$-numbers. First, we provide an extension of Goodman's bound to all simplicial complexes. Then we discuss applications of this bound to nearly neighborly spheres.
	
	\subsection{Extending Goodman's bound}
	For $s\geq 3$, denote by $G_s(n, f_1)$ the minimum number of $s$-cliques that a graph with $n$ vertices and $f_1$ edges can have. Expressing the value  $G_s(n, f_1)$ in terms of $n$ and $f_1$ is known  in the extremal combinatorics as the clique density problem. For spectacular recent developments on the {\em tight} lower bound on $G_s(n, f_1)$, 
	see \cite{LPS, Nikoforov,Razborov,Reiher}. In the case of $s=3$, the following theorem provides a simple convex lower bound on $G_3(n, f_1)$; it is known as Goodman's bound \cite{Goodman, MoonMoser,NordhausStewart}. Denote by $T_r(n)$ the Tur\'an graph, i.e., the complete $r$-partite graph with $n$ vertices partitioned into $r$ parts of sizes as equal as possible; also denote by $t_r(n)$ the number of edges of $T_r(n)$.
	\begin{theorem}  \label{thm:Goodman-bound}
		$G_3(n, f_1)\geq \frac{f_1(4f_1-n^2)}{3n}$. The equality holds if and only if $f_1=t_r(n)$ for some $r$ that divides $n$, with the graph $T_r(n)$ attaining the bound.
	\end{theorem}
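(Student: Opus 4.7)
The plan is to combine a double-counting of vertex triples with a single application of Cauchy--Schwarz. Let $G$ be a graph on $n$ vertices with $f_1$ edges and $T$ triangles. For $i\in\{0,1,2,3\}$, let $N_i$ be the number of $3$-subsets of vertices spanning exactly $i$ edges of $G$, so that $T+N_2+N_1+N_0=\binom{n}{3}$.

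First I would double count ordered pairs $(e,\bar e)$, where $e$ is an edge of $G$, $\bar e$ is a non-edge of $G$, and $|e\cap\bar e|=1$. Counting such pairs by the common vertex $v$ gives $\sum_v d_v(n-1-d_v)$, where $d_v$ denotes the degree of $v$. Counting them instead by the $3$-subset $e\cup\bar e$ is a short case check: a triple spanning $0$, $1$, $2$, or $3$ edges contributes $0$, $2$, $2$, $0$ such pairs, respectively. Hence
\[
\sum_v d_v(n-1-d_v) \;=\; 2(N_1+N_2).
\]
Combining this with the standard identity $\sum_v\binom{d_v}{2}=3T+N_2$ (paths of length $2$ counted via their middle vertex) and dropping the nonnegative term $N_1$ yields
\[
3T \;\geq\; \sum_v\binom{d_v}{2}-\tfrac{1}{2}\sum_v d_v(n-1-d_v) \;=\; \sum_v d_v^2 - n f_1.
\]
Cauchy--Schwarz then provides $\sum_v d_v^2 \geq (2f_1)^2/n$, and rearranging gives $T\geq f_1(4f_1-n^2)/(3n)$, as desired.

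For the equality case, both steps must be tight simultaneously. Cauchy--Schwarz forces $G$ to be regular, while $N_1=0$ rules out triples having exactly one edge in $G$; equivalently, the complement $\bar G$ contains no induced path on three vertices, so $\bar G$ is a disjoint union of cliques. Regularity then forces these cliques to have a common size $n/r$, meaning $r$ divides $n$ and $G=T_r(n)$, at which point $f_1=t_r(n)$. Conversely, substituting the common degree $d=n-n/r$ of $T_r(n)$ into the chain of identities above shows that equality is attained by this graph. The only mildly non-routine step is the clean equality analysis (especially translating $N_1=0$ into the structural statement about $\bar G$); everything else is combinatorial bookkeeping together with a single Cauchy--Schwarz inequality.
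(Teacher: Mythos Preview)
Your proof is correct. The paper does not prove this theorem itself---it is quoted as a classical result---but the paper's proof of the generalization (Theorem~\ref{lm: k-1-neighborly higher dimensional spheres}), specialized to $k=2$, is essentially the dual of your argument: it double-counts via the \emph{non-edges} and applies Cauchy--Schwarz to the complementary degrees $n-1-d_v$ rather than to the $d_v$, arriving at the same inequality with the same equality conditions ($G$ regular and $N_1=0$, i.e., $T_2=\emptyset$ in the paper's notation). Your degree-based bookkeeping is slightly more direct for this $k=2$ statement; the paper's non-face formulation is what lets the argument extend to all $k$.
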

	If $\Delta$ is a simplicial complex, then a $3$-clique of the graph of $\Delta$ is either a $2$-face or a missing $2$-face. Thus, the number of $3$-cliques of the graph of $\Delta$ is $f_2+m_2$. Theorem \ref{thm:Goodman-bound} then implies
	
	\begin{corollary} \label{cor:Goodman-bound-on-m_2}
		Let $\Delta$ be a simplicial complex of dimension $\geq 1$ and with $n$ vertices. Then $m_2\geq \frac{f_1(4f_1-n^2)}{3n}-f_2.$	
	\end{corollary}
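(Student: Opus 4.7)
The plan is essentially to combine Goodman's bound on the number of triangles in a graph with the elementary observation that in any simplicial complex, each triangle of the underlying graph accounts for either a genuine $2$-face or a missing $2$-face. This dichotomy is exactly what turns a purely graph-theoretic inequality into a statement about $m_2$.

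More concretely, I would first note the identity that the number of $3$-cliques in the graph $\skel_1(\Delta)$ equals $f_2(\Delta) + m_2(\Delta)$: any triple of pairwise-adjacent vertices $\{u,v,w\}$ has all three of its edges in $\Delta$, so $\{u,v,w\}$ is itself a face precisely when it is not a missing face, and by the definition of a missing face these are the only two options. This identity is also recorded in the paragraph immediately before Theorem \ref{thm:Goodman-bound}, and it uses nothing about $\Delta$ except that $\dim \Delta \geq 1$, which guarantees that the graph and the notion of a missing $2$-face are meaningful.

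Next I would apply Theorem \ref{thm:Goodman-bound} to $G = \skel_1(\Delta)$. Since $G$ has $n$ vertices and $f_1$ edges, the number of $3$-cliques in $G$ is at least $\frac{f_1(4f_1 - n^2)}{3n}$. Combining this with the identity above gives
\[
f_2 + m_2 \;=\; \#\{3\text{-cliques in } \skel_1(\Delta)\} \;\geq\; \frac{f_1(4f_1 - n^2)}{3n},
\]
and subtracting $f_2$ from both sides yields the claimed bound on $m_2$.

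There is essentially no obstacle here: the corollary is a direct translation of Goodman's theorem via the triangle-counting identity $f_2 + m_2 = \#\{3\text{-cliques}\}$. The only thing worth flagging is that the bound can be negative (e.g. when $4f_1 < n^2$, making the right-hand side nonpositive, or when $f_2$ is already large), in which case the inequality is trivially satisfied and carries no information; this is an artifact of using Goodman's convex relaxation rather than the sharp Kruskal--Katona-type bound, but it costs nothing in the proof itself.
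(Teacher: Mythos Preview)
Your proposal is correct and matches the paper's approach exactly: the paper also notes that the number of $3$-cliques in the graph of $\Delta$ equals $f_2+m_2$ and then invokes Theorem~\ref{thm:Goodman-bound} directly. There is nothing to add.
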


	Our first result is an extension of Corollary \ref{cor:Goodman-bound-on-m_2} to a bound on $m_k$ for all simplicial complexes.
	
	\begin{theorem}\label{lm: k-1-neighborly higher dimensional spheres}
		Let $k\geq 2$, and let $\Delta$ be a simplicial complex of dimension $\geq k-1$ and  with $n$ vertices. Then
		$$ 
		m_k\geq \frac{k^2}{(k+1)\binom{n}{k-1}}f_{k-1}^2-\frac{n(k-1)-k(k-2)}{k+1}f_{k-1}-f_k.
		$$
	\end{theorem}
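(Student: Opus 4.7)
The plan is to extend Goodman's double-counting proof by working one dimension higher: instead of counting paths of length $2$ (``cherries'') in a graph by their middle vertex and then by their endpoints, I would count pairs of $(k-1)$-faces that share $k-1$ vertices in two different ways. Concretely, for a set $\tau$ of size $k-1$, write $d(\tau)=\#\{v\notin\tau : \tau\cup\{v\}\in\Delta\}$, and for a set $F$ of size $k+1$, write $N(F)$ for the number of $k$-subsets of $F$ that are $(k-1)$-faces of $\Delta$. Every unordered pair of distinct $(k-1)$-faces $\sigma_1,\sigma_2$ with $|\sigma_1\cap\sigma_2|=k-1$ can be indexed either by $\tau=\sigma_1\cap\sigma_2$ together with the two ``extra'' vertices, or by $F=\sigma_1\cup\sigma_2$ together with two of its $(k-1)$-subfaces. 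This yields the key identity
\[
\sum_{\tau\in\binom{[n]}{k-1}}\binom{d(\tau)}{2}\;=\;\sum_{F\in\binom{[n]}{k+1}}\binom{N(F)}{2}.
\]

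For the lower bound on the left-hand side, I would use Cauchy--Schwarz (equivalently, Jensen) applied to the convex function $\binom{x}{2}$: since $\sum_{\tau}d(\tau)=k f_{k-1}$ (each $(k-1)$-face has $k$ subsets of size $k-1$) and the sum runs over $\binom{n}{k-1}$ terms, one gets $\sum_\tau d(\tau)^{2}\geq k^{2}f_{k-1}^{2}/\binom{n}{k-1}$, so
\[
\sum_\tau\binom{d(\tau)}{2}\;\geq\;\frac{1}{2}\left(\frac{k^{2}f_{k-1}^{2}}{\binom{n}{k-1}}-k f_{k-1}\right).
\]

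For the upper bound on the right-hand side, note that $N(F)\in\{0,1,\dots,k+1\}$, and $N(F)=k+1$ holds precisely when $F$ is a $k$-face or a missing $k$-face, which happens $f_k+m_k$ times. The elementary inequality
\[
\binom{N}{2}\;\leq\;\frac{k-1}{2}N+\frac{k+1}{2}\,[N=k+1],\qquad 0\leq N\leq k+1,
\]
(check: $N\leq k$ gives $\binom{N}{2}=N(N-1)/2\leq (k-1)N/2$, while at $N=k+1$ both sides equal $k(k+1)/2$) lets me linearize. Summing and using $\sum_{F}N(F)=(n-k)f_{k-1}$ (each $(k-1)$-face lies in $n-k$ sets of size $k+1$) gives
\[
\sum_F\binom{N(F)}{2}\;\leq\;\frac{(k-1)(n-k)f_{k-1}}{2}+\frac{(k+1)(f_k+m_k)}{2}.
\]

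Combining the two bounds via the identity, multiplying by $2$, and solving for $m_k$ yields $(k+1)m_k\geq\frac{k^{2}f_{k-1}^{2}}{\binom{n}{k-1}}-\bigl((k-1)(n-k)+k\bigr)f_{k-1}-(k+1)f_k$, and the coefficient simplifies as $(k-1)(n-k)+k=(k-1)n-k(k-2)$, which is exactly the claimed bound. There is no serious obstacle here: both the identity and the convexity argument are routine once one picks the right pair of ``dual'' counts, and the only delicate step is choosing a linearization of $\binom{N}{2}$ that is tight precisely at $N=k+1$ (so that it interacts cleanly with $f_k+m_k$); the recovery of Goodman's bound at $k=2$ serves as a sanity check.
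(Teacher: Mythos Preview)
Your proof is correct and follows essentially the same strategy as the paper's: double-count pairs of $k$-subsets sharing a common $(k-1)$-subset, apply Cauchy--Schwarz on the $(k-1)$-subset side, and use a linear bound on the $(k+1)$-subset side to isolate $f_k+m_k$. The only difference is cosmetic: the paper works with the \emph{complementary} quantities (non-face $k$-subsets $S$, and sets $T_i$ of $(k+1)$-sets containing exactly $i$ non-faces), while you work directly with faces via $d(\tau)$ and $N(F)$; since $|S_\tau|+d(\tau)=n-k+1$ and $N(F)=k+1-i$, the two Cauchy--Schwarz steps are equivalent, and your linearization $\binom{N}{2}\le\frac{k-1}{2}N+\frac{k+1}{2}[N=k+1]$ is the dual of the paper's inequality $i-1\ge\frac{2}{k+1}\binom{i}{2}$. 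Your version has the mild advantage of avoiding the back-substitution $|S|=\binom{n}{k}-f_{k-1}$ at the end.
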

	\noindent Note that when $k=2$, Theorem \ref{lm: k-1-neighborly higher dimensional spheres} reduces to Goodman's bound (see Corollary \ref{cor:Goodman-bound-on-m_2}).
	In fact, our proof is very similar to the proof of Theorem \ref{thm:Goodman-bound} given in \cite{Goodman}. Also, similarly to Goodman's bound, the lower bound on $m_k+f_k$ from Theorem \ref{lm: k-1-neighborly higher dimensional spheres} is non-trivial (i.e., positive) only when $f_{k-1}$ is large.
	
	\begin{proof}
		Let $S$ be the collection of $k$-subsets of $[n]$ that are {\bf not} faces of $\Delta$. Then $|S|=\binom{n}{k}-f_{k-1}$.
		For a $(k-1)$-subset $L$ of $[n]$, let $S_L=\{s\in S: L\subset s\}$. By double counting, $k|S|=\sum_{L\subset [n], |L|=k-1}|S_L|$.
		
		Let $T$ consist of those $(k+1)$-subsets of $[n]$ that contain at least one element of $S$ as a subset. In particular, $f_k+m_k+|T|=\binom{n}{k+1}$.
		For $1\leq i\leq k+1$, define $$T_i=\{t\in T:\text{exactly $i$ elements of $S$ are subsets of $t$} \}. $$
		Then
		\begin{equation}\label{eq1}
			|T|=(n-k)|S|-\sum_{i=2}^{k+1}|T_i|(i-1).
		\end{equation} 
		
		Consider $L\subset[n]$, $|L|=k-1$. Observe that the union of any two elements of $S_L$ is in one of $T_i$'s, and that all such unions are distinct.  On the other hand, any element $C$ of $T_i$ contains exactly $i$ elements of $S$. The union of every two of these $i$ elements is $C$ and every two such elements belong to a unique $S_L$. Therefore, by double counting,
		\begin{equation}  \label{Cauchy-Schwarz}
			\begin{split}
				\sum_{i=2}^{k+1}|T_i|\binom{i}{2}&=\sum_{L\subset [n], |L|=k-1}\binom{|S_L|}{2}=\frac{1}{2}\left(-k|S|+\sum_{L\subset [n], |L|=k-1}|S_L|^2\right) \\
				&\geq -\frac{k|S|}{2}+\frac{1}{2\binom{n}{k-1}}\left(\sum_{L\subset [n], |L|=k-1}|S_L|\right)^2
				=\frac{k^2{|S|^2}}{2\binom{n}{k-1}}-\frac{k|S|}{2},
			\end{split}
		\end{equation}
		where the penultimate step is by the Cauchy--Schwarz inequality. Thus,
		\begin{equation}\label{eq2}
			\sum_{i=2}^{k+1}|T_i|(i-1)\geq \frac{2}{k+1}\sum_{i=2}^{k+1}|T_i|\binom{i}{2}\geq \frac{k^2|S|^2}{(k+1)\binom{n}{k-1}}-\frac{k|S|}{k+1}.
		\end{equation}
		Combining \eqref{eq1} and \eqref{eq2}, we obtain
		\begin{equation} \label{eq:|T|}
			|T|\leq (n-k)|S|-\left(\frac{k^2|S|^2}{(k+1)\binom{n}{k-1}}-\frac{k|S|}{k+1}\right). 
		\end{equation} 
		Recall that $|T|=\binom{n}{k+1}-f_k-m_k$ and $|S|=\binom{n}{k}-f_{k-1}$. Substituting these expressions in eq.~\eqref{eq:|T|} and simplifying the coefficients,  implies the promised lower bound on $m_k$.
	\end{proof}
	
	Analyzing equations \eqref{Cauchy-Schwarz} and \eqref{eq2}, we observe that the inequality of Theorem \ref{lm: k-1-neighborly higher dimensional spheres} holds as equality if and only if 1) all the sets $S_L$, as $L$ ranges over $(k-1)$-subsets of $[n]$, have the same size,  and 2) all $T_i$, with $2\leq i\leq k$, are empty sets. When $k=2$ this means that equality holds if and only all vertices have the same degree and $T_2=\emptyset$, which easily implies that the graph of $\Delta$ is $T_r(n)$ for some $r$ that divides $n$; see Theorem \ref{thm:Goodman-bound}. 
	When $k=3$, as an example that attains the bound, take any $3$-dimensional $2$-neighborly complex on vertex set $[7]$ whose set of missing $2$-faces consists of $$\{1,2,3\}, \{1,4,5\}, \{1,6,7\}, \{2,4,6\}, \{2,5,7\},\{3,4,7\}, \{3,5,6\}.$$ In this example, the missing $2$-faces correspond to flats of size $3$ of the Fano matroid.
	
	\subsection{Nearly neighborly Eulerian complexes}
	We now discuss an application of Theorem \ref{lm: k-1-neighborly higher dimensional spheres} to nearly neighborly Eulerian complexes.
	The {\em reduced Euler characteristic} of a simplicial complex $\Delta$ is $\tilde{\chi}(\Delta):=\sum_{i=-1}^{\dim \Delta} (-1)^i f_i(\Delta)$. A simplicial complex $\Delta$ is called {\em Eulerian} if for every face $F$ of $\Delta$, including the empty face, $\tilde{\chi}(\lk(F, \Delta))=(-1)^{\dim \Delta -\dim F-1}$. For instance, all simplicial spheres are Eulerian and so are all odd-dimensional simplicial manifolds. Eulerian complexes were introduced by Klee in \cite{Klee64}. Klee proved that if $\Delta$ is Eulerian of dimension $d-1$, then $\Delta$ satisfies the Dehn--Sommerville equations, namely, $h_i(\Delta)=h_{d-i}(\Delta)$ for all $i$. Using these relations leads to the following restatement of Theorem \ref{lm: k-1-neighborly higher dimensional spheres} for nearly neighborly Eulerian complexes. As with spheres, we  say that a $(d-1)$-dimensional Eulerian complex $\Delta$ is {\em nearly neighborly} if it is $(\lfloor d/2\rfloor-1)$-neighborly, and  that $\Delta$ is {\em neighborly} if it is $\lfloor d/2\rfloor$-neighborly. In particular, all $3$- and $4$-dimensional Eulerian complexes are nearly neighborly.

	\begin{corollary} \label{lm: near neighborly spheres}
		Let $k\geq 2$, $d\in\{2k,2k+1\}$, and
		let $\Delta$ be a $(d-1)$-dimensional nearly neighborly Eulerian complex with $n$ vertices. Then 
		\begin{equation*} 
			\begin{split}
				m_k(\Delta) \geq\;&  \frac{k^2}{(k+1)\binom{n}{k-1}}f_{k-1}(\Delta)^2-\frac{n(k-1)-k(k-2)}{k+1}f_{k-1}(\Delta) \\
				&-f_k(C(d,n))+\left(\lfloor d/2\rfloor+1+(-1)^{d-1}\right)\left(\binom{n}{k}-f_{k-1}(\Delta)\right).
			\end{split}
		\end{equation*}
	\end{corollary}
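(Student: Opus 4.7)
The plan is to reduce the statement to Theorem \ref{lm: k-1-neighborly higher dimensional spheres} by giving an explicit formula for $f_k(\Delta)$ in terms of $f_k(C(d,n))$ and $\binom{n}{k}-f_{k-1}(\Delta)$. The Eulerian hypothesis supplies the Dehn--Sommerville relations, and combined with $(k-1)$-neighborliness they force $f_k(\Delta)$ to depend only on $f_{k-1}(\Delta)$ (with all face numbers of smaller dimension already fixed to those of $C(d,n)$).

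Applying Theorem \ref{lm: k-1-neighborly higher dimensional spheres} to $\Delta$ yields the first two terms on the right-hand side of the desired inequality together with a trailing $-f_k(\Delta)$. Thus it suffices to show that
$$f_k(\Delta) = f_k(C(d,n)) - \left(\lfloor d/2\rfloor + 1 + (-1)^{d-1}\right)\left(\binom{n}{k}-f_{k-1}(\Delta)\right).$$
Because $\Delta$ is $(k-1)$-neighborly, $f_{i-1}(\Delta)=\binom{n}{i}$ for all $i\leq k-1$, so $h_i(\Delta)=h_i(C(d,n))$ and $g_i(\Delta)=g_i(C(d,n))$ for $0\leq i\leq k-1$. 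Moreover, the coefficient of $f_{k-1}$ in $h_k$ (and hence in $g_k=h_k-h_{k-1}$) equals $1$, so
$$g_k(\Delta) - g_k(C(d,n)) = f_{k-1}(\Delta) - \binom{n}{k}.$$
Being Eulerian, $\Delta$ satisfies Dehn--Sommerville, so $f_k(\Delta)$ is a linear function of $g_0(\Delta),g_1(\Delta),\ldots,g_k(\Delta)$ (and $n,d$). If $c_k$ denotes the coefficient of $g_k$ in this linear expression, the two observations above combine to give $f_k(\Delta)-f_k(C(d,n)) = c_k\cdot(g_k(\Delta)-g_k(C(d,n)))$, which reduces the task to the computation of $c_k$.

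The calculation of $c_k$ is the only nontrivial step, though it is routine. Starting from the standard identity $f_k=\sum_{i=0}^{k+1}\binom{d-i}{k+1-i}\,h_i$, I would use the Dehn--Sommerville relation $h_{k+1}=h_{d-k-1}$ to eliminate $h_{k+1}$ --- it becomes $h_{k-1}$ when $d=2k$ and $h_k$ when $d=2k+1$ --- and then substitute $h_k=h_{k-1}+g_k$. Reading off the coefficient of $g_k$ gives $c_k=\binom{k}{1}=k$ when $d=2k$, and $c_k=\binom{k+1}{1}+\binom{k}{0}=k+2$ when $d=2k+1$. Both cases are captured by the uniform formula $c_k=\lfloor d/2\rfloor + 1 + (-1)^{d-1}$. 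Plugging the resulting expression for $f_k(\Delta)$ back into the bound from Theorem \ref{lm: k-1-neighborly higher dimensional spheres} produces the inequality in the statement. The main (minor) obstacle is simply bookkeeping in this binomial computation; no deeper input is needed beyond the fact that $(k-1)$-neighborliness pins down the $g$-vector in all degrees below $k$.
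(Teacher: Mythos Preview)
Your proposal is correct and follows essentially the same approach as the paper: reduce to Theorem~\ref{lm: k-1-neighborly higher dimensional spheres} by proving the identity $f_k(\Delta)=f_k(C(d,n))-\big(\lfloor d/2\rfloor+1+(-1)^{d-1}\big)\big(\binom{n}{k}-f_{k-1}(\Delta)\big)$ via $(k-1)$-neighborliness and Dehn--Sommerville. The only cosmetic difference is that you phrase the computation in terms of the $g$-vector and isolate the single coefficient $c_k$, whereas the paper works directly with the $h$-numbers and treats the two parities of $d$ separately; the underlying binomial bookkeeping is identical.
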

	\begin{proof}
		By Theorem \ref{lm: k-1-neighborly higher dimensional spheres}, to prove the statement, it suffices to show that $$f_k(\Delta)=f_k(C(d,n))-\left(\lfloor d/2\rfloor+1+(-1)^{d-1}\right)\left(\binom{n}{k}-f_{k-1}(\Delta)\right).$$ As we will see, this is an easy consequence of 
		$(k-1)$-neighborliness and Dehn--Sommerville relations. 
		
		Assume first that $d=2k$. Since $C(2k,n)$ is $k$-neighborly and $\Delta$ is $(k-1)$-neighborly, $$f_{i-1}(\Delta)=f_{i-1}(C(2k, n))  \mbox{ for all } i\leq k-1 \quad\mbox{and} \quad f_{k-1}(\Delta)=f_{k-1}(C(2k, n))-m_{k-1}(\Delta).$$ Hence $$h_i(\Delta)=h_i(C(2k, n)) \mbox{ for all } i\leq k-1 \quad\mbox{and} \quad h_k(\Delta)=h_k(C(2k, n))-m_{k-1}(\Delta). $$ By the Dehn--Sommerville relations, $h_{k+1}(\Delta)=h_{k-1}(\Delta)$ and $h_{k+1}(C(2k,n))=h_{k-1}(C(2k,n))$, and so
		\begin{eqnarray*}
			f_k(\Delta) &=&\sum_{i=0}^{k+1}\binom{2k-i}{k-1}h_i(\Delta)=\left[\sum_{i=0}^{k+1}\binom{2k-i}{k-1}h_i(C(2k, n))\right]-km_{k-1}(\Delta)\\
			&=&f_k(C(2k,n))-km_{k-1}(\Delta)=f_k(C(2k,n))-k\left(\binom{n}{k}-f_{k-1}(\Delta)\right),
		\end{eqnarray*}
		as desired.
		
		The case of $d=2k+1$ is similar. In this case, the Dehn--Sommerville relations imply that $h_{k+1}(\Delta)=h_{k}(\Delta)=h_k(C(2k+1, n))-m_{k-1}(\Delta)=h_{k+1}(C(2k+1, n))-m_{k-1}(\Delta)$, and hence
		\begin{eqnarray*}
			f_k(\Delta)&=&\sum_{i=0}^{k-1}\binom{2k+1-i}{k}h_i(\Delta)+(k+1)h_k(\Delta)+h_{k+1}(\Delta)\\
			&=&f_k(C(2k+1,n))-(k+2)m_{k-1}(\Delta)=f_k(C(2k+1,n))-(k+2)\left(\binom{n}{k}-f_{k-1}(\Delta)\right).
		\end{eqnarray*}
		The result follows.
	\end{proof}
	
	The class of $3$- and $4$-dimensional Eulerian complexes deserves special attention. In this case, the Dehn--Sommerville relations imply that $f_2=2(f_1-f_0)$ if dimension is $3$, and $f_2=4f_1-10f_0+20$ if dimension is $4$. Thus, Corollary \ref{cor:Goodman-bound-on-m_2} (or Corollary \ref{lm: near neighborly spheres}) can be rewritten as follows:
	
	\begin{corollary} \label{cor:3-and-4-Eulerian}
		Let $\Delta$ be a $(d-1)$-dimensional Eulerian complex with $n$ vertices. Then 
		$m_2\geq \frac{f_1(4f_1-n^2)}{3n}-2(f_1-n)$ if $d=4$, and $m_2\geq \frac{f_1(4f_1-n^2)}{3n}-(4f_1-10n+20)$ if $d=5$. 
	\end{corollary}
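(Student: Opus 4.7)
The plan is a two-step reduction with no genuine content beyond bookkeeping. First, I would apply Corollary~\ref{cor:Goodman-bound-on-m_2} (Goodman's bound) to the $1$-skeleton of $\Delta$ to obtain the unconditional inequality
\[
m_2(\Delta) \;\geq\; \frac{f_1(4f_1-n^2)}{3n} \;-\; f_2(\Delta).
\]
Second, I would use the Dehn--Sommerville relations $h_i = h_{d-i}$, which hold for every Eulerian complex by Klee's theorem, to eliminate $f_2(\Delta)$ in favor of $f_0=n$ and $f_1$, since for $d\in\{4,5\}$ these relations force $f_2$ to be a linear function of $(n, f_1)$.

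The elimination is a short computation using $h_j = \sum_{i=0}^{j}(-1)^{j-i}\binom{d-i}{d-j}f_{i-1}$. For $d=4$, the relation $h_1 = h_3$ expands to
\[
n-4 \;=\; f_2 - 2f_1 + 3n - 4,
\]
so $f_2 = 2(f_1-n)$; substituting into the inequality above yields the claimed bound for $d=4$. For $d=5$, the relation $h_2 = h_3$ expands to
\[
10 - 4n + f_1 \;=\; f_2 - 3f_1 + 6n - 10,
\]
so $f_2 = 4f_1 - 10n + 20$; substituting again yields the claimed bound for $d=5$.

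There is no real obstacle here: the argument is just Goodman's bound composed with a single Dehn--Sommerville equation. The only thing to double-check is the algebra producing the two expressions for $f_2$, and these are precisely the expressions that appear implicitly in the proof of Corollary~\ref{lm: near neighborly spheres} in the $k=2$, $d\in\{4,5\}$ specialization, which confirms that Corollary~\ref{lm: near neighborly spheres} and the present corollary give the same inequality in this range.
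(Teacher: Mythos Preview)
Your proposal is correct and is exactly the paper's own argument: the paper states just before the corollary that the Dehn--Sommerville relations give $f_2=2(f_1-f_0)$ when $d=4$ and $f_2=4f_1-10f_0+20$ when $d=5$, and then says the corollary is simply Corollary~\ref{cor:Goodman-bound-on-m_2} rewritten using these identities. Your computation of $f_2$ via $h_1=h_3$ (for $d=4$) and $h_2=h_3$ (for $d=5$) is correct, and you have also silently repaired the typo in the displayed statement (the Goodman term should read $\frac{f_1(4f_1-n^2)}{3n}$, as it does in the proof of Corollary~\ref{cor:flag}).
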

	
	If $\Delta$ is also flag (or, more generally, a complex with $m_2=0$), then Corollary \ref{cor:3-and-4-Eulerian} leads to the following upper bound on the number of edges of $\Delta$.

	\begin{corollary} \label{cor:flag}
		Let $\Delta$ be a $(d-1)$-dimensional flag Eulerian complex with $n$ vertices.
		Then $f_1<n^2/4+3n/2$ if $d=4$, and $f_1<n^2/4+3n$ if $d=5$.
	\end{corollary}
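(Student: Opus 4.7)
The plan is to feed the flag hypothesis $m_2(\Delta) = 0$---valid because a flag complex has no missing faces of dimension $\ge 2$---into Corollary \ref{cor:3-and-4-Eulerian}. In each of the two cases ($d = 4$ and $d = 5$) this turns the stated lower bound on $m_2$ into a quadratic inequality in $f_1$. The strategy will then be to exhibit the claimed upper bound as a point where the quadratic is strictly positive, hence strictly above the larger root, which forces the strict inequality.

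For $d = 4$: after substituting $m_2 = 0$ and clearing denominators, the bound in Corollary \ref{cor:3-and-4-Eulerian} becomes
$$p(f_1) := 4 f_1^2 - (n^2 + 6n)\, f_1 + 6 n^2 \;\le\; 0.$$
I would simply plug in the candidate bound: a short computation gives $p(n^2/4 + 3n/2) = 6 n^2 > 0$. Since $p$ opens upward and the candidate $n^2/4 + 3n/2$ exceeds the vertex abscissa $(n^2 + 6n)/8$ of $p$, the candidate lies strictly to the right of the larger root of $p$; therefore $f_1 < n^2/4 + 3n/2$.

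The $d = 5$ case would go through identically. The inequality becomes
$$q(f_1) := 4 f_1^2 - (n^2 + 12 n)\, f_1 + (30 n^2 - 60 n) \;\le\; 0,$$
and $q(n^2/4 + 3n) = 30 n (n - 2) > 0$ (note $n \ge 6$ since $\Delta$ is $4$-dimensional). Again $n^2/4 + 3n > (n^2 + 12 n)/8$, so the same argument yields $f_1 < n^2/4 + 3n$.

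I do not anticipate any real obstacle: the only ``content'' is recognizing that the candidate bounds are engineered so that substitution into the quadratic produces a clean strictly positive remainder ($6 n^2$ and $30 n (n - 2)$), which automatically gives strict inequality and sidesteps any need to extract roots or analyze the discriminant.
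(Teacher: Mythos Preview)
Your proof is correct and follows essentially the same route as the paper: both feed the flag condition $m_2=0$ into Corollary~\ref{cor:3-and-4-Eulerian} and analyze the resulting quadratic in $f_1$. The paper phrases it contrapositively (if $f_1$ is at least the claimed bound then the lower bound on $m_2$ is at least $1$), whereas you plug the candidate bound directly into the quadratic and observe that it lands strictly to the right of the larger root; this is the same argument with slightly cleaner bookkeeping.
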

	\begin{proof}
		If $d=4$, then by Corollary \ref{cor:3-and-4-Eulerian},  $m_2\geq  \frac{f_1(4f_1-n^2)}{3n}-2(f_1-n)$. Solving the inequality $\frac{f_1(4f_1-n^2)}{3n}-2(f_1-n)\geq 1$ w.r.t.~$f_1$, we conclude that if $f_1\geq n^2/4 + 3n/2$, then $m_2\geq 1$ and hence the complex is not flag. 
		Similarly, if $d=5$, solving the inequality $\frac{f_1(4f_1-n^2)}{3n}-(4f_1-10n+20) \geq 1$ implies that no $4$-dimensional Eulerian complex with $n$ vertices and $f_1\geq n^2/4+3n$ can be flag.
	\end{proof}
	
	In fact, it is proved in \cite{Z-flag} that any $3$-dimensional flag Eulerian complex with $n$ vertices must satisfy $f_1\leq \lfloor n^2/4\rfloor +n$ and this upper bound is tight. In other words, in dimension $3$, the upper bound on $f_1$ produced by our methods is not tight, but it is not too far from being tight. On the other hand, the upper bound on $f_1$ (and hence also on all other face numbers) for flag Eulerian complexes of dimension $4$ appears to be new. It is conjectured that a flag $4$-sphere with $n$ vertices has at most $\lfloor n^2/4\rfloor+2n-5$ edges; see \cite[Conjecture 18]{Adam-Hladky}.
	
	Corollary \ref{cor:3-and-4-Eulerian} also leads to the following asymptotic bound.
	Let $\Delta$ be a $3$-dimensional Eulerian complex with $f_0=n$ vertices and $f_1$ edges. Assume, $f_1=\lambda n^2 +\mu n^\alpha + o(n^\alpha)$, where $0\leq \lambda \leq 1/2$ and $0\leq \alpha <2$. Then
	\begin{equation*}
		\begin{split}
			m_2 &\geq \frac{f_1(4f_1-n^2)}{3n}-2(f_1-n)\\
			&=\frac{1}{3}\left( \lambda n +\mu n^{\alpha-1}\right) \left( 4\lambda n^2 +4\mu n^\alpha -n^2\right) - (2\lambda n^2-2n) +o(n^{\alpha+1}) \\
			&=\begin{cases}
				\frac{\lambda(4\lambda-1)}{3}n^3 +o(n^3) & \text{if }1/2\geq \lambda>1/4 \\
				\frac{\mu n^{\alpha+1}}{3}+o(n^{\alpha+1}) &\text{if } \lambda=1/4, \, 2>\alpha>1\\
				\frac{2\mu-3}{6}n^2+o(n^2) & \text{if }\lambda=1/4, \, \alpha=1, \mu \geq 3/2\\
				0 & \text{otherwise}		
			\end{cases}.
		\end{split}
	\end{equation*}
	In the same vein, Corollary \ref{lm: near neighborly spheres} implies that if $k\geq 2$ and $d\in\{2k,2k+1\}$, then for $\epsilon>0$, any nearly neighborly Eulerian complex of dimension $d-1$  with $n\gg 0$ vertices and $f_{k-1} \geq (\frac{k-1}{k}+\epsilon)\binom{n}{k}$ has $m_k \geq  C_\epsilon n^{k+1} +O(n^k)$, where $C_\epsilon$ is some positive constant that depends only on $k$ and $\epsilon$.
	
	We close this section with a few open problems.
	For fixed $f_0$ and $f_1$, denote by $\mathcal{C}(f_0, f_1)$ the class of graphs with $f_0$ vertices and $f_1$ edges that attain the minimum number $G_3(f_0, f_1)$  of $3$-cliques. 
	When $f_0$ is large and the edge density $f_1/\binom{f_0}{2}$ is bounded away from $1$, a complete characterization of $\mathcal{C}(f_0, f_1)$ can be found in \cite{LPS}. For instance, when $f_1=t_r(f_0)$ for some $r$ that divides $f_0$, $\mathcal{C}(f_0, f_1)=\{T_r(f_0)\}$. The discussion of this section shows that for $d\in \{4,5\}$, characterizing the simplicial $(d-1)$-spheres that attain the lower bound on $m_2$ in terms of $f_0$ and $f_1$ is equivalent to characterizing the values of $f_0, f_1$, and the graphs in $\mathcal{C}(f_0, f_1)$ that can be realized as the graphs of simplicial $(d-1)$-spheres.
	This leads to the following problem.
	
	\begin{problem} \label{problem:turan}
		Let $d\in \{4,5\}$. Characterize simplicial $(d-1)$-spheres with $f_0$ vertices and $f_1$ edges  whose graphs are in $\mathcal{C}(f_0, f_1)$. In particular, for which values of $r$ and $f_0$ can the Tur\'an graph $T_r(f_0)$ be realized as the graph of a simplicial $(d-1)$-sphere?
	\end{problem}
	\noindent Not much is known. For part 1, the existence of neighborly $3$- and $4$-spheres shows that the complete graph $T_1(n)$ is the graph of a $3$-sphere for all $n\geq 5$ and it is the graph of a $4$-sphere for all $n\geq 6$. Similarly, the existence of centrally symmetric (cs, for short) $3$- and $4$-spheres with $2n$ vertices that are cs-$2$-neighborly (i.e., every two non-antipodal vertices are connected by an edge) implies that $T_n(2n)$ is the graph of a $3$-sphere for all $n\geq 4$ and it is also the graph of a $4$-sphere for all $n\geq 5$; see \cite{Jockusch95, NZ-cs-neighborly-new}. Another result along these lines is from \cite{Z-balanced}:  it is shown there that $T_4(16)$ is the graph of a $3$-sphere while $T_4(12)$ is not.
	\begin{problem}
		Let $d\in \{4,5\}.$ Let $$U_d(f_0, f_1)=\begin{cases}
			g_2^{\langle 2\rangle}+g_2 & \text{if } d=4 \\
			g_2^{\langle 2\rangle} & \text{if } d=5
		\end{cases},$$
		$$\text{and let}\quad L_d(f_0, f_1)=\begin{cases}
			G_3(f_0, f_1)-2(f_1-f_0) & \text{if } d=4 \\
			G_3(f_0, f_1)-(4f_1-10f_0+20) & \text{if } d=5
		\end{cases}.$$
		By Theorem \ref{lem:upper-bounds} and our discussion in this section, $U_d(f_0, f_1)$ is the maximum value of $m_2$ that a simplicial $(d-1)$-sphere with $f_0$ vertices and $f_1$ edges can have, while $L_d(f_0, f_1)$ is a lower bound on possible values of $m_2$. Which integers $m$ between $L_d(f_0, f_1)$ and $U_d(f_0, f_1)$, can be realized as an $m_2$-number of simplicial $(d-1)$-spheres with $f_0$ vertices and $f_1$ edges?
	\end{problem}
	
	\noindent As an example, in the case of $d=4$, $f_0=10$, and $f_1=\binom{10}{2}-5=40$, the upper bound on $m_2$ is attained by the Billera--Lee $4$-polytope with $10$ vertices and $5$ missing edges, while the lower bound is attained by a centrally symmetric $3$-sphere with $10$ vertices whose graph is $T_5(10)$. 
Hence $U_4(10, 40)=30$ and $L_4(10, 40)=20$ are both realizable as $m_2$ of a $3$-sphere with $10$ vertices and $40$ edges.  We do not know which integers between $20$ and $30$ are also realizable. 

\section{The $m$-vectors of neighborly $4$-spheres}
To start our discussion of the $m$-vectors of neighborly $4$-spheres, recall that by Corollary \ref{cor: m-vectors of k-neighborly spheres}, 
all $m$-numbers of neighborly $2k$-spheres with $n$ vertices, except $m_{k+1}$, are fixed functions of $n$ and $k$, while $m_{k+1}$ could vary and is upper bounded by $\binom{n-k-3}{k}$. This motivates the following
\begin{question}\label{question: m_{k+1} of neighborly 2k-spheres}
	Let $k\geq 2$. For any sufficiently large $n$ and any $m$ between $0$ and $\binom{n-k-3}{k}$, are there neighborly $2k$-spheres with $n$ vertices and with $m_{k+1}=m$?
\end{question}
By far the largest family of neighborly $2k$-spheres with $n$ vertices was constructed in \cite{NZ-neighborly}; the construction is given by {\em relative squeezed spheres}. Since all these spheres are $k$-stacked, by Corollary \ref{cor: m-vectors of stacked spheres}, they all have $m_{k+1}=\binom{n-k-3}{k}$. On the other extreme is the question of whether there exist neighborly $2k$-spheres with $m_{k+1}=0$ (see Question \ref{question:m_{k+1}=0}). The following theorem, whose proof we defer until Section 7, partially answers this question.

\begin{theorem}\label{thm: neighborly 2k-sphere m_k+1=0}
	For all odd $k\geq 3$ and any $n\geq 2k+4$ as well as for $k=2$ and $n\geq 9$, there exists a neighborly $(2k+1)$-polytope with $n$ vertices all of whose missing faces have dimension $k$.
\end{theorem}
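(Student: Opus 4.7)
The plan is to combine an explicit base construction for the smallest allowed $n$ with an inductive step that adds one vertex at a time via a carefully tuned sewing operation.

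For the base case, fix $n_0=9$ when $k=2$ and $n_0=2k+4$ when $k$ is odd and $\geq 3$. I would exhibit one neighborly $(2k+1)$-polytope $P_0$ on $n_0$ vertices with $m_{k+1}(\partial P_0)=0$. When $k=2$, the existence of a $2$-neighborly $4$-sphere with $9$ vertices and $m_3=0$ is the $m=0$ instance of Theorem~\ref{thm: neighborly 4-spheres}, and a polytopal realization of this sphere is obtained from the construction in Section~6. For odd $k\geq 3$, the polytope $P_0$ can be specified combinatorially (by its list of missing $k$-faces) and realized convexly via its $2$-dimensional Gale diagram, exploiting $n_0-(2k+2)=2$ together with a centrally symmetric arrangement of $2k+4$ points in $\R^2$; the verification that $m_{k+1}(\partial P_0)=0$ reduces to checking that every $(k+2)$-subset of $[n_0]$ outside the facet list of $P_0$ contains one of the designated missing $k$-faces.

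For the inductive step, suppose $P$ is a neighborly $(2k+1)$-polytope on $n$ vertices with $m_{k+1}(\partial P)=0$. I would construct a polytope $P'$ on $n+1$ vertices with the same properties by performing a Shemer sewing of a new vertex $v$ onto a $2k$-ball $B\subset \partial P$ with $V(B)=V(\partial P)$ satisfying (a)~$\skel_k(\partial P)\subset B$ and (b)~$B$ has no interior faces of dimension $\leq k+1$ (equivalently, $B$ is $(k-2)$-stacked). Property (a) already forces $B$ to be $k$-neighborly and to contain every $k$-dimensional face of $\partial P$, while (b) forces $B$ to be $(k-1)$-stacked (in fact strictly stronger), so the Shemer hypotheses are satisfied and the sewn complex $\partial P'$ is again $k$-neighborly. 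A short case analysis of missing $(k+1)$-faces of $\partial P'$—separating those disjoint from $v$ (which can only arise from interior $(k+1)$-dim faces of $B$, killed by (b) and the inductive hypothesis $m_{k+1}(\partial P)=0$) from those containing $v$ (which can only arise from $k$-dim faces of $\partial P$ lying outside $B$, killed by (a))—then shows that $m_{k+1}(\partial P')=0$. Polytopal realizability of the sewing is ensured by placing $v$ beyond exactly the facets of $B$ in a generic convex realization of $P$, via a standard beyond/beneath argument.

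The main obstacle is the simultaneous existence of a ball $B\subset \partial P$ meeting (a) and (b) while admitting a polytopal sewing. Condition (b) is strictly stronger than the stackedness that Shemer sewing usually demands, and combined with (a) it constrains the combinatorics of $B$ sharply; the key leverage comes from the inductive hypothesis $m_{k+1}(\partial P)=0$, which forces the $k$-skeleton of $\partial P$ to be complete modulo the missing $k$-faces alone and thereby leaves enough room inside $\partial P$ to house a $(k-2)$-stacked ball spanning $\skel_k(\partial P)$. The parity restriction on $k$ for $k\geq 3$ most likely enters through the base case (via centrally symmetric Gale diagrams that behave cleanly only for odd $k$); constructing an analogous base polytope for even $k\geq 4$ appears to require different ideas, which accounts for the gap in the statement.
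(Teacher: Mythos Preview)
Your inductive step cannot work as stated: the ball $B$ satisfying (a) and (b) does not exist. Conditions (a) and (b) together force $\partial B$ (a $(2k-1)$-sphere on $n$ vertices) to contain every $k$-face of the $k$-neighborly $2k$-sphere $\partial P$, since all $k$-faces of $B$ lie on $\partial B$ by $(k-2)$-stackedness. But $f_k(\partial P)=f_k(C(2k+1,n))$, while $f_k(\partial B)\le f_k(C(2k,n))$ by the Upper Bound Theorem, and one checks directly that $f_k(C(2k+1,n))>f_k(C(2k,n))$ for $n\ge 2k+4$ (e.g.\ for $k=3$, $n=10$ one has $205>185$). For $k=2$ the situation is even worse: $(k-2)$-stacked means $0$-stacked, i.e.\ $B$ is a simplex, which cannot have $n\ge 9$ vertices. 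So the ``main obstacle'' you flag is not merely hard---it is an impossibility, and the argument collapses. There is also a circularity in your $k=2$ base case: the $m=0$ instance of Theorem~\ref{thm: neighborly 4-spheres} is proved \emph{by invoking} Theorem~\ref{thm: neighborly 2k-sphere m_k+1=0}, not the other way around.

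The paper's actual approach differs in an essential way. The ball $D_k$ on which one sews is only \emph{exactly $k$-stacked} (its minimal interior faces have dimension $k$, not $\ge k+2$) and only $(k-1)$-neighborly; the role of your condition (a) is played instead by the requirement that $D_k$ be \emph{induced on its $(k-1)$-skeleton} in $\partial P$. These weaker conditions are achievable, but constructing such a $D_k$ and showing the construction can be iterated requires carrying along a sequence of pairwise disjoint edges $e_1,\dots,e_k$ whose successive links satisfy delicate neighborliness and missing-face conditions (Lemmas~\ref{Claim 1}--\ref{Claim 4}). The base polytope $Q_k$ for odd $k$ is built from a Gale diagram of $k+2$ \emph{double} points at the vertices of a regular $(k+2)$-gon (not a centrally symmetric configuration---note $k+2$ is odd), and one must verify that $Q_k$ comes equipped with such an edge sequence. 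The parity restriction enters precisely here: the double-point Gale diagram yielding $m_{k+1}=0$ on $2k+4$ vertices exists only for odd $k$ (Proposition~\ref{Prop: vertex-minimal}).
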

The goal of this section is to settle Question \ref{question: m_{k+1} of neighborly 2k-spheres} in the case of $k=2$ except for a single value of $m$. Specifically, we prove the following result (cf.~Corollary \ref{cor:characterization of m for 2-spheres}).

\begin{theorem}\label{thm: neighborly 4-spheres}
	For any $n\geq 9$ and $0\leq m\leq \binom{n-5}{2}$ with $m\neq \binom{n-5}{2}-1$, there exists a neighborly PL $4$-sphere with $n$ vertices and $m_3=m$.
\end{theorem}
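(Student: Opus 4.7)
The plan is to address three cases for the value of $m$: the maximum $m=\binom{n-5}{2}$, the minimum $m=0$, and intermediate $1\leq m\leq \binom{n-5}{2}-2$. For $m=\binom{n-5}{2}$, take the boundary $\partial C(5,n)$ of the cyclic $5$-polytope; since $C(5,n)$ is neighborly and $2$-stacked, Corollary~\ref{cor: m-vectors of stacked spheres}(2) gives $m_3(\partial C(5,n))=\binom{n-5}{2}$. For $m=0$, invoke Theorem~\ref{thm: neighborly 2k-sphere m_k+1=0} with $k=2$, which yields, for every $n\geq 9$, a neighborly $5$-polytope with $n$ vertices whose missing faces are all of dimension $2$; its boundary is a neighborly PL $4$-sphere with $m_3=0$.

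For the intermediate range, the plan is to apply a sequence of Pachner bistellar $2$-flips starting from $\partial C(5,n)$. Recall that a bistellar $2$-flip in a $4$-dimensional complex is determined by a triangle $\sigma=\{a,b,c\}$ with $\lk(\sigma)=\partial \tau$ for some missing triangle $\tau=\{d,e,f\}$; the flip replaces $\sigma*\partial\tau$ by $\partial\sigma*\tau$, so that $\tau$ becomes a face and $\sigma$ becomes a missing triangle. By Pachner's theorem, the resulting complex is PL-homeomorphic to the original, and since no edge is created or destroyed, $2$-neighborliness is preserved throughout the iteration.

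A careful count of the $4$-subsets of $V(\Delta)$ whose status (face, non-face, or missing) changes under a $2$-flip shows that $m_3$ is altered by exactly $|B_\tau|-|A_\sigma|$, where
\[
A_\sigma=\bigl\{v\in V(\Delta)\setminus(\sigma\cup\tau):\{a,b,v\},\{a,c,v\},\{b,c,v\}\in\Delta\bigr\}
\]
and $B_\tau$ is defined analogously (with the roles of $\sigma$ and $\tau$ swapped). Thus each flip adjusts $m_3$ by an integer determined by local combinatorial data, and a judicious selection of $\sigma$ and $\tau$ lets us engineer flips with prescribed effects on $m_3$.

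Starting from $\partial C(5,n)$ with $m_3=\binom{n-5}{2}$, I would iteratively apply such flips to realize every target value of $m_3$ down to $1$, with Theorem~\ref{thm: neighborly 2k-sphere m_k+1=0} delivering $m_3=0$ separately. The two main obstacles are: (i) exhibiting, at each stage of the iteration, a valid pair $(\sigma,\tau)$ producing the exact change needed in $m_3$, which requires careful control of the structure of triangles and missing triangles in the evolving sphere---an abundance of such structures in a neighborly $4$-sphere with $n\geq 9$ vertices should make this feasible, though the verification is the heart of the argument; and (ii) clarifying the exclusion of $m=\binom{n-5}{2}-1$---this appears to reflect a rigidity of $2$-stacked spheres, in that any single $2$-flip from the boundary of the cyclic polytope seems to decrease $m_3$ by at least $2$, leaving the value $\binom{n-5}{2}-1$ just out of reach of local modifications of the $2$-stacked extreme.
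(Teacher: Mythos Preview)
Your treatment of the two extremes matches the paper: $m=\binom{n-5}{2}$ from $\partial C(5,n)$ and $m=0$ from Theorem~\ref{thm: neighborly 2k-sphere m_k+1=0}. Your local formula $|B_\tau|-|A_\sigma|$ for the effect of a $(3,3)$-flip on $m_3$ is also correct. The problem is that this is where your argument stops: you assert that ``a judicious selection of $\sigma$ and $\tau$'' produces flips with prescribed effect, and you explicitly flag that ``the verification is the heart of the argument.'' That heart is missing. Nothing in your proposal identifies even one flip on $\partial C(5,n)$ with $|B_\tau|-|A_\sigma|=-1$, nor a mechanism for maintaining such control through an iteration. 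In fact, for the natural flips on $\partial C(5,n)$ (those on triangles $\{1,j,n\}$), one computes drops of size $n-6$ or larger, not $1$; so a one-step descent from the maximum is not available by the obvious moves, and your heuristic about ``abundance of such structures'' is not a proof.

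The paper does \emph{not} attempt to hit every value by flips alone. It first applies a very specific sequence of flips to $\partial C(5,n)$ (on the triangles $\{1,i+1,n\}$, $i=2,\dots,n-6$) to produce spheres $\Delta_i$ with $m_3(\Delta_i)=\binom{n-4-i}{2}$---only the triangular numbers $1,3,6,\dots,\binom{n-5}{2}$ (Proposition~\ref{prop:Delta_i}). To fill in the gaps, it then \emph{sews} a new vertex onto explicit $2$-stacked shellable balls $B_k\subset\Delta_i$, obtaining $(n{+}1)$-vertex spheres $\Gamma^k_i$ with $m_3(\Gamma^k_i)=\binom{n-4-i}{2}+(n-k-4)$; varying $i$ and $k$ yields every value in $\{1,\dots,\binom{n-4}{2}-2\}\cup\{\binom{n-4}{2}\}$. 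The explicit description of $B_k$, its shelling, its minimal interior faces, and the check that sewing creates no unexpected missing $3$-faces are exactly the content you have replaced by a hope. If you want to salvage a flips-only proof, you would need to exhibit, for each target $m$, a concrete flip sequence and verify the $m_3$ count at each step---work comparable in difficulty to what the paper does with sewing.
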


In the case of $m=0$, the result follows from Theorem \ref{thm: neighborly 2k-sphere m_k+1=0}. To  prove the result for positive values of $m$, we start with the boundary complex of the cyclic $5$-polytope. In the first part of our construction, we apply to $\partial C(5,n)$ a sequence of bistellar flips that reduce $m_3$ but preserve neighborliness. A bistellar flip is defined as follows. 
If $\Delta$ is a PL $(d-1)$-sphere that contains an induced subcomplex $\overline{A}*\partial \overline{B}$, where $A$ is a $j$-subset of $V(\Delta)$ and $B$ is a $(d-j+1)$-subset of $V(\Delta)$ for some $1\leq j\leq d$, then one can perform a {\em bistellar flip} on $\Delta$ by replacing $\overline{A}*\partial \overline{B}$ with $\partial \overline{A}*\overline{B}$. (In this case we say that we apply the bistellar flip on the star of $A$.) The resulting complex is another PL $(d-1)$-sphere.\footnote{To give some examples of PL spheres, we notice that the boundary complex of any simplicial polytope is a PL sphere. So is any shellable sphere as well as the boundary complex of any shellable ball.} 

Assume that $n\geq 7$. Denote by $B(4, [2, n-1])$ the complex generated by the facets of $\partial C(4, n)$ of the form $\{i_1,i_1+1,i_2,i_2+1\}$, where $2\leq i_1<i_1+1<i_2< i_2+1\leq n-1$. It follows from the Gale evenness condition that $\partial B(4,[2,n-1])$ is the boundary complex of the cyclic $3$-polytope with vertex set $[2,n-1]:=\{2,3,\dots,n-1\}$ and that $\partial(\overline{1n}*B(4, [2, n-1]))=\partial C(5, n)$. In particular, the complex $\overline{1n}*B(4, [2,n-1])$ is a $2$-stacked $5$-ball. As such, it has no missing $3$-faces (see \cite[Theorem 2.3]{MuraiNevo2013}). Thus, all missing $3$-faces of $\partial C(5,n)$ are the minimal interior $3$-faces of $\overline{1n}*B(4, [2,n-1])$, that is, they are all of the form $\{1, n\}\cup H$, where $H$ is a missing edge of $\partial B(4, [2, n-1])$. In short, the set of missing $3$-faces of $\partial C(5,n)$ is given by $$M_1=\{\{1,i, j, n\}: 3\leq i, j\leq n-2, j-i\geq 2\}.$$

We now define a certain collection of PL $4$-spheres $\Delta_i$ for $1\leq i\leq n-6$.
\begin{definition}  \label{def:Delta_i}
	Let $n\geq 8$ and let $\Delta_1:=\partial C(5,n)$. For $2\leq i\leq n-6$, assume that $\Delta_{i-1}$ is already defined, that $\Delta_{i-1}$ is a  PL $4$-sphere with $n$ vertices,  and that $\st(\{1, i+1, n\}, \Delta_{i-1})=\partial \overline{\{2,i+2,n-1\}}*\overline{\{1,i+1,n\}}$ is an induced subcomplex of $\Delta_{i-1}$. Define $\Delta_i$ as the complex obtained from $\Delta_{i-1}$ by applying the bistellar flip on $\st(\{1,i+1,n\}, \Delta_{i-1})$. Also, for $1\leq j\leq n-6$, define $M_j$ as the set of missing $3$-faces of $\Delta_j$.
\end{definition}

To justify this definition, we inductively prove the following result.

\begin{proposition} \label{prop:Delta_i}
	For $2\leq i\leq n-6$, the complex $\Delta_i$ is well-defined, and it is a neighborly PL $4$-sphere with $n$ vertices. Furthermore, if we let $S_{i-1}=\{\{1, i+1,j, n\}: i+3\leq j\leq n-2\}$, then $M_i=M_{i-1}\backslash S_{i-1}$. In particular, $m_3(\Delta_i)=\binom{n-4-i}{2}$ for all $1\leq i\leq n-6$.
\end{proposition}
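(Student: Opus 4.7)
The plan is to proceed by induction on $i$. The base case $i = 1$ is immediate: $\Delta_1 = \partial C(5, n)$ is a neighborly simplicial $4$-sphere with $n$ vertices, and $m_3(\Delta_1) = |M_1| = \binom{n-5}{2}$ by direct count from the description of $M_1$ given before the proposition. To bootstrap the induction, one also needs to verify, via the Gale evenness condition, that the three facets of $\partial C(5,n)$ containing $\{1,3,n\}$ are exactly $\{1,2,3,4,n\}$, $\{1,2,3,n-1,n\}$, and $\{1,3,4,n-1,n\}$, and that $\{2,4,n-1\}$ is the unique missing face of $\Delta_1$ inside $\{1,2,3,4,n-1,n\}$; this yields $\st(\{1,3,n\}, \Delta_1) = \partial\overline{\{2,4,n-1\}}*\overline{\{1,3,n\}}$ as an induced subcomplex, enabling the first flip.

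For the inductive step, assume $\Delta_{i-1}$ is a neighborly PL $4$-sphere with $n$ vertices, $M_{i-1} = M_1 \setminus \bigcup_{j=1}^{i-2} S_j$, and that the induced star condition at $\{1,i+1,n\}$ holds. Then the bistellar flip at $A = \{1,i+1,n\}$, $B = \{2,i+2,n-1\}$ is applicable, and since bistellar flips preserve PL-sphereness and (as $|A|,|B|\geq 2$) the 1-skeleton, $\Delta_i$ is again a neighborly PL $4$-sphere with $n$ vertices. To establish $M_i = M_{i-1} \setminus S_{i-1}$, one observes that the sets $S_j$ are pairwise disjoint, so $S_{i-1} \subseteq M_{i-1}$; the elements of $S_{i-1}$ all contain $A$, which becomes a non-face after the flip, so they cease to be minimal non-faces; no element of $M_{i-1}\setminus S_{i-1}$ becomes a face, because every element of $M_{i-1}\subseteq M_1$ contains both $1$ and $n$, whereas the only 4-subsets that gain face-status through the flip are the three of the form $B\cup\{v\}$ for $v \in A$, each of which omits at least one of $\{1,n\}$; and, most subtly, no genuinely new missing 3-face appears in $\Delta_i$, since any candidate $B\cup\{v\}$ with $v \notin A\cup B$ has some 3-subset among $\{2,i+2,v\}$, $\{2,n-1,v\}$, $\{i+2,n-1,v\}$ that remains a non-face, as follows from a case analysis of Gale evenness for $\partial C(5,n)$.

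The main obstacle is re-establishing the induced star condition at $\{1,i+2,n\}$ in $\Delta_i$, which propagates the inductive hypothesis. Using the characterization of facets of $C(5,n)$ containing both $1$ and $n$ as $\{1,i_2,i_3,i_4,n\}$ with $\{i_2,i_3,i_4\}$ Gale-even in $\{2,\ldots,n-1\}$, one enumerates exactly four facets of $\Delta_1$ containing $\{1,i+2,n\}$ for the relevant range of $i$: $\{1,2,i+1,i+2,n\}$, $\{1,2,i+2,i+3,n\}$, $\{1,i+1,i+2,n-1,n\}$, and $\{1,i+2,i+3,n-1,n\}$. A bookkeeping check shows that none of these four are altered by the flips at steps $2,\ldots,i-1$ (each such flip affects only facets containing $\{1,j+1,n\}$ for $j \leq i-1$), while step $i$ removes the two facets containing $A$ and adds the new facet $\{1,2,i+2,n-1,n\}$, leaving exactly the three facets of $\partial\overline{\{2,i+3,n-1\}}*\overline{\{1,i+2,n\}}$. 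To confirm this subcomplex is induced, one verifies that $\{2,i+3,n-1\}$ is not a face of $\Delta_i$ (by Gale evenness at the base, and the fact that no flip up to step $i$ adds $\{2,i+3,n-1\}$ or any facet containing it, since $i+3$ lies outside the range $\{1, 2, j+1, j+2, n-1, n\}$ for $j \leq i$) and that the three identified facets jointly cover the other nineteen 3-subsets of $\{1,2,i+2,i+3,n-1,n\}$ (by inclusion-exclusion). Finally, $m_3(\Delta_i) = \binom{n-5}{2} - \sum_{\ell=1}^{i-1}(n-\ell-5) = \binom{n-4-i}{2}$ follows by a direct computation.
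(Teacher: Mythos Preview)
Your proof is correct and follows essentially the same inductive strategy as the paper. The only difference is bookkeeping: the paper carries the full list of facets of $\lk(\{1,n\},\Delta_i)$ through the induction and reads off the star of $\{1,i+2,n\}$ directly from that description, whereas you enumerate the facets of $\Delta_1$ containing $\{1,i+2,n\}$ and track their fate through the sequence of flips; both routes yield the same verification.
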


\begin{proof}
	To start, note that the first part of the statement holds for $i=2$. Indeed, $\st(\{1,3,n\}, \Delta_1)=\partial \overline{\{2,4,n-1\}}*\overline{\{1,3,n\}}$ is an induced subcomplex of $\Delta_1$, and so $\Delta_2$ is well-defined. Since $\Delta_2$ is obtained from a neighborly PL $4$-sphere by a flip that does not affect the set of edges, it is also a neighborly PL $4$-sphere. 
	
	The set of facets of $\lk(1n, \Delta_1)$ consists of $\{2,3,n-1\}, \{2,n-2, n-1\},$ and $\{2, j, j+1\}, \{j, j+1, n-1\}$ for $3\leq j\leq n-3$. Assume inductively that the set of facets of $\lk(1n, \Delta_i)$ consists of $\{2,i+2,n-1\}, \{2,n-2, n-1\},$ and $\{2, j, j+1\}, \{ j, j+1, n-1\}$ for $i+2\leq j\leq n-3$, and that $M_i=M_{1}\backslash (\cup_{1\leq k\leq i-1} S_k).$  The first assumption guarantees that $\st(\{1,i+2,n\}, \Delta_{i})=\partial \overline{\{2,i+3, n-1\}}*\overline{\{1,i+2,n\}}$. Furthermore, $\{2, i+3, n-1\}$ is a missing $2$-face of $\Delta_1$, and it was not added as a $2$-face in any of the bistellar flips we performed to get from $\Delta_1$ to $\Delta_i$. Hence  $\st(\{1,i+2,n\}, \Delta_{i})$ is an induced subcomplex of $\Delta_i$. Thus $\Delta_{i+1}$ is well-defined. In particular, $\lk(1n, \Delta_{i+1})$ is obtained from $\lk(1n,\Delta_i)$ by replacing $(i+2)*\partial \overline{\{2, i+3, n-1\}}$ with $\overline{\{2, i+3, n-1\}}$. Hence the facets of $\lk(1n, \Delta_{i+1})$ are $\{2,i+3,n-1\}, \{2,n-2, n-1\},$ and $\{2, j, j+1\}, \{j, j+1, n-1\}$ for $i+3\leq j\leq n-3$. All missing $3$-faces of $\Delta_i$ containing $\{1, i+2, n\}$ --- this is precisely the set $S_i$ --- are no longer missing $3$-faces of $\Delta_{i+1}$. Furthermore, the newly added $2$-face $\{2, i+3, n-1\}$ has not created any missing $3$-faces (as the only vertices of $\Delta_{i+1}$ whose link contains $\partial \overline{\{2,i+3, n-1\}}$ are $1$, $i+2$, and $n$). This proves that $M_{i+1}=M_{1}\backslash (\cup_{1\leq k\leq i} S_k).$
	
	Finally, $m_3(\Delta_i)=|M_1|-\sum_{1\leq k\leq i-1}|S_k|=\binom{n-5}{2}-\sum_{1\leq k\leq i-1}(n-5-k)=\binom{n-4-i}{2}$. 
\end{proof}
\begin{remark}
	We can further apply the bistellar flip on $\st(\{1, n-4, n\}, \Delta_{n-6})$ to obtain $\Delta_{n-5}$. The new sphere $\Delta_{n-5}$ has exactly one missing $3$-face, namely $\{2, n-3, n-2, n-1\}$. (It is not in $M_1$.) We can even apply the bistellar flip on $\st(1n, \Delta_{n-5})$, but the resulting complex is no longer neighborly.
\end{remark}

\begin{remark} \label{rem:Delta_i}
While we will not use the following result in this paper, it is worth remarking that Definition \ref{def:Delta_i} and Proposition \ref{prop:Delta_i} can be extended to higher dimensions. The simplest extension is as follows. For a fixed $k\geq 3$, take $\Delta^{2k}_1:=\partial C(2k+1,n)$, where $n$ is large enough, and then for $2\leq i\leq n-2k-2$, define  $\Delta^{2k}_i$ as the complex obtained from $\Delta^{2k}_{i-1}$ by applying the bistellar flip on $\st(\{1,3, \ldots, 2k-3\}\cup\{2k-3+i, n\}, \Delta^{2k}_{i-1})$. Then the complex $\Delta^{2k}_i$ is well-defined. Furthermore, it is a neighborly PL $2k$-sphere with $n$ vertices and $m_{k+1}={n-k-3 \choose k}-\sum_{1\leq \ell \leq i-1} (n-2k-1-\ell)$. The proof is very similar to that of Proposition \ref{prop:Delta_i}.

It is also not hard to see that $\Delta^{2k}_i$ is the boundary complex of the $(2k+1)$-ball 
$$D^k_i=\left(\overline{1n}*B(2k, [2,n-1])\right) \cup \left(\overline{\{1,2,n-1,n\}}* C_i\right).$$
Here $B(2k, [a, b])$ is the $(2k-1)$-ball generated by the following collection of facets of $\partial C(2k,n)$:
$$\{\{i_1, i_1+1, i_2, i_2+1, \ldots, i_k, i_k+1\} \ : \ a\leq i_1, \ i_k+1\leq b,\mbox{ and } \forall 1\leq j\leq k-1, \ i_j\leq i_{j+1}-2 \; \},$$
$C_1$ is the void complex, and for $i\geq 2$, $C_i$  is the stacked $(2k-2)$-ball whose set of facets is
$$\{\{3,4,\ldots,2k-2,2k-2+j-1, 2k-2+j\} \ : \ 2\leq j\leq i\}.$$
The ball $C_i$ belongs to the family of squeezed balls introduced in \cite{Kal}. More generally, one could consider complexes of the form $D^k(T):=\overline{1n}*B(2k, [2,n-1]) \cup \overline{\{1,2,n-1,n\}}*T$, where $T$ ranges over squeezed $(2k-3)$-balls with vertex set $[3,p]$ for some $p\leq n-2$. It would be interesting to determine for which squeezed balls $T$, $D^k(T)$ is a PL ball whose boundary is a neighborly sphere. It would also be interesting to determine the number of missing $(k+1)$-faces that $\partial D^k(T)$ has. 
\end{remark}

To prove Theorem \ref{thm: neighborly 4-spheres}, we need one more definition. We say that
a pure $(d-1)$-dimensional simplicial complex $\Delta$ is {\em shellable} if there is a linear order $F_1, F_2, \dots, F_k$ of the facets of $\Delta$ such that for all $2\leq i\leq k$, the subcomplex $\overline{F_i} \cap (\cup_{j<i} \overline{F_j})$ of $\partial\overline{F_i}$ is pure $(d-2)$-dimensional. Such order of facets is  called a {\em shelling order}. The unique minimal face of $\partial\overline{F_i}$ that is not in $\cup_{j<i} \overline{F_j}$ is called the {\em restriction face of $F_i$}. If $\Gamma$ is a PL $(d-1)$-sphere and $B\subset \Gamma$ is a shellable $(d-1)$-ball, then the complex obtained from $\Gamma$ by replacing $B$ with the cone over $\partial B$ is again a PL $(d-1)$-sphere.

{\smallskip\noindent {\it Proof of Theorem \ref{thm: neighborly 4-spheres}: \ }}
Fix $n\geq 8$. Recall that for $1\leq i\leq n-6$, $\Delta_i$ is a neighborly PL $4$-sphere with $n$ vertices and with $\binom{n-4-i}{2}$ missing $3$-faces. To construct a neighborly PL $4$-sphere with $n+1$ vertices and any value of $m_3$ in $\big\{1,2,\dots, \binom{n-4}{2}-2, \binom{n-4}{2}\big\}$, let $2\leq k\leq n-4$, and consider the complex $B_k$ generated by the facets 
\begin{eqnarray*} &&\{1,2,3,4,5\}, \{1,3,4,5,6\}, \{1,4,5,6,7\}, \dots, \{1, n-4,n-3,n-2,n-1\}, \\
	&&\{2,3,4,5,n\}, \{3,4,5,6,n\}, \dots,\{k,k+1, k+2, k+3, n\}.\end{eqnarray*}
The above ordering is a shelling of $B_k$; furthermore, each restriction face has size $\leq 2$. (The list of restriction faces consists of $\emptyset$, followed by vertices $6, 7, \dots, n$,  followed by edges $6n,7n,\dots,(k+3)n$, where the last part is empty if $k=2$.) Thus, $B_k$ is a $2$-stacked PL $4$-ball.

The ball $B_k$ is a subcomplex of $(\partial \overline{1n})*B(4, [2, n-1])$, which in turn is a subcomplex of $\Delta_i$. (Indeed, $(\partial \overline{1n})*B(4, [2, n-1])$ is a subcomplex of $\Delta_1$, and all the bistellar flips performed to get from $\Delta_1$ to  $\Delta_i$ only affected the open star of $1n$, so they did not touch this subcomplex.) Also, from the list of facets of $B_k$, we see that the minimal interior faces of $B_k$ are 
\begin{eqnarray*}&&\{3,4,5\}, \{4,5,6\}, \dots, \{k,k+1, k+2\}, \\
	&&\{1, k+1, k+2, k+3\}, \{1,k+2, k+3, k+4\}, \dots, \{1, n-4, n-3, n-2\},
\end{eqnarray*} if $k\geq 3$, and 
$$\{2,3,4,5\}, \{1,3,4,5\}, \{1,4,5,6\}, \dots, \{1,n-4, n-3,n-2\}$$ if $k=2$. That is, there are $n-4-k$ such $3$-faces if $k\geq 3$ and $n-5$ if $k=2$.

Let $\Gamma^k_i$ be obtained from $\Delta_i$ by replacing $B_k\subset \Delta_i$ with $\partial B_k*(n+1)$. Then $\Gamma^k_i$ is a neighborly PL $4$-sphere $\Gamma^k_i$ with $n+1$ vertices. (The $2$-neighborliness follows from the fact that $B_k$ is $2$-stacked and $V(B_k)=V(\Delta_i)$.) Since all missing $3$-faces of $\Delta_i$ contain both $1$ and $n$, they remain missing $3$-faces of $\Gamma^k_i$. Furthermore, the minimal interior faces of $B_k$ become ``new'' missing faces of $\Gamma^k_i$. Each other missing $3$-face of $\Gamma^k_i$ must be of the form $(n+1)\cup F$, where $F$ is  simultaneously a $2$-face of $\Delta_i$ and a {\em missing} $2$-face of 
of $B_k$. However no such $F$ exists. Indeed, if $F$ is a missing $2$-face of $B_k$, then the shelling of $B_k$ implies that $F$ contains a restriction edge, that is, $F=\{a <b <n\}$ for some $6\leq b\leq k+3$. Here $an$ is an edge, and so we see from the collection of facets of $B_k$ that $a\neq 1$. Since $ab$ is also an edge, it then follows that $b-a\leq 3$. This forces $F$ to be a $2$-face of $B_k$.

We conclude that $m_3(\Gamma^k_i)=m_3(\Delta_i)+(n-k-4)=\binom{n-4-i}{2}+(n-k-4)$ for $3\leq k\leq n-4$, and $m_3(\Gamma^2_i)=m_3(\Delta_i)+n-5=\binom{n-4-i}{2}+(n-5)$. Thus,
\begin{eqnarray*}
	\{m_3(\Gamma^k_i) : 1\leq i\leq n-6, 2\leq k\leq n-4\}&=&\left\{{m \choose 2}+s : 2\leq m\leq n-5, s\in [0,n-7]\cup\{n-5\}\right\} \\
	&=&\left\{1, 2, \dots, {n-4 \choose 2}-2\right\}\cup \left\{{n-4 \choose 2}\right\}.
\end{eqnarray*}
To complete the proof, it only remains to use Theorem \ref{thm: neighborly 2k-sphere m_k+1=0}, which asserts the existence of a  neighborly polytopal (and hence PL) $4$-sphere with $n+1\geq 9$ vertices and $m_3=0$.
\hfill$\square$\medskip

\begin{remark} \label{rem:5-polytopes-small}
	The cyclic polytope $C(5,7)$ is the only neighborly $5$-polytope with $7$ vertices; it has $m_3=1$. Looking at the list of all neighborly $5$-polytopes with $8$ vertices, one can check that the only possible values of $m_3$ are $1$ and $3$, that is, no such polytope has $m_3=2={3 \choose 2}-1$. Similarly, neighborly $5$-polytopes with $9$ vertices can have $m_3\in \{0,1,2,3,4,6\}$. (The list of all neighborly $5$-polytopes with $9$ vertices can be found in \cite{Finbow}.) At present, we do not know if there exist neighborly $4$-spheres with $n\geq 9$ vertices and $m_3={n-5 \choose 2}-1$. 
\end{remark}

In view of Corollary \ref{cor:characterization of m for 2-spheres}, Theorem \ref{thm: neighborly 4-spheres}, and Remark \ref{rem:5-polytopes-small} we posit the following conjecture.

\begin{conjecture} \label{conj:m_{k+1} not equal to max minus 1}
	Let $k\geq 2$ and let $\Delta$ be a neighborly $2k$-sphere with $n$ vertices. Then $m_{k+1}(\Delta)\neq \binom{n-k-3}{k}-1$. Furthermore, for $k\geq 3$, $n$ sufficiently large, and for any $0\leq m\leq \binom{n-k-3}{k}$ where $m\neq \binom{n-k-3}{k}-1$, there exists a neighborly $2k$-sphere with $n$ vertices and $m_{k+1}=m$.
\end{conjecture}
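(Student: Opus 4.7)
For the first (non-existence) assertion, a natural starting point is to try to generalize the cut-and-count argument from Corollary \ref{cor:characterization of m for 2-spheres}. That argument exploits the fact that in a $3$-polytope, a missing $2$-face spans a hyperplane (dimension one less than the polytope), so each missing face literally slices the polytope in two. This feature disappears for $k\geq 2$, where a missing $(k+1)$-face in a $(2k+1)$-polytope spans only a $(k+1)$-dimensional affine subspace, far from a hyperplane. My plan is therefore to seek a substitute obstruction. The most promising approach is to analyze vertex links: by Lemma \ref{lm: stacked-links}, every vertex link of a polytopal neighborly $(2k)$-sphere $\Delta=\partial P$ is $(k-1)$-stacked with $m_k(\lk(v))=\binom{n-k-3}{k-1}$, and the missing $(k+1)$-faces of $\Delta$ are distributed across vertex links by the double count $\sum_v m_{k+1}(\lk(v))=(k+2)\,m_{k+1}(\Delta)$. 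If $m_{k+1}(\Delta)=g_k-1$, a careful comparison between this sum and the structure imposed on each individual link by $(k-1)$-stackedness should reveal an arithmetic impossibility, because the ``single missing missing-face'' cannot be distributed evenly across all $n$ links. A complementary algebraic approach would examine the graded Betti numbers of the Stanley--Reisner ring of $\Delta$: the condition $m_{k+1}=g_k-1$ prescribes a specific dimension count for the minimal free resolution that Gorenstein/Koszul duality should rule out.

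For the existence assertion, the plan mimics the proof of Theorem \ref{thm: neighborly 4-spheres}. Starting from $\partial C(2k+1,n)$, which has $m_{k+1}=\binom{n-k-3}{k}$, I would apply bistellar flips on carefully chosen $(k+1)$-faces inside the initial $k$-stacked ball $\overline{1n}*B(2k,[2,n-1])$, generalizing Proposition \ref{prop:Delta_i}. Each flip should preserve $k$-neighborliness and remove a predictable number of minimal interior $(k+1)$-faces, producing neighborly $2k$-spheres $\Delta_1,\Delta_2,\ldots$ realizing the discrete sequence $m_{k+1}(\Delta_i)=\binom{n-k-2-i}{k}$. To fill in the gaps, I would combine each $\Delta_i$ with the cone-over-shellable-sub-ball operation: replace a shellable $k$-stacked sub-ball $B\subset\Delta_i$ by $\partial B*(n+1)$, which introduces one vertex and exactly as many new missing $(k+1)$-faces as $B$ has minimal interior $(k+1)$-faces. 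Varying both the flip index and the sub-ball should realize every integer $m\in[1,\binom{n-k-3}{k}]\setminus\{\binom{n-k-3}{k}-1\}$, while the value $m_{k+1}=0$ is handled by Theorem \ref{thm: neighborly 2k-sphere m_k+1=0} for odd $k\geq 3$.

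I expect the non-existence part to be the main obstacle, precisely because the cut-and-count argument that works for $d=3$ has no direct higher-dimensional analogue, and because the link-based and algebraic approaches both require substantive new technical input. A related obstacle on the existence side is that Theorem \ref{thm: neighborly 2k-sphere m_k+1=0} provides the $m_{k+1}=0$ endpoint only for odd $k$; handling even $k\geq 4$ will require a new construction of a neighborly $(2k)$-sphere with all missing faces of dimension $k$, perhaps via a suitable generalization of Shemer's sewing. A secondary concern is whether the cone construction is flexible enough to bridge every gap between $\binom{n-k-2-i}{k}$ and $\binom{n-k-3-i}{k}$; since these gaps grow polynomially in $n$ while sub-balls with a wide range of minimal-interior-face counts exist inside each $\Delta_i$, the construction should succeed for $n$ sufficiently large, matching the ``$n$ sufficiently large'' quantifier in the conjecture.
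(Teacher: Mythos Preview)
The statement you are attempting to prove is labeled a \emph{conjecture} in the paper and is not proved there; the paper offers only partial evidence (the remark immediately following the conjecture, which handles the case $n\leq 2k+4$ via Gale diagrams). What you have written is accordingly not a proof but a research plan, and you explicitly flag its main obstacles yourself. That is appropriate given the status of the statement, but it should not be presented as a proof.

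Beyond this, your plan for the non-existence part contains concrete errors. You invoke Lemma~\ref{lm: stacked-links} to claim that every vertex link of a neighborly $(2k+1)$-polytope is $(k-1)$-stacked with $m_k(\lk(v))=\binom{n-k-3}{k-1}$, but that lemma, applied with $d=2k+1$, gives only that $\lk(v)$ is $(\lceil d/2\rceil-1)$-stacked, i.e., $k$-stacked; the formula $m_k(\lk(v))=\binom{n-k-3}{k-1}$ comes from Theorem~\ref{non-polytopal} and pertains to vertex links of neighborly $2k$-polytopes (that is, to $(2k-2)$-spheres), not to the $(2k-1)$-spheres arising here. Your double count $\sum_v m_{k+1}(\lk(v))=(k+2)\,m_{k+1}(\Delta)$ is also not correct as stated: a missing $(k+1)$-face $F$ of $\Delta$ and a vertex $v\in F$ produce a missing $k$-face $F\setminus v$ of $\lk(v)$, not a missing $(k+1)$-face; and there is no reason the map is a bijection in either direction without further argument. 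Since the entire arithmetic-obstruction strategy rests on these two ingredients, the non-existence part currently has no foundation.

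Your existence outline is a plausible extension of the proof of Theorem~\ref{thm: neighborly 4-spheres}, but none of the steps (that the analogous bistellar flips preserve $k$-neighborliness, that the count of destroyed missing $(k+1)$-faces is as claimed, that suitable shellable sub-balls exist with every required number of minimal interior faces) is verified, and you note yourself that the $m_{k+1}=0$ endpoint is unavailable for even $k\geq 4$. In short, both halves of the conjecture remain open, consistent with the paper's treatment.
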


\begin{remark} As an additional evidence in support of the first part of Conjecture \ref{conj:m_{k+1} not equal to max minus 1},
	one can use Gale diagrams and arguments similar to those used in Section 7.2 below to show that for all $k\geq 2$, no neighborly $2k$-sphere with $n\leq 2k+4$ vertices has $m_{k+1}= \binom{n-k-3}{k}-1$.\footnote{By a result of Mani \cite{Mani1972}, all simplicial $2k$-spheres with  $\leq 2k+4$ vertices are polytopal.}  We omit the proof.
\end{remark}

\section{Neighborly $(2k+1)$-polytopes with $m_{k+1}=0$}
This section is devoted to proving Theorem \ref{thm: neighborly 2k-sphere m_k+1=0}.  Our proof consists of two parts.
First, for odd $k$ and $k=2$, we use Gale diagrams to construct  
neighborly $(2k+1)$-polytopes with few vertices and $m_{k+1}=0$ (see Section 7.2). We then recursively apply sewing to generate an infinite family of polytopes with the desired properties (see Section 7.3). We begin with a review of Gale diagrams (Section 7.1). We refer the reader to \cite{Gru-book,McMShep,Ziegler} for additional background on this fascinating topic.

\subsection{Gale diagrams}
Let $V=\{\mathbf{p}_1, \dots, \mathbf{p}_n\}$ be a set of points in $\R^d$ whose affine dimension is $d$. Let $D$ be the following $n\times (d+1)$ matrix
$$D=\begin{bmatrix}
	p_{1,1} & p_{1,2} & \dots & p_{1, d} & 1 \\
	p_{2,1} & p_{2,2} & \dots & p_{2, d} & 1 \\
	\dots & \dots & \dots & \dots & \dots \\
	p_{n,1} & p_{n,2} & \dots & p_{n, d} & 1 \\
\end{bmatrix},$$
where $\mathbf{p}_i=(p_{i,1}, \dots, p_{i, d})$. The space of affine dependences of $V$ has dimension $n-d-1$; let $\{\mathbf{a}_1, \dots, \mathbf{a}_{n-d-1}\}$ be a basis of this space. In particular, for each $\mathbf{a}_i=(a_{1,i}, \dots, a_{n,i})$, we have $\sum_{j=1}^n a_{j, i}\mathbf{p}_j=\mathbf{0}$ and $\sum_{j=1}^n a_{j, i}=0$. Let $\tilde{D}$ be the matrix whose column vectors are given by $\mathbf{a}_i^T$:
$$\tilde{D}=\begin{bmatrix}
	a_{1,1} & a_{1,2} & \dots & a_{1, n-d-1} \\
	a_{2,1} & a_{2,2} & \dots & a_{2, n-d-1} \\
	\dots & \dots & \dots & \dots  \\
	a_{n,1} & a_{n,2} & \dots & a_{n, n-d-1} \\
\end{bmatrix}.$$
Denote the $j$th row of $\tilde{D}$ by 
$\mathbf{\tilde{p}}_j=(a_{j,1}, \dots, a_{j, n-d-1})$. 

The (multi)set $\tilde{V}=\{\mathbf{\tilde{p}}_1, \dots,\mathbf{\tilde{p}}_n\}\subset \R^{n-d-1}$ is called the {\em Gale transform} of $V$. The \emph{Gale diagram} $\hat{V}$ of $V$ is defined by $\hat{V}=\{\mathbf{\hat{p}}_1, \dots, \mathbf{\hat{p}}_n\}$, where $\mathbf{\hat{p}}_i=\mathbf{0}$ if $\mathbf{\tilde{p}}_i=\mathbf{0}$ and $\mathbf{\hat{p}}_i=\mathbf{\tilde{p}}_i/\|\mathbf{\tilde{p}}_i\|$ otherwise. In particular, $\hat{V}$ is a subset of the unit $(n-d-2)$-sphere in $\R^{n-d-1}$ together with the origin. For $F\subset V$ we denote by $\tilde{F}$ and $\hat{F}$ the (multi)sets $\{\mathbf{\tilde{p}}_i: \mathbf{p}_i\in F\}$ and $\{\mathbf{\hat{p}}_i: \mathbf{p}_i\in F\}$, respectively.

Assume that $V$ is the vertex set of a $d$-polytope $P$. The main property of the Gale transforms and diagrams of polytopes (see \cite[Section 5.4]{Gru-book}) is that $F$ is the vertex set of a proper face of $P$ if and only if $\mathbf{0}\in \relint\conv(\tilde{V}\backslash \tilde{F})$, which happens if and only if $\mathbf{0}\in \relint\conv(\hat{V}\backslash \hat{F})$. 

Of a special interest to us is the case when $P$ is a {\em simplicial}  $d$-polytope and $|V|=d+3$. In this case, the origin is not in $\hat{V}$, and it is also not on any line segment connecting two points of $\hat{V}$. Hence, the Gale diagram of $P$ is a subset (possibly a multiset) of the unit circle $\Sp^1\subset \R^2$ centered at the origin, and no diameter of $\Sp^1$ that has a point of $\hat{V}$ at one of its ends can have a point of $\hat{V}$ at its other end. Such diameters of $\Sp^1$ are called the {\em diameters through $\hat{V}$}.  In our illustrations of the Gale diagram of $P$ we depict the points of $\hat{V}$ as black dots (with appropriate multiplicities) lying along $\Sp^1$ and we also draw the diameters through $\hat{V}$; see Figure \ref{figure 1} for an example. 

By \cite[Section 6.3]{Gru-book} and \cite[Section 3.3]{McMShep}, if $\hat{V}$ is the Gale diagram of $P$ as above, then applying the following two operations to $\hat{V}$ produces the Gale diagram of a polytope that is combinatorially equivalent to $P$. The two operations are: (1)  moving points of $\hat{V}$ along $\Sp^1$ as long as we do not alter the order of diameters through $\hat{V}$, and (2) merging two {\em adjacent} points of $\hat{V}$ together if they are not separated by any other diameter through $\hat{V}$. When applying the latter operation, the multiplicity of the resulting point of the Gale diagram is the sum of the multiplicities of the two merged points.

\subsection{Vertex-minimal constructions} 
What is the smallest number of vertices that a neighborly $2k$-sphere with $m_{k+1}=0$ can have (assuming such sphere exists)? It is known that any simplicial $(d-1)$-sphere with $d+2$ vertices is the join of the boundary complexes of two simplices whose dimensions add up to $d$.  Thus when $d=2k+1$, such a sphere $\Delta$ is neighborly if and only if it is of the form $\partial \sigma^k *\partial\sigma^{k+1}$, in which case $m_{k+1}(\Delta)=1$. It follows that any neighborly $2k$-sphere with $m_{k+1}=0$ must have at least $2k+4$ vertices.

We will now show that if $k$ is odd, that is, $k=2i-1\geq 3$, then there exists a neighborly $(2k+1)$-polytope with $m_{k+1}=0$ that has exactly $2k+4=4i+2$ vertices. (When $k=1$, the octahedron is a flag $3$-polytope with $2k+4=6$ vertices.)
 
\begin{definition}\label{Q_i}
	Let $k=2i-1\geq 1$. Consider a regular $(2i+1)$-gon inscribed in the unit circle with vertices labeled $z_0,z_1, z_{-1}\dots, z_i, z_{-i}$  in the order as in Figure \ref{figure 1} (that is, index $j$ corresponds to the angle $2\pi j/(2i+1)$ between $z_j$ and the positive direction of the $x$-axis). For each $j$, place two points, denoted $\mathbf{\hat{x}}_j$ and $\mathbf{\hat{y}}_j$, at vertex $z_j$. Let $Q_k$ be the $(2k+1)$-polytope with vertex set $V=\{\mathbf{x}_\ell, \mathbf{y}_\ell: -i\leq \ell \leq i, \,\ell \in \mathbb{Z}\}$ whose Gale diagram is given by 
	$C_k:=\hat{V}=\{\mathbf{\hat{x}}_\ell, \mathbf{\hat{y}}_\ell: -i\leq \ell \leq i, \,\ell \in \mathbb{Z}\}$.
	 
\end{definition} For instance, $Q_1$ is an octahedron (cf.~\cite[Figure 6.3.1]{Gru-book}), and the boundary complex of $Q_3$ is the vertex transitive triangulation $6\_10\_23\_1$ from \cite{manifold_page}.
\begin{figure}[ht]
	\centering
	\begin{tikzpicture}
		\draw[thick,red!90!black] (0,0) circle (3cm);
		
		\node[draw=none,minimum size=2cm,regular polygon,regular polygon sides=10] (a) {};
		\foreach \a in {1,2,...,5}
		{\filldraw[black] (\a*72-1.5: 3cm) circle (2pt);
			\filldraw[black] (\a*72+1.5: 3cm) circle (2pt);
			\filldraw[black] (289.5: 3cm) circle (2pt);
			\filldraw[black] (286.5: 3cm) circle (2pt);
			\draw (0: 3.6cm) node{$\mathbf{\hat{x}}_{0}, \mathbf{\hat{y}}_{0}$};
			\draw (72: 3.25cm) node{$\mathbf{\hat{x}}_{1}, \mathbf{\hat{y}}_{1}$};
			\draw (144: 3.5cm) node{$\mathbf{\hat{x}}_{2}, \mathbf{\hat{y}}_{2}$};
			\draw (216: 3.5cm) node{$\mathbf{\hat{x}}_{-2}, \mathbf{\hat{y}}_{-2}$};
			\draw (288: 3.3cm) node{$\mathbf{\hat{x}}_{-1}, \mathbf{\hat{y}}_{-1}$};
			\draw (\a*72: 3cm)--(\a*72+180: 3cm);}
	\end{tikzpicture}
	\caption{The Gale diagram of $Q_3$}\label{figure 1}
\end{figure}
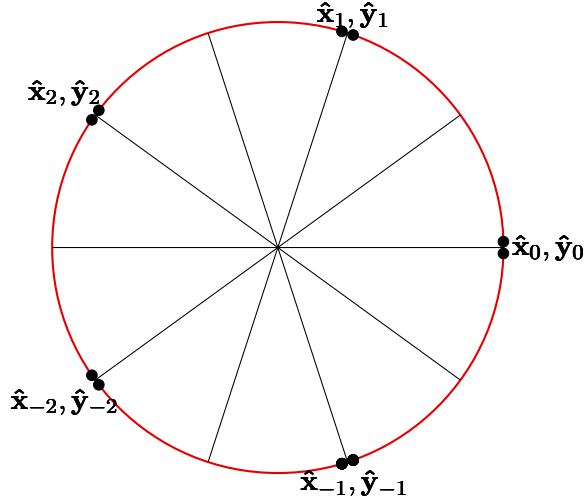

\begin{proposition} \label{prop:neighborliness-of-Q_i}
	The polytope $Q_k$ is a simplicial $(2k+1)$-polytope with $2k+4$ vertices; it is neighborly and all of its missing faces have dimension $k$. 
\end{proposition}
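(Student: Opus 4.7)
The plan is to read off the facial structure of $Q_k$ from its Gale diagram $C_k$ via the standard criterion that $F\subseteq V$ is a proper face of $Q_k$ if and only if $\mathbf 0\in\relint(\conv(\hat V\setminus \hat F))$. By the definition of $Q_k$, the multiset $C_k\subset\Sp^1\subset\R^2$ consists of $2k+4$ points placed with multiplicity $2$ at each of the $2i+1=k+2$ vertices of a regular $(k+2)$-gon. Two structural features will drive the argument: no point of $C_k$ lies at the origin, and, since $k+2$ is odd, no two distinct vertex positions are antipodal.

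First I would check that $Q_k$ is a simplicial $(2k+1)$-polytope on $2k+4$ vertices. Since the Gale diagram lives in $\R^{n-d-1}=\R^2$ with $n=2k+4$, this forces $d=2k+1$, and it suffices to show every cofacet has size exactly $n-d=3$. Cofacets are the inclusion-minimal subsets $T\subseteq V$ with $\mathbf 0\in\relint(\conv(\hat T))$; by Carath\'eodory's theorem a minimal such $T$ in $\R^2$ has $|T|\leq 3$, while the absence of antipodal pairs and of points at the origin rules out $|T|\in\{1,2\}$. Size-$3$ cofacets do exist, e.g.\ $\{\hat x_0,\hat x_i,\hat x_{-i}\}$, because the triangle with vertices $z_0,z_i,z_{-i}$ contains the origin in its interior.

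For $k$-neighborliness (which for $d=2k+1$ is $\lfloor d/2\rfloor$-neighborliness), every $k$-subset $F\subset V$ must be a face; equivalently, $\hat V\cap H\not\subseteq \hat F$ for every open half-plane $H$ through the origin. A short rotation argument shows that the minimum of $|\hat V\cap H|$ over all open half-planes is attained when the bounding line of $H$ passes through a vertex position, at which moment $H$ contains exactly $i$ of the remaining positions; this gives $|\hat V\cap H|=2i=k+1$. Since $|\hat F|=k<k+1$, no such inclusion can hold, so every $k$-subset is a face.

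The main step is to show that all missing faces of $\partial Q_k$ have dimension exactly $k$. Setting $V_H=\{v\in V:\hat v\in H\}$, a subset $F$ is a non-face iff $V_H\subseteq F$ for some open half-plane $H$, and a missing face iff $F$ is an inclusion-minimal such set, which forces $F=V_H$ with $V_H$ itself minimal. Because each vertex position has multiplicity $2$, $V_H$ consists of the two vertices at each of the $i$ or $i+1$ consecutive vertex positions lying inside $H$, so $|V_H|\in\{k+1,k+3\}$. Any $V_H$ of size $k+3$ (corresponding to $i+1$ consecutive positions) strictly contains a $V_{H'}$ of size $k+1$ obtained by tilting the bounding line of $H$ so as to drop either endpoint position, hence is not minimal; conversely, every $V_H$ of size $k+1$ is minimal because no realizable $V_{H'}$ can have fewer than $2i$ vertices. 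It follows that every missing face has exactly $k+1$ vertices, i.e.\ dimension $k$. The most delicate bookkeeping will be the tilt step; since the vertex positions are the discrete angles of a regular polygon, this ultimately reduces to a finite check.
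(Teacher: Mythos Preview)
Your proposal is correct and follows essentially the same approach as the paper: both read simpliciality and neighborliness directly off the Gale diagram via the open-semicircle criterion, and both identify the missing faces with the $2i$-element sets sitting at $i$ consecutive vertex positions of the regular $(2i+1)$-gon (your $V_H$ framing is a clean way to package the minimality argument, whereas the paper instead checks that every $(k+2)$-set not containing such a block is a face). One small quibble: Carath\'eodory's theorem alone does not bound minimal interior-capturing subsets in $\R^2$ by $3$---that would be Steinitz's refinement, giving only $\le 2d=4$---but the absence of antipodal pairs, which you already invoke, is precisely what rules out size-$4$ minimal cofacets here.
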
	

\begin{proof}
	That $Q_k$ is simplicial and neighborly follows easily from the Gale diagram: $Q_k$ is simplicial because the origin is not contained in the relative interior of the convex hull of any two elements of $C_k$, and $Q_k$ is neighborly because every open semicircle contains at least $k+1=2i$ elements of $C_k$; see \cite[Exercise 7.3.7]{Gru-book}. 
	
	To complete the proof, it suffices to show that $F$ is a missing face of $Q_k$ if and only if $\hat{F}$ consists of $i$ consecutive double points of the Gale diagram (i.e., $\{\mathbf{\hat{x}}_1,\mathbf{\hat{y}}_1,\dots,\mathbf{\hat{x}}_{i},\mathbf{\hat{y}}_{i}\}$ and all rotations of this set); in particular, each missing face has dimension $2i-1=k$. First, it is immediate from the Gale diagram that if $\hat{F}=\{\mathbf{\hat{x}}_1,\mathbf{\hat{y}}_1,\dots,\mathbf{\hat{x}}_{i},\mathbf{\hat{y}}_{i}\}$, then $F$ is a missing face. By symmetry, this also holds for all rotations of $\{\mathbf{\hat{x}}_1,\mathbf{\hat{y}}_1,\dots,\mathbf{\hat{x}}_{i},\mathbf{\hat{y}}_{i}\}$. 
	
	For the other direction, we claim that if $\hat{G}\subset C_k$ has size $k+2=2i+1$ and $\hat{G}$ does not contain a rotation of $\{\mathbf{\hat{x}}_1,\mathbf{\hat{y}}_1,\dots,\mathbf{\hat{x}}_{i},\mathbf{\hat{y}}_{i}\}$, then $G$ is the vertex set of a face. Indeed, 
	deleting such $\hat{G}$ from $C_k$ destroys at most $i$ vertices of the regular $(2i+1)$-gon; furthermore, if it destroys exactly $i$ vertices, then the remaining $i+1$ vertices of the $(2i+1)$-gon do not form a consecutive block. It follows that the origin lies in the interior of $\conv(C_k\backslash\hat{G})$, completing the proof. 
\end{proof}

Before we proceed, we discuss additional properties of $Q_k$ that will be crucial for the inductive procedure in Section 7.3. Note that for any $0\leq \ell\leq i$, the antipode of $z_\ell$ on the unit circle  lies between $z_{-(i-\ell)}$ and $z_{-(i-\ell+1)}$  (here we identify $z_{-(i+1)}$ with $z_i$). For this reason, we refer  to each of $z_{-(i-\ell)}$ and $z_{-(i-\ell+1)}$ as an {\em almost antipodal point} of $z_\ell$; we also refer to the corresponding points of $C_k$ as {\em almost antipodal points} of $\mathbf{\hat{x}}_\ell$ and $\mathbf{\hat{y}}_\ell$.

We now define the following sequence of pairwise disjoint edges $e_1, e_2, \dots, e_{k}$ of $Q_k$: let
$$e_1=\mathbf{x}_0\mathbf{x}_i,\quad e_2=\mathbf{y}_0\mathbf{x}_{-i},$$
$$\text{and   }  e_{4j-1}=\mathbf{x}_{-j}\mathbf{y}_{i-j+1},\; e_{4j}=\mathbf{x}_j\mathbf{y}_{-(i-j+1)},\; 
e_{4j+1}=\mathbf{y}_{-j}\mathbf{x}_{i-j}, \; e_{4j+2}=\mathbf{y}_j\mathbf{x}_{-(i-j)} \text{  for $j\geq 1$}.$$
In particular,  the vertices of each edge in the sequence correspond to almost antipodal points of $C_k$. Furthermore, the points of the Gale diagram that correspond to the vertices of $e_{2\ell-1}$  are symmeric about the horizontal diameter of the unit circle to the points corresponding to the vertices of $e_{2\ell}$.

\begin{lemma} \label{lem:links-of-Q_i}
	Consider the boundary complex of $Q_k$, $\partial Q_k$. All links are computed in this complex.
	\begin{enumerate}
		\item The link of any edge $\mathbf{vw}$, where $\mathbf{\hat{v}},\mathbf{\hat{w}}\in C_k$ are almost antipodal, is a neighborly $(2k-2)$-sphere.
		\item For $1\leq j\leq k$, $F_j=e_1\cup \dots\cup e_j$ is a face of $\partial Q_k$. Furthermore, for $1\leq j\leq k-1$, the link of $F_j$ is a neighborly $(2k-2j)$-sphere on vertex set $V(Q_k)\backslash F_j$, and it is isomorphic to $\partial Q_{k-j}$ if $j$ is even.
		\item In particular, the link of $e_1\cup \dots \cup e_{k-1}$ is the octahedral $2$-sphere.
	\end{enumerate}
\end{lemma}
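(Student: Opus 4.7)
All three parts will rest on the characterization of missing faces established in the proof of Proposition~\ref{prop:neighborliness-of-Q_i}: a subset $F\subseteq V(Q_k)$ is a missing face of $\partial Q_k$ iff $\hat F$ consists of both Gale points at each of $i$ cyclically consecutive vertices of the regular $(2i+1)$-gon. For any $F\subseteq V(Q_k)$ I will classify the $2i+1$ Gale-diagram locations into three disjoint sets: $E(F)$ (both points in $F$), $S(F)$ (exactly one point in $F$), and $D(F)$ (no point in $F$). With this bookkeeping, Part~1 is immediate: if $\hat v,\hat w$ are almost antipodal then their locations $z_\ell,z_{\ell'}$ sit at cyclic distance exactly $i$ in the $(2i+1)$-gon, whereas any block of $i$ consecutive vertices spans cyclic distance only $i-1$. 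Hence no missing face of $\partial Q_k$ can contain both $v$ and $w$; combined with the $k$-neighborliness of $\partial Q_k$, every $(k+1)$-subset of $V(Q_k)$ containing $\{v,w\}$ is a face. Translating to the link, every $(k-1)$-subset of $V(\lk(vw,\partial Q_k))$ is a face, and since the link is automatically a $(2k-2)$-sphere, $(k-1)$-neighborliness amounts to neighborliness.

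For Parts~2 and~3, a direct inspection of the inductive formulas for $e_1,e_2,\dots$ yields $|E(F_j)|=j-1$, $|S(F_j)|=2$, and --- crucially --- that $E(F_j)$ splits into two cyclically consecutive clusters on the $(2i+1)$-gon: one around $z_0$ that expands symmetrically through $z_{\pm 1},z_{\pm 2},\dots$, and one around $\{z_i,z_{-i}\}$ that expands symmetrically through $z_{\pm(i-1)},z_{\pm(i-2)},\dots$. An elementary count shows each cluster has size at most $i-1$ whenever $j\le k$, so $E(F_j)$ never contains $i$ cyclically consecutive locations, proving $F_j$ is a face of $\partial Q_k$. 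The same cluster-growth analysis, applied to $E(F_j\cup T)$ for any $(k-j)$-subset $T\subseteq V(Q_k)\setminus F_j$, shows that every cluster of $E(F_j\cup T)$ still has size strictly less than $i$: the key point is that each element of $T$ either leaves $E(F_j)$ unchanged (if $\ell(u)\in D(F_j)$) or adjoins one location of $S(F_j)$, and the arithmetic constraints on how much of $T$ can be devoted to fully filling $D(F_j)$-locations keep the aggregate cluster growth under control. The $|T|=1$ case gives the vertex-set identity $V(\lk(F_j,\partial Q_k))=V(Q_k)\setminus F_j$ for $j\le k-1$, and the general case yields $(k-j)$-neighborliness of $\lk(F_j,\partial Q_k)$, which for a $(2k-2j)$-sphere is exactly neighborliness.

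For the isomorphism $\lk(F_{2m},\partial Q_k)\cong \partial Q_{k-2m}$ at even $j=2m$, I will enumerate the minimal non-faces of $\lk(F_{2m})$ as the minimal elements of $\{M\setminus F_{2m} : M \text{ is a missing face of }\partial Q_k \text{ with } M\not\subseteq F_{2m}\}$. Direct computation gives exactly $2(i-m)+1$ minimal non-faces, each of size $2(i-m)$, arranged in a cyclic pattern: consecutive pairs among the $2(i-m)+1$ ``effective'' locations obtained from the $(2i+1)$-gon by deleting the $|E(F_{2m})|=2m-1$ empty positions and identifying the two locations of $S(F_{2m})$ as a single merged double-point. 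This matches precisely the missing-face pattern of $\partial Q_{k-2m}$, whose Gale diagram has $2(i-m)+1$ double points on a regular $(2(i-m)+1)$-gon. Since an abstract simplicial complex is determined by its vertex set and its minimal non-faces, the isomorphism follows. Part~3 is then the special case $j=k-1$ (which is even because $k=2i-1$ is odd), giving $\lk(F_{k-1})\cong\partial Q_1$, the octahedral $2$-sphere.

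The main obstacle I anticipate is this final isomorphism step: verifying rigorously that the two locations of $S(F_{2m})$ really do behave as a single merged double-point in $\lk(F_{2m})$. Concretely, one must check that every minimal non-face of $\lk(F_{2m})$ arises from some missing face of $\partial Q_k$ (so that no ``spurious'' small non-faces appear from a subtler combinatorial obstruction), and that the induced cyclic order on the $2(i-m)+1$ effective locations matches the natural cyclic order on the Gale diagram of $Q_{k-2m}$. This requires a careful case analysis of how each missing face $M$ of $\partial Q_k$ intersects $F_{2m}$ as $m$ varies; the small-case computations at $k=3,5$ worked out in the preceding examples provide a clear template that the general bookkeeping should follow.
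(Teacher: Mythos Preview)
Your Part~1 argument is correct and in fact a bit slicker than the paper's: you use directly that every missing face of $\partial Q_k$ occupies $i$ cyclically consecutive locations (cyclic diameter $i-1$), whereas almost antipodal points sit at cyclic distance $i$, so no missing face can contain both. The paper instead argues via the Gale criterion that $\mathbf{0}\in\intr\conv(W)$ for every $(k+3)$-subset $W\subseteq C_k\setminus\{\hat v,\hat w\}$, which amounts to the same thing but is phrased on the complement side.

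Your approach to Parts~2 and~3 is genuinely different from the paper's, and here the paper's route is considerably shorter. Rather than tracking $E(F_j)$, $S(F_j)$, $D(F_j)$ and analyzing cluster growth, the paper works entirely at the level of Gale diagrams. It shows that after deleting $\hat e_1\cup\hat e_2$ from $C_k$, the two surviving single points $\hat y_i$ and $\hat y_{-i}$ are adjacent and not separated by any diameter through the remaining configuration, so (by the standard equivalence moves on Gale diagrams of $d$-polytopes with $d+3$ vertices) they can be merged into a double point; the remaining double points can then be slid into the positions of a regular $(2i-1)$-gon without reordering any diameters. This single observation simultaneously yields that $F_2$ is a face, that $\lk(F_2)\cong\partial Q_{k-2}$, and that the edges $e_3,e_4,\dots$ correspond to almost antipodal pairs in the new diagram --- so Parts~2 and~3 follow by Part~1 and induction on $j$ in steps of two. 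Your combinatorial missing-face enumeration should also succeed (your counts for the minimal non-faces of $\lk(F_{2m})$ are correct), but it trades one clean Gale-diagram manipulation for the case analysis you flag as the main obstacle; the paper's approach sidesteps that entirely by never leaving the Gale picture.
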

\begin{proof} 
	For part 1, assume without loss of generality that $e=\mathbf{x}_0\mathbf{x}_i$. To show that $\lk(e)$ is $(k-1)$-neighborly, it suffices to check that $\mathbf{0} \in \intr(\conv(W))$ for any $(k+3)$-subset $W$ of $C_k\backslash \{\mathbf{\hat{x}}_0,\mathbf{\hat{x}}_i\}$. Since $k+3=2i+2$, it follows that the points of $W$ cover at least $i+1$ vertices of the regular $(2i+1)$-gon. Hence $\mathbf{0}\in\intr(\conv(W))$ unless $W$ consists of $i+1$ consecutive double points. However, the latter is impossible because either $W$ is a subset of $C_k\backslash \{\mathbf{\hat{x}}_0, \mathbf{\hat{y}}_0, \mathbf{\hat{x}}_i, \mathbf{\hat{y}}_i\}$, or $W$ contains at least one single point ($\mathbf{\hat{y}}_0$ or $\mathbf{\hat{y}}_i$).
	
	For part 2, we prove that the Gale diagram $C'$ obtained from $C_k$ by removing $\{\mathbf{\hat{x}}_0, \mathbf{\hat{y}}_0, \mathbf{\hat{x}}_i, \mathbf{\hat{x}}_{-i}\}$ is equivalent to the Gale diagram with double points positioned at the vertices of the regular $(2i-1)$-gon. Indeed, the antipodes of both $\mathbf{\hat{y}}_i$ and $\mathbf{\hat{y}}_{-i}$ lie between $\mathbf{\hat{x}}_1$ and $\mathbf{\hat{x}}_{-1}$. Thus, in $C'$, $\mathbf{\hat{y}}_i$ and $\mathbf{\hat{y}}_{-i}$ are adjacent points that are not separated by any diameter through $C'$. By the discussion at the end of Section 7.1, we can merge these two points to form a double point lying in the position opposite to $\mathbf{\hat{x}}_0$. Furthermore, since for $1\leq \ell\leq i-1$, the antipode of	$\mathbf{\hat{x}}_\ell$ (resp.~$\mathbf{\hat{x}}_{-\ell}$) lies between $\mathbf{\hat{x}}_{-(i-\ell)}$ and $\mathbf{\hat{x}}_{-(i-\ell+1)}$ (resp.~$\mathbf{\hat{x}}_{i-\ell}$ and $\mathbf{\hat{x}}_{i-\ell+1}$), we can then move the other $2i-2$ double points along the circle respecting the order of the corresponding diameters, so that the resulting configuration consists of $2i-1$ double points positioned at the vertices of the regular $(2i-1)$-gon.\footnote{To achieve this, first move the double points $\mathbf{\hat{x}}_1,\mathbf{\hat{y}}_1$ and  $\mathbf{\hat{x}}_{-1},\mathbf{\hat{y}}_{-1}$, then move the double points $\mathbf{\hat{x}}_{i-1},\mathbf{\hat{y}}_{i-1}$ and $\mathbf{\hat{x}}_{-(i-1)}, \mathbf{\hat{y}}_{-(i-1)}$, followed by  $\mathbf{\hat{x}}_2,\mathbf{\hat{y}}_2$ and $\mathbf{\hat{x}}_{-2},\mathbf{\hat{y}}_{-2}$, then $\mathbf{\hat{x}}_{i-2},\mathbf{\hat{y}}_{i-2}$ and $\mathbf{\hat{x}}_{-(i-2)}, \mathbf{\hat{y}}_{-(i-2)}$, etc.} (See Figure \ref{equivalent Gale diagram} for an illustration in the case of $k=3$.) In particular, this means that $\lk(e_1\cup e_2)$ is isomorphic to $\partial Q_{k-2}$. Furthermore, the points of the resulting Gale diagram that correspond to the vertices of $e_t$ for any $t\geq 3$ are almost antipodal points of the regular $(2i-1)$-gon. The statement of part 2 then follows from part 1 and Proposition \ref{prop:neighborliness-of-Q_i} by induction on $j$.
	
	According to part 2, the link of $e_1\cup\dots\cup e_{k-1}$ is combinatorially isomorphic to $\partial Q_1$. Part 3 follows because $Q_1$ is an octahedron.\end{proof}

\begin{figure}[ht]
	\centering
	\begin{tikzpicture}
		\draw[thick,red!90!black] (0,0) circle (3cm);
		
		\node[draw=none,minimum size=2cm,regular polygon,regular polygon sides=10] (a) {};
		\foreach \a in {1,2,...,4}
		{\filldraw[black] (70.5: 3cm) circle (2pt);
			\filldraw[black] (73.5: 3cm) circle (2pt);
			\filldraw[black] (144: 3cm) circle (2pt);
			\filldraw[black] (216: 3cm) circle (2pt);
			\filldraw[black] (289.5: 3cm) circle (2pt);
			\filldraw[black] (286.5: 3cm) circle (2pt);
			\draw (72: 3.25cm) node{$\mathbf{\hat{x}}_{1}, \mathbf{\hat{y}}_{1}$};
			\draw (144: 3.3cm) node{$\mathbf{\hat{y}}_{2}$};
			\draw (216: 3.4cm) node{$\mathbf{\hat{y}}_{-2}$};
			\draw (288: 3.3cm) node{$\mathbf{\hat{x}}_{-1}, \mathbf{\hat{y}}_{-1}$};
			\draw (\a*72: 3cm)--(\a*72+180: 3cm);}
	\end{tikzpicture}
	\hspace{1cm}
	\begin{tikzpicture}
		\draw[thick,red!90!black] (0,0) circle (3cm);
		
		\node[draw=none,minimum size=2cm,regular polygon,regular polygon sides=10] (a) {};
		\foreach \a in {1,2,3}
		{\filldraw[black] (\a*120-58.5: 3cm) circle (2pt);
			\filldraw[black] (\a*120-61.5: 3cm) circle (2pt);
			\draw (60: 3.3cm) node{$\mathbf{\hat{x}}_{1}, \mathbf{\hat{y}}_{1}$};
			\draw (180: 3.7cm) node{$\mathbf{\hat{y}}_{2}, \mathbf{\hat{y}}_{-2}$};
			\draw (300: 3.3cm) node{$\mathbf{\hat{x}}_{-1}, \mathbf{\hat{y}}_{-1}$};
			\draw (\a*120-60: 3cm)--(\a*120+120: 3cm);}
	\end{tikzpicture}
	\caption{The Gale diagram $C_3\backslash (\hat{e}_1\cup \hat{e}_2)$ and an equivalent Gale diagram}\label{equivalent Gale diagram}
\end{figure}
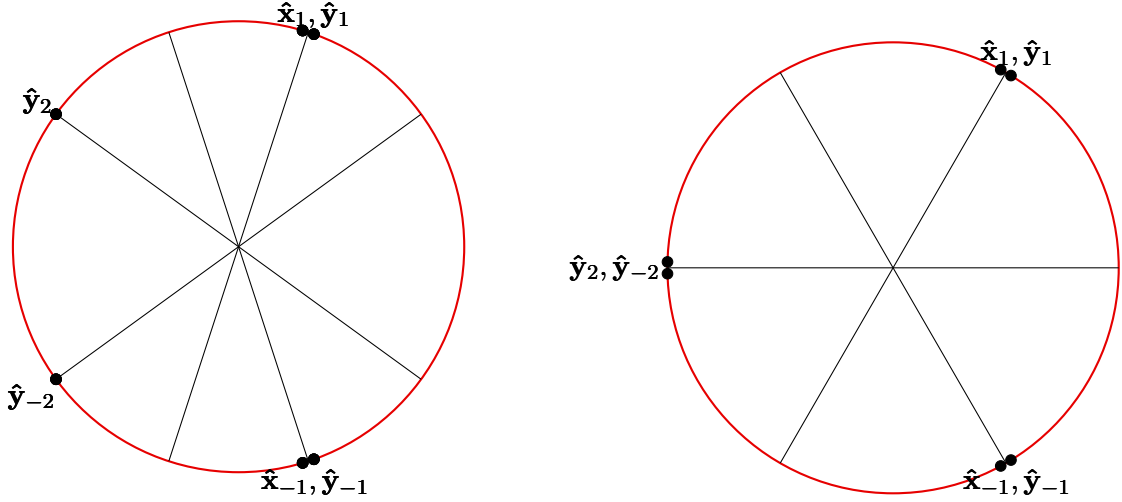

We will now show that when  $k$ is odd, $\partial Q_k$ is the only neighborly $2k$-sphere with $f_0=2k+4$ and $m_{k+1}=0$ while when $k$ is even, a neighborly $2k$-sphere with $m_{k+1}=0$ must have at least $2k+5$ vertices.

\begin{proposition}\label{Prop: vertex-minimal}
	Let $\Delta$ be a neighborly $2k$-sphere with $n$ vertices, and assume that all missing faces of $\Delta$ have dimension $k$. Then $n\geq 2k+4$ if $k$ is odd and $n \geq 2k+5$ if $k$ is even. Furthermore, if $k$ is odd and $n=2k+4$, then $\Delta$ is isomorphic to $\partial Q_k$.
\end{proposition}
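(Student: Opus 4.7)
The bound $n\geq 2k+4$ follows from the discussion preceding Definition~\ref{Q_i}: the unique neighborly $2k$-sphere with $2k+3$ vertices is $\partial\sigma^k*\partial\sigma^{k+1}$, and it has $m_{k+1}=1$. For the remaining claims, assume $n=2k+4$. Since $\Delta$ is a $2k$-sphere with $n\leq 2k+4$ vertices, Mani's theorem (cited in the footnote of Remark~\ref{rem:5-polytopes-small}) gives $\Delta=\partial P$ for a simplicial $(2k+1)$-polytope $P$. As $n=\dim P+3$, the plan is to analyze $P$ via its Gale diagram $\hat{V}\subset \Sp^1\subset \R^2$, a multiset of $2k+4$ points with no two antipodal (by simpliciality of $P$). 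Set $p(\alpha)=|\hat{V}\cap H_\alpha|$, where $H_\alpha$ is the open semicircle centered at direction $\alpha$. Then $k$-neighborliness of $\partial P$ is equivalent to $p(\alpha)\in\{k+1,k+2,k+3\}$ for all $\alpha$, and $m_{k+1}=0$ is equivalent to: every $(k+2)$-subset of $\hat{V}$ lying in some $H_\alpha$ extends to a $(k+3)$-subset lying in some $H_{\alpha'}$. As $\alpha$ rotates, $p$ changes by $\pm n_j$ at each event $\alpha=\theta_j\pm\pi/2$ (where $n_j$ is the multiplicity of the $j$-th distinct location $\theta_j$), so the range-$2$ constraint forces $n_j\in\{1,2\}$.

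\emph{Case 1: all $n_j=2$.} Then $p$ changes by $\pm 2$ at every event and hence preserves parity throughout, so $p\in\{k+1,k+3\}$ and alternates as $\alpha$ rotates. There are $s=k+2$ distinct locations, and the $s$ enters $\{\theta_j-\pi/2\}$ together with their antipodes (the exits $\{\theta_j+\pi/2\}$) must interlace cyclically on $\Sp^1$. A straightforward averaging argument on the angular gaps between consecutive enters shows that such interlacing without antipodal pairs is possible if and only if $k+2$ is odd, i.e., $k$ is odd. In that case, any interlacing arrangement of $k+2$ double locations is combinatorially equivalent (via moves that do not cross diameters) to the regular $(k+2)$-gon with doubles, i.e., to the Gale diagram of $Q_k$, so $\partial P\cong\partial Q_k$. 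When $k$ is even, Case~1 is geometrically impossible.

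\emph{Case 2: some $n_j=1$.} Then the total number of singles $t$ is even and positive, and $p$ takes the value $k+2$ on some nonempty interval. The plan is to classify events bounding $\{p=k+2\}$ intervals into four types (the $\pm 1$ transitions between levels $k+1,k+2$ and between $k+2,k+3$), use the cyclic balance of transition counts (the equalities among these four counts that come from $p$ returning to its starting value), and combine this with the antipodal pairing of each enter $\theta_j-\pi/2$ with its own exit $\theta_j+\pi/2$ to deduce that at least one $\{p=k+2\}$ interval must be bounded on both sides by $\{p=k+1\}$. Such an interval corresponds to a non-extendable $(k+2)$-subset of $\hat{V}$, hence a missing $(k+1)$-face, contradicting $m_{k+1}=0$. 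This is the main obstacle of the proof: singletons may occur in pairs $t=2,4,\ldots$, and the transition-type balance interacts non-trivially with the geometric constraint that no two points of $\hat{V}$ are antipodal, so a careful case analysis is required to rule out every configuration. Combining both cases, $n=2k+4$ is realizable only when $k$ is odd, in which case $\Delta\cong\partial Q_k$; hence $n\geq 2k+5$ when $k$ is even.
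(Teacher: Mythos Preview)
Your overall plan (Mani, Gale diagram on $\Sp^1$, the function $p(\alpha)$, splitting into all-doubles versus some-singles) parallels the paper's, and Case~1 is essentially correct, though the ``averaging argument'' is really a parity argument and the uniqueness step (that any admissible all-doubles configuration is equivalent to the regular $(k+2)$-gon) deserves one more sentence. The real problem is Case~2.

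First, the goal you set there is unachievable as stated: the mere presence of singles does \emph{not} force $m_{k+1}\neq 0$. Take the Gale diagram of $Q_k$ and perturb one double point into two nearby singles not separated by any diameter through $\hat V$; by the discussion at the end of Section~7.1 this does not change the combinatorial type, so $m_{k+1}$ is still zero. In that example (check it for $k=1$) every $p=k+2$ interval is bounded by $p=k+1$ on one side and $p=k+3$ on the other, so the deduction you hope for (``at least one $\{p=k+2\}$ interval must be bounded on both sides by $\{p=k+1\}$'') already fails. Second, the final implication in Case~2 is backwards: if a $p=k+2$ interval is bounded by $p=k+1$ on a side, then rotating $\alpha$ into that side exhibits a $(k+1)$-non-face contained in the corresponding $(k+2)$-non-face, so the latter is precisely \emph{not} a missing $(k+1)$-face.

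The paper avoids the case analysis entirely. If $u_0\in\hat V$ is a single point, the diameter through $u_0$ has $k+1$ points on one open side and $k+2$ on the other, say $u_1,\ldots,u_{k+2}$ with $u_1$ closest to $u_0$. The corresponding vertices form a $(k+2)$-non-face; since $m_{k+1}=0$, some $u_j$ can be removed while keeping a non-face, and this forces the short arc from $-u_0$ to $-u_1$ to contain no point of $\hat V$. Hence $u_0$ and $u_1$ are not separated by any diameter through $\hat V$ and may be merged without changing the combinatorial type. Iterating eliminates all singles and reduces to your Case~1.
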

\begin{proof}
	In the beginning of Section 7.2, we saw that $n\geq 2k+4$. Thus, assume that $\Delta$ has $2k+4$ vertices, and hence, by a result of Mani \cite{Mani1972}, it is the boundary complex of some polytope $P$. Let $C$ be the Gale diagram of $P$. Then neighborliness of $\Delta$ guarantees that every diameter through $C$ has at least $k+1$ elements of $C$ (counted with multiplicities) on each of its open sides. In particular, no point of the diagram can have multiplicity larger than two.

	Let $u_0\in C$ be a single point. Then the diameter through $u_0$ has $k+1$ elements of $C$ on one open side, and $k+2$ on the other; denote them by $u_1,\dots ,u_{k+2}$ according to their distances from $u_0$ with $u_1$ being the closest. The vertices corresponding to $u_1,\dots,u_{k+2}$ do not form a face of $P$. Since there are no missing faces of size $k+2$, there is some $1\leq j\leq k+2$ such that $\mathbf{0}\notin\relint(\conv((C\backslash \{u_1, \dots u_{k+2}\})\cup u_j)$. Hence the shorter arc from $-u_0$ to $-u_1$ in $\Sp^1$ contains no elements of $C$. Thus, as was explained in Section 7.1, merging $u_1$ with $u_0$ does not change the combinatorial type of $P$. 
	
	Applying the same argument to other single points of $C$, we conclude that $C$ is equivalent to a Gale diagram where no point is single, and hence all points are double (as points of multiplicity larger than two violate neighborliness).
Furthermore, for every two adjacent double points $v$ and $v'$, there must be a point on the shorter arc from $-v$ to $-v'$, or else we would be able to merge $v$ and $v'$ creating a point of multiplicity larger than two. Thus, every diameter through $C$ 
contains exactly $k+1$ elements on each of its open sides, and they are presented by $(k+1)/2$ double points. This is impossible if $k$ is even. Therefore, when $k$ is even, $\Delta$ must have at least $2k+5$ vertices. Finally, if $k$ is odd, then the above description of the Gale diagram of $P$ shows that it is equivalent to that of $Q_k$,which implies that $\Delta=\partial Q_k$.
\end{proof}

To close this section, we provide a vertex-minimal construction of a neighborly $5$-polytope with $m_3=0$.

\begin{figure}[ht]
	\centering
	\begin{tikzpicture}[scale=0.8]
		\draw (4, 0)node[right]{8}--(-4,0)node[left]{6}--(0,6.9)node[above]{3}--(4,0)--(-1.7,3.3);
		\draw (0, 0.5)node[below]{1}--(1.7,3.3)node[right]{2}--(-1.7, 3.3)node[left]{4}--(0,0.5)--(-4,0);
		\draw (1.7,3.3)--(-4,0)--(-1.7,3.3)--(0,6.9)--(0,0.5)--(4,0)--(1.7,3.3)--(0,6.9);
		\draw[red] (-2,0.6)node[left]{9}--(2.5,1.4)node[below]{5}--(-0.4,4.8)node[left]{7}--(-2, 0.6);
	\end{tikzpicture}
	\caption{The affine Gale diagram of $\mathcal{P}^0_{42}$}
	\label{AffineGaleDiagram}
\end{figure}
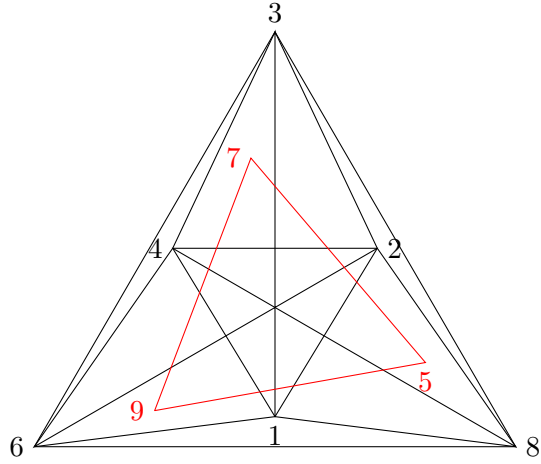 
\begin{definition}
	Consider the simplicial complex generated by the facets
	\begin{equation}\label{list of facets}
		\begin{split}
			&\{1,3,6,8,9\},\{1,3,4,8,9\},\{3,4,6,8,9\},\{2,4,6,8,9\},\{1,3,5,6,8\},\{1,2,3,5,6\},\\
			&\{2,3,5,6,8\},\{1,3,4,5,8\},\{2,3,6,7,8\},\{2,4,6,7,8\},\{3,4,6,7,8\},\{1,2,3,6,7\},\\
			&\{1,5,6,8,9\},\{2,3,5,7,8\},\{3,4,6,7,9\},\{1,2,4,5,9\},\{1,2,4,5,7\},\{1,2,4,7,9\},\\
			&\{2,5,6,8,9\},\{2,4,5,8,9\},\{2,4,5,7,8\},\{2,4,6,7,9\},\{1,3,6,7,9\},\{1,2,6,7,9\},\\
			&\{1,2,5,6,9\},\{1,2,3,5,7\},\{3,4,5,7,8\},\{1,3,4,7,9\},\{1,3,4,5,7\},\{1,4,5,8,9\}.
		\end{split}
	\end{equation}
It is the boundary complex of the $5$-polytope $\mathcal{P}^0_{42}$ from \cite{Finbow}.
\end{definition}

One can ``picture'' $\mathcal{P}^0_{42}$ in $\R^2$ using the notion of  the {\em affine Gale diagram}. We refer the reader to \cite[Section 6.4]{Ziegler} for precise definitions, and merely mention that while the Gale diagram of $\mathcal{P}^0_{42}$ lives in $\R^3$, the affine Gale diagram of $\mathcal{P}^0_{42}$ lives in $\R^2$. As in the case of the usual Gale diagram, the affine Gale diagram consists of nine points corresponding to the vertices of $\mathcal{P}^0_{42}$. The difference is that in the affine case, each point is 
colored red or black. For simplicity, we label these points using the labels of the corresponding vertices of $\partial \mathcal{P}^0_{42}$.  A set $F\subset [9]$ corresponds to the vertex set of a facet if and only if the red and black points of $F^c:=[9]\backslash F$ form a {\em Radon partition}, i.e., the convex hull of the red points of $F^c$ intersects the convex hull of the black points of $F^c$. 

The affine Gale diagram of $\mathcal{P}^0_{42}$ is shown in Figure \ref{AffineGaleDiagram}. Here $V_1=\{5,7,9\}$ is the set of red points and $V_2=\{1,2,3,4,6,8\}$ is the set of black points. We say that a Radon partition $(W_1, W_2)$, where $W_i\subset V_i$, is of type $(j, 4-j)$ if $|W_1|=j$. Each red edge in the picture crosses $4$ black edges, and hence there are $12$ Radon partitions of type $(2,2)$. Their complements give the first $12$ facets in (\ref{list of facets}). Similarly, the complements of $18$ Radon partitions of type $(1,3)$ give the remaining $18$ facets in (\ref{list of facets}).  The missing $2$-faces of $\mathcal{P}^0_{42}$ are
$$\{9, 2, 3\}, \{2, 3, 4\}, \{9, 5, 7\}, \{5, 6, 7\}, \{8, 1, 7\}, \{8, 1, 2\}, \{9, 3, 5\}, \{4, 5, 6\}, \{8, 9, 7\}, \{1, 4, 6\},$$
and one can check that there are no missing $3$-faces.

By the enumeration of neighborly $5$-polytopes with $9$ vertices in \cite{Finbow}, $\mathcal{P}^0_{42}$ is the only vertex-minimal neighborly $5$-polytope all of whose missing faces have dimension $2$. We do not know if there exist other neighborly (non-polytopal) $4$-spheres with $9$ vertices and $m_3=0$. The complex $4\_10\_1\_1$ from \cite{manifold_page} is a vertex-transitive neighborly $4$-sphere with $10$ vertices and $m_3=0$.

\subsection{Generating infinite families}
We now discuss an inductive procedure which, using the vertex-minimal neighborly spheres from Section 7.2 as the base case, will allow us to construct infinite families of neighborly $2k$-spheres all of whose missing faces have dimension $k$. 

Our inductive procedure relies on a few lemmas. We say that a $d$-ball is {\em exactly $i$-stacked} if all of its minimal interior faces are of dimension $d-i$. For example, a $d$-simplex is exactly $0$-stacked and a stacked $d$-ball that is not a $d$-simplex is exactly $1$-stacked. If $B\subset S$ are two simplicial complexes, then we say that $B$ is {\em induced on its $k$-skeleton in $S$} if every missing face of $B$ of dimension $\geq k+1$ is also a missing face of $S$. Throughout this section, we assume that $k\geq 1$.

\begin{lemma} \label{Claim 1}
	Let $\Gamma$ be a $k$-neighborly PL $2k$-sphere with $V(\Gamma)=[n]$.  
	Assume that $\Gamma$ contains a  PL $2k$-ball $D$ with the following properties:
	\begin{enumerate}
		\item $D$ is $(k-1)$-neighborly with $V(D)=[n]$ (this condition is omitted if $k=1$),
		\item $D$ is exactly $k$-stacked, 
		\item $D$ is induced on its $(k-1)$-skeleton in $\Gamma$.
	\end{enumerate} 
	Let $\Gamma'$ be the complex obtained from $\Gamma$ by replacing $D$ with $\partial D*(n+1)$.
	Then $\Gamma'$ is a $k$-neighborly PL $2k$-sphere with $V(\Gamma')=[n+1]$. Furthermore, if all missing faces of $\Gamma$ have dimension $k$, then so do all missing faces of $\Gamma'$.
\end{lemma}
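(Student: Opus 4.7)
The plan is to express $\Gamma$ as $D \cup E$, where $E$ is the closure in $\Gamma$ of the complement of $D$. Since $D$ is a PL $2k$-ball sitting inside the PL $2k$-sphere $\Gamma$, $E$ is itself a PL $2k$-ball with $D \cap E = \partial D$. Replacing $D$ by $\partial D * (n+1)$---also a PL $2k$-ball, being the cone over a PL $(2k-1)$-sphere---then presents $\Gamma' = E \cup (\partial D * (n+1))$ as the union of two PL $2k$-balls glued along their common PL $(2k-1)$-sphere boundary $\partial D$. Such a gluing is a PL $2k$-sphere with vertex set $V(\Gamma) \cup \{n+1\} = [n+1]$, which disposes of the topological part.

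For $k$-neighborliness of $\Gamma'$, I would take an arbitrary $k$-subset $\sigma \subseteq [n+1]$ and split on whether $n+1 \in \sigma$. If $n+1 \in \sigma$, then $\sigma \setminus \{n+1\}$ is a $(k-1)$-face lying in $D$ by the $(k-1)$-neighborliness of $D$; since its dimension is below $k$, exact $k$-stackedness forces it into $\partial D$, so $\sigma \in \partial D * (n+1) \subseteq \Gamma'$. If $n+1 \notin \sigma$, then $k$-neighborliness of $\Gamma$ gives $\sigma \in \Gamma$, and $\sigma$ is either not in $D$ (hence in $E$) or a $(k-1)$-face of $D$, which is again in $\partial D$ by $k$-stackedness.

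The crux, and the main obstacle, is the final clause about missing faces. Since $\Gamma'$ is $k$-neighborly, every missing face has dimension $\geq k$, so the task is to rule out a missing face $\sigma$ of $\Gamma'$ with $\dim\sigma \geq k+1$, and I would split on whether $n+1 \in \sigma$. If $n+1 \notin \sigma$, then the proper subfaces of $\sigma$ all lie in $\Gamma$, so the hypothesis on $\Gamma$ forces $\sigma \in \Gamma$; but $\sigma \notin \Gamma'$ then forces $\sigma$ to be interior to $D$, and exact $k$-stackedness produces a proper subface $\tau \subsetneq \sigma$ that is also interior to $D$, avoiding both $\partial D$ and $E$ and thereby contradicting $\tau \in \Gamma'$. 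If $n+1 \in \sigma$, write $\sigma = \tau \cup \{n+1\}$; from $\sigma \notin \Gamma'$ one obtains $\tau \notin \partial D$, while from each $\tau' \cup \{n+1\}$ with $\tau' \subsetneq \tau$ lying in $\Gamma'$ one obtains $\tau' \in \partial D$. Hence $\tau$ is a missing face of $\partial D$, and therefore either interior to $D$ (which, as in the first case, precludes $\tau \in \Gamma'$) or missing from $D$ altogether. In the latter situation, the hypothesis that $D$ is induced on its $(k-1)$-skeleton in $\Gamma$, together with $\dim \tau \geq k$, forces $\tau$ to be missing from $\Gamma$; but $\tau \in \Gamma'$ with $n+1 \notin \tau$ places $\tau$ in $\Gamma$, a contradiction. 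This final deduction is precisely where the induced-on-$(k-1)$-skeleton hypothesis becomes indispensable.
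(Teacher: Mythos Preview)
Your argument is correct and follows essentially the same strategy as the paper's proof: both establish the PL-sphere structure from the ball-gluing, derive $k$-neighborliness from the fact that $\skel_{k-1}(D)=\skel_{k-1}(\partial D)$ (a consequence of exact $k$-stackedness) together with $(k-1)$-neighborliness of $D$, and then classify the missing faces of $\Gamma'$ according to whether they contain $n+1$. The only stylistic difference is that the paper argues directly that every missing face has dimension exactly $k$, whereas you argue by contradiction that no missing face can have dimension $\geq k+1$; in particular, your explicit treatment of the subcase ``$\tau$ is interior to $D$'' fills in a step the paper leaves implicit. One small terminological slip: when $|\sigma|=k$ and $n+1\in\sigma$, the set $\sigma\setminus\{n+1\}$ has $k-1$ elements and is therefore a $(k-2)$-face in the paper's conventions, not a $(k-1)$-face; this does not affect the argument.
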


\begin{proof}
	Since $\Gamma$ is a PL $2k$-sphere and $D$ is a PL $2k$-ball, it follows that $\Gamma'$ is a PL $2k$-sphere. Let $M$ be the set of minimal interior faces of $D$. By condition 2, each element of $M$ is of dimension $k$. Hence $\skel_{k-1}(\partial D)=\skel_{k-1}(D)$. This implies that $V(\Gamma')=[n+1]$; in particular, if $k=1$, then $\Gamma'$ is $k$-neighborly w.r.t.~$[n+1]$. Furthermore, for $k>1$, the fact that $\skel_{k-1}(\partial D)=\skel_{k-1}(D)$ together with the assumptions that  $D$ is $(k-1)$-neighborly (condition 1) and $\Gamma$ is $k$-neighborly implies that $\Gamma'$ is $k$-neighborly.
	
	Finally, assume that all missing faces of $\Gamma$ have dimension $k$. Note that a missing face of $\Gamma'$ is either a missing face of $\Gamma$, or a minimal interior face of $D$, or a missing face containing vertex $n+1$. In the former two cases, it must be of dimension $k$ by our assumptions on $\Gamma$ and $D$. In the latter case, it must be of the form $F\cup(n+1)$, where $F$ is a face of $\Gamma$ but a missing face of $D$. Since $D$ is induced on its $(k-1)$-skeleton in $\Gamma$ (condition 3), it follows that $\dim F\leq k-1$, or equivalently, that $|F|\leq k$. On the other hand, the assumption that $D$ is $(k-1)$-neighborly implies that $|F|=k$. (If $k=1$, then $|F|=1$ since the empty face cannot be a missing face.) Hence the missing face $F\cup(n+1)$ has dimension $k$.
\end{proof}

When $D\subset \Gamma$ are pure complexes of the same dimension, we denote by $\Gamma\backslash D$ the subcomplex of $\Gamma$ generated by the facets of $\Gamma$ that do not belong to $D$. We call $\Gamma\backslash D$ the {\em complement of $D$ in $\Gamma$}.

\begin{lemma} \label{Claim 2}
	Let  $\Gamma$ be a $k$-neighborly PL $2k$-sphere. Let $D$ be a PL $2k$-ball in $\Gamma$ such that
	\begin{enumerate}
		\item $D$ is $(k-1)$-neighborly with $V(D)=V(\Gamma)$ (this condition is omitted if $k=1$),
		\item $D$ is exactly $k$-stacked, 
		\item $D$ is induced on its $(k-1)$-skeleton in $\Gamma$.
	\end{enumerate} Then the complement $B$ of $D$ in $\Gamma$ is a $k$-neighborly PL $2k$-ball with $V(B)=V(\Gamma)$. Furthermore, $B$ is exactly $(k+1)$-stacked and induced on its $k$-skeleton in $\Gamma$.
\end{lemma}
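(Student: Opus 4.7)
The plan is to verify the four assertions about $B$ one at a time, essentially running the argument of the previous lemma in reverse. First, since $\Gamma = B \cup D$ expresses the PL $2k$-sphere $\Gamma$ as the union of two PL $2k$-balls, standard PL topology gives that $B$ is a PL $2k$-ball with $\partial B = \partial D = B \cap D$. For the vertex set, any vertex $v \in V(\Gamma) \setminus V(D)$ must lie in some facet of $B$, while any $v \in V(D)$ is necessarily a boundary vertex of $D$ (since $D$ is exactly $k$-stacked with $k \geq 1$, no vertex of $D$ can be an interior face), and hence $v \in \partial D \subset B$.

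For the $k$-neighborliness of $B$, I would take an arbitrary $k$-subset $F \subset V(\Gamma)$: by $k$-neighborliness of $\Gamma$, $F$ is a face of $\Gamma$, so it lies in $B$ or in $D$. In the latter case $\dim F = k-1$, and since the minimal interior faces of $D$ all have dimension exactly $k$, $F$ cannot be interior to $D$, so $F \in \partial D \subset B$. For the stackedness claim, the key observation is that the minimal interior faces of $B$ are precisely the missing faces of $D$ that happen to be faces of $\Gamma$: if $F$ is minimal interior to $B$, then $F \notin D$ (as $B \cap D = \partial D$) and every proper subface of $F$ lies on $\partial D \subset D$. Pinning down their dimension then combines two bounds: the $(k-1)$-neighborliness of $D$ (vacuous when $k = 1$) forces every missing face of $D$ to have dimension $\geq k-1$, while condition~3 forces every missing face of $D$ of dimension $\geq k$ to be absent from $\Gamma$ and hence from $B$. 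Together, these pin the dimension of every minimal interior face of $B$ to be exactly $k-1$. Since $B$ is a non-empty $2k$-ball it has facets and therefore at least one minimal interior face, so $B$ is exactly $(k+1)$-stacked.

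Finally, for $B$ being induced on its $k$-skeleton in $\Gamma$, I would assume for contradiction that $F$ is a missing face of $B$ of dimension $\geq k+1$ with $F \in \Gamma$. Then $F \in D$ and $F$ is interior to $D$ (as $F \notin B$ means $F \notin \partial D$). But $\dim F \geq k+1$ strictly exceeds the dimension $k$ of the minimal interior faces of $D$, so $F$ must strictly contain some minimal interior $k$-face $F''$ of $D$. Now $F''$ is a proper subface of $F$, so by the missing-face hypothesis $F'' \in B$; combined with $F'' \in D \setminus \partial D$ this contradicts $B \cap D = \partial D$. Hence $F \notin \Gamma$, and since all proper subfaces of $F$ lie in $B \subset \Gamma$, $F$ is indeed missing in $\Gamma$. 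The main technical point I foresee is the interplay between the $(k-1)$-neighborliness of $D$ and condition~3 in pinning the minimal interior face dimension of $B$ to exactly $k-1$; the rest is a straightforward unpacking of the duality between a PL $2k$-ball and its complement in a PL $2k$-sphere.
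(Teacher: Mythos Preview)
Your proof is correct and follows essentially the same approach as the paper's: both identify the minimal interior faces of $B$ with missing faces of $D$ that are faces of $\Gamma$, then use conditions~1 and~3 to pin their dimension to $k-1$. Your argument for the ``induced on the $k$-skeleton'' part is a slight variant (you derive a contradiction from a proper subface $F''$ being simultaneously in $B$ and interior to $D$, whereas the paper observes directly that such an $F$ would itself be a minimal interior face of $D$ and hence have the wrong dimension), but both are valid and equally short.
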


\begin{proof}
	Since $D$ is exactly $k$-stacked (condition 2), the minimal interior faces of $D$ are of dimension $2k-k=k$, and so $V(B)=V(\Gamma)$. Furthermore, since $\Gamma$ is  $k$-neighborly, it follows that the complement $B$ of $D$ is also $k$-neighborly. Let $F$ be a minimal interior face of $B$, or equivalently, a face of $\Gamma$ that is a missing face of $D$. Since $D$ is $(k-1)$-neighborly (condition 1), the dimension of $F$ is at least $k-1$. (When $k=1$, this holds because the empty face is not a missing face.) As $D$ is induced on its $(k-1)$-skeleton in $\Gamma$ (condition 3), we conclude that $\dim F \leq k-1$. Thus $\dim F =k-1=2k-(k+1)$, and so $B$ is exactly $(k+1)$-stacked. Finally, we show that $B$ is induced on its $k$-skeleton in $\Gamma$. Let $F$ be a missing face of $B$ of dimension $\geq k+1$. Then $F$ is either a missing face of $\Gamma$, in which case we are done, or $F$ is a minimal interior face of $D$, in which case it can only be of dimension $k-1$. Thus,  no face of $\Gamma$ dimension $\geq k+1$ is a missing face of $B$. 
\end{proof}

\begin{lemma}\label{Claim 3}
	Let $\Sigma$ be a $k$-neighborly PL $2k$-sphere. 
	Let $E=\{e_1, e_2, \dots, e_k\}$ be a sequence of pairwise disjoint edges of $\Sigma$ such that for all $0\leq j\leq k$, $F_j=e_1\cup \dots \cup e_j$ is a face of $\Sigma$,  and let $\Gamma_{k-j}:=\lk(F_j,\Sigma)$. Assume further that for all $0\leq j\leq k-1$, $\Gamma_{k-j}$ satisfies the following conditions: 

\begin{itemize}
	\item[$(*)$] $\Gamma_{k-j}$ is a $(k-j)$-neighborly $(2k-2j)$-sphere with vertex set $V(\Sigma)\backslash F_j$ (this condition is omitted if $j=k-1$);
	\item[$(**)$] if $k-j$ is odd, then all missing faces of $\Gamma_{k-j}$ have dimension $k-j$.
\end{itemize}
	
	Define the following collection of balls $(D_j, B_j)$ in $\Gamma_j$ for $1 \leq j\leq k$:
	$$D_1=\overline{e_k}*\Gamma_0 \text{ and } B_1=\Gamma_1\backslash D_1,$$
	$$\text{and for } 2\leq j\leq k, \quad D_j=\overline{e_{k+1-j}}*B_{j-1} \text{ and } B_j=\Gamma_j\backslash D_j .$$
	Then  for all $1\leq j\leq k$, 
	\begin{enumerate}
		\item $D_j$ is $(j-1)$-neighborly with $V(D_j)=V(\Gamma_j)$ (if $j>1$), exactly $j$-stacked, and induced on its $(j-1)$-skeleton in $\Gamma_j$.
		\item $B_j$ is $j$-neighborly with $V(B_j)=V(\Gamma_j)$, exactly $(j+1)$-stacked, and induced on its $j$-skeleton in $\Gamma_j$.
	\end{enumerate}
\end{lemma}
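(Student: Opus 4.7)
The argument is a simultaneous induction on $j$, $1\le j\le k$, interleaving parts 1 and 2. At each step, part 2 follows by a direct application of Lemma \ref{Claim 2} to the pair $(D_j,\Gamma_j)$: the $j$-neighborliness of $\Gamma_j$ comes from $(*)$, and the three hypotheses on $D_j$ are precisely the conclusions of part 1 for $j$. So the real work lies in establishing part 1 at each $j$.

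Base case $j=1$: $\Gamma_0=\lk(e_k,\Gamma_1)$ is a $0$-sphere, so $D_1=\overline{e_k}*\Gamma_0$ is the closed star of $e_k$ in $\Gamma_1$, i.e.\ two triangles sharing $e_k$. A direct link computation identifies $e_k$ as the unique minimal interior face of $D_1$, so $D_1$ is exactly $1$-stacked. The only missing face of $D_1$ is the missing edge of $\Gamma_0$; by $(**)$ applied to $\Gamma_1$ (which is flag, since $k-(k-1)=1$ is odd), this edge is also missing in $\Gamma_1$, establishing the induced property.

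For the inductive step, assume parts 1 and 2 hold through $j-1$. The join structure $D_j=\overline{e_{k+1-j}}*B_{j-1}$ yields: $V(D_j)=V(e_{k+1-j})\cup V(B_{j-1})=V(e_{k+1-j})\cup(V(\Sigma)\setminus F_{k-j+1})=V(\Sigma)\setminus F_{k-j}=V(\Gamma_j)$, using $(*)$ and the inductive hypothesis (IH) on $V(B_{j-1})$; $(j-1)$-neighborliness of $D_j$, since each $(\le j-1)$-subset of $V(D_j)$ decomposes as a face of $\overline{e_{k+1-j}}$ joined with a $(\le j-1)$-subset of $V(B_{j-1})$, which is a face by the IH $(j-1)$-neighborliness of $B_{j-1}$; and exact $j$-stackedness via the standard join/link computation (by Künneth, interior faces of $\overline{e}*B$ are exactly $e\cup\sigma$ with $\sigma$ interior in $B$), giving minimal interior faces of $D_j$ of dimension $(2(j-1)-j)+2=j$ by IH.

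The main obstacle is the induced property. Missing faces of a join $\overline{e}*B$ coincide with the missing faces of $B$, so missing faces of $D_j$ are exactly those of $B_{j-1}$; by IH any such missing face $F$ of dimension $\ge j$ is missing in $\Gamma_{j-1}$. Split on parity: if $j$ is even then $(**)$ applies to $\Gamma_{j-1}$ (as $j-1$ is odd), forcing all its missing faces to have dimension $j-1$, so no such $F$ of dim $\ge j$ exists and the claim is vacuous. If $j$ is odd, then $(**)$ applies to $\Gamma_j$, and we argue by contradiction: assuming $F\in\Gamma_j$, the fact that $F\notin\Gamma_{j-1}$ means $F\cup e_{k-j+1}\notin\Gamma_j$, so this set contains a missing face $M$ of $\Gamma_j$ with $|M|=j+1$. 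A short case analysis on $M\cap e_{k-j+1}$ shows $M$ is either a proper subset of $F$ (impossible, as those are faces of $\Gamma_j$), or $M=M'\cup\tau$ with $M'\subsetneq F$ and $\emptyset\ne\tau\subseteq V(e_{k-j+1})$, in which case $M'\in\Gamma_{j-1}$ yields $M'\cup e_{k-j+1}\in\Gamma_j$, hence $M\in\Gamma_j$, contradicting $M$ being missing. Therefore $F\notin\Gamma_j$, so $F$ is missing in $\Gamma_j$, completing the induction.
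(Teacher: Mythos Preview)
Your argument is correct and follows essentially the same route as the paper: reduce part~2 to part~1 via Lemma~\ref{Claim 2}, then induct on $j$, splitting the induced-skeleton verification by parity using $(**)$. Two minor expository points: in the base case, flagness of $\Gamma_1$ does not by itself give $ab\notin\Gamma_1$---you need the step (made explicit in the paper) that if $ab\in\Gamma_1$ then flagness forces the $3$-simplex $e_k\cup\{a,b\}$ into the $2$-sphere $\Gamma_1$, a contradiction; and in your odd-$j$ case analysis, the subcase $M=F$ (possible when $|F|=j+1$) is not covered by ``$M$ a proper subset of $F$'', though it yields the desired contradiction immediately.
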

\begin{proof}
Observe that $F_0=\emptyset$ and $\Gamma_k=\Sigma$. By Lemma \ref{Claim 2}, if $D_j$ satisfies the desired properties, then by $(*)$, 
 so does $B_j$. To prove the claim about $D_j$, we induct on $j$. In the base case of $j=0$, $\Gamma_0$ is a $0$-sphere; assume its vertices are $a$ and $b$. By $(**)$,
  $\Gamma_1$ is flag. So if $ab$ is an edge of $\Gamma_1$, then the $2$-sphere $\Gamma_1$ contains the $3$-simplex on $V(D_1)$, which is impossible. Thus, $D_1$ is exactly $1$-stacked and induced on its $0$-skeleton in $\Gamma_1$. 
	
	For our inductive step, assume that $j\geq 2$ and that $D_{j-1}$ and $B_{j-1}$ satisfy the desired conditions. Since $D_j=\overline{e_{k-j+1}}*B_{j-1}$, the assumptions that $B_{j-1}$ is $(j-1)$-neighborly and exactly $j$-stacked, imply that so is $D_j$. Furthermore, that $V(B_{j-1})=V(\Gamma_{j-1})$ implies that $V(D_j)=V(\Gamma_j)$. To see that $D_j$ is induced on its $(j-1)$-skeleton in $\Gamma_j$, let $F$ be a missing face of $D_j$ of dimension $\geq j$. Then $F$ is a missing face of $B_{j-1}$ of dimension $\geq j$. But $B_{j-1}$ is induced on its $(j-1)$-skeleton in $\Gamma_{j-1}$, and so $F$ must be a missing face of $\Gamma_{j-1}$. If $j-1$ is odd, then by $(**)$, $\Gamma_{j-1}$ has no missing faces of dimension $\geq j$. We conclude that, in this case, all missing faces of $D_j$ have dimension $\leq j-1$, and so $D_j$ is induced on its $(j-1)$-skeleton in $\Gamma_j$. Finally, in the case that $j$ is odd, since $\Gamma_{j-1}=\lk(e_j, \Gamma_j)$, there must be a subset $X$ of $e_j$ such that $F\cup X$ is a missing face of $\Gamma_j$. In particular, $\dim (F\cup X)\geq \dim(F)\geq j$. But 
	since $j$ is odd, by $(**)$, all missing faces of $\Gamma_j$ have dimension $j$. This implies that $X=\emptyset$ and that $F$ is a missing face of $\Gamma_j$ (of dimension $j$). Thus, we again conclude that $D_j$ is induced on its $(j-1)$-skeleton in $\Gamma_j$. This completes the proof.  
\end{proof}

\begin{lemma}\label{Claim 4}
Let $\Sigma$ be a $2k$-sphere with $V(\Sigma)=[n]$. Assume that the pair $(\Sigma, E=\{e_1, e_2, \dots, e_k\})$ satisfies all the assumptions of Lemma \ref{Claim 3} and let $D_k$ be defined as in that lemma; further, by relabeling the vertices, if necessary, assume that $e_j=\{n+1-2j, n+2-2j\}$ for all $1\leq j\leq k$. Let $e'_j=\{n+2-2j,n+3-2j\}$ for all $1\leq j\leq k$, and let the complex $\Sigma'$ be obtained from $\Sigma$ by sewing a new vertex $n+1$ on $D_k$, i.e., by replacing $D_k$ with $\partial D_k*(n+1)$. Then $V(\Sigma')=[n+1]$ and the pair $(\Sigma', E':=\{e'_1, e'_2,\ldots, e'_k\})$ satisfies all the assumptions of Lemma \ref{Claim 3}. 
\end{lemma}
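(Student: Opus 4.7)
The plan is to verify each of the hypotheses of Lemma~\ref{Claim 3} for the pair $(\Sigma', E')$. First, applying Lemma~\ref{Claim 1} directly to $(\Sigma, D_k)$---whose hypotheses are precisely the conclusions of Lemma~\ref{Claim 3} for the original pair---immediately yields that $\Sigma'$ is a $k$-neighborly PL $2k$-sphere with $V(\Sigma')=[n+1]$, and that when $k$ is odd (so that condition $(**)$ forces $\Sigma$ to have all missing faces of dimension $k$), the same holds for $\Sigma'$. This handles the vertex-set claim and the $j=0$ instance of conditions $(*)$ and $(**)$.

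The heart of the argument is a recursive description of links in $\partial D_k$ that I would prove by induction on $i$ for $0\leq i\leq k-1$: namely, $F_i\in \partial D_k$ and $\lk(F_i,\partial D_k)=\partial D_{k-i}$. The base case $i=0$ is tautological. For the inductive step (valid for $i\leq k-2$), note that $D_{k-i}=\overline{e_{i+1}}*B_{k-i-1}$, so $\lk(e_{i+1},D_{k-i})=B_{k-i-1}$ is a ball, making $e_{i+1}$ a boundary face of $D_{k-i}$; combining this with the inductive hypothesis and the identity $\lk(F,\partial B)=\partial\lk(F,B)$ (valid when $F$ is a boundary face of a ball $B$ whose link in $B$ is again a ball) gives $\lk(F_{i+1},\partial D_k)=\partial B_{k-i-1}=\partial D_{k-i-1}$, where the last equality uses $D_{k-i-1}\cup B_{k-i-1}=\Gamma_{k-i-1}$. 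Combined with $\lk(n+1,\Sigma')=\partial D_k$ and the decomposition $F'_j\setminus\{n+1\}=F_{j-1}\cup\{n+2-2j\}$, this lemma (together with the observation that the larger endpoint $n+2-2j$ of $e_j$ lies in $\partial D_{k-j+1}$) shows $F'_j\in \Sigma'$ for all $0\leq j\leq k$. Pairwise disjointness of $E'$ is immediate from the definitions, and each $e'_j\subseteq F'_j$ is therefore an edge of $\Sigma'$.

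Finally, for $1\leq j\leq k-1$ I would establish the key identification: $\Gamma'_{k-j}=\lk(F'_j,\Sigma')$ is exactly the sphere obtained from $\Gamma_{k-j}$ by sewing a new vertex, taken to be $n+1-2j$, onto $D_{k-j}$. Peeling off first $n+1$, then $F_{j-1}$ (via the lemma above), then the vertex $n+2-2j$, and using $\lk(n+2-2j,D_{k-j+1})=\{n+1-2j\}*B_{k-j}$ gives
$$\Gamma'_{k-j}=\partial(\{n+1-2j\}*B_{k-j})=B_{k-j}\cup(\{n+1-2j\}*\partial B_{k-j}),$$
which coincides with $(\Gamma_{k-j}\setminus D_{k-j})\cup(\partial D_{k-j}*(n+1-2j))$ since $\Gamma_{k-j}\setminus D_{k-j}=B_{k-j}$ and $\partial D_{k-j}=\partial B_{k-j}$. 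Lemma~\ref{Claim 3} applied to the original pair ensures that $D_{k-j}\subset \Gamma_{k-j}$ satisfies the hypotheses of Lemma~\ref{Claim 1}, so one final invocation of Lemma~\ref{Claim 1} yields both $(*)$---that $\Gamma'_{k-j}$ is a $(k-j)$-neighborly PL $(2k-2j)$-sphere with vertex set $[n+1-2j]=V(\Sigma')\setminus F'_j$---and $(**)$---that when $k-j$ is odd, $\Gamma'_{k-j}$ inherits from $\Gamma_{k-j}$ the property that all missing faces have dimension $k-j$. The main obstacle is the vertex bookkeeping around the off-by-one shift $e_j\mapsto e'_j$; once $\Gamma'_{k-j}$ is identified as the sewing of $\Gamma_{k-j}$ along $D_{k-j}$, everything reduces to a systematic application of Lemmas~\ref{Claim 1} and~\ref{Claim 3}.
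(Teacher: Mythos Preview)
Your proof is correct and follows essentially the same approach as the paper. Both arguments hinge on the identification of $\Gamma'_{k-j}$ with the complex obtained from $\Gamma_{k-j}$ by sewing the vertex $n+1-2j$ onto $D_{k-j}$, after which a direct application of Lemma~\ref{Claim 1} yields conditions $(*)$ and $(**)$. The paper carries this out by an induction on $j$, doing the $j=1$ computation $\lk(n,\partial D_k)=\partial(\overline{e_1}*B_{k-1})=B_{k-1}\cup(\partial B_{k-1}*(n-1))$ explicitly and then declaring that the same computation serves as the inductive step; you instead isolate the recursive identity $\lk(F_i,\partial D_k)=\partial D_{k-i}$ up front and use it to compute all the $\Gamma'_{k-j}$ at once. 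This is the same computation unwound differently, and your version is arguably more explicit (in particular, you verify that each $F'_j$ is actually a face of $\Sigma'$, which the paper leaves implicit).
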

\begin{proof}
	Let $F'_j=e_1'\cup \dots \cup e_j'$ and $\Gamma_{k-j}'=\lk(F_j', \Sigma')$. We need to check that $\Gamma_{k-j}'$ satisfies conditions $(*)$ and $(**)$ of Lemma \ref{Claim 3}. We start with $j=0$. By Lemma \ref{Claim 3}, $D_k$ is $(k-1)$-neighborly with $V(D_k)=V(\Sigma)$ (if $k>1$), exactly $k$-stacked, and induced on its $(k-1)$-skeleton in $\Sigma$. Hence by Lemma \ref{Claim 1}, $\Gamma'_k=\Sigma'$ satisfies $(*)$ and $(**)$. 
	
	We continue to follow the notation of Lemma \ref{Claim 3}. When $j=1$, using that $e_1'=\{n, n+1\}$ and $\lk(n+1, \Sigma')=\partial D_k$, we see that
	\begin{equation*}
		\begin{split}
			\Gamma_{k-1}'&=\lk(e_1', \Sigma')=\lk(n, \partial D_k)=\lk(n, \partial (\overline{(n-1)n}*B_{k-1}))\\
			&=B_{k-1}\cup (\partial B_{k-1}*(n-1))=(\Gamma_{k-1}\backslash D_{k-1})\cup (\partial D_{k-1}*(n-1)).
		\end{split}
	\end{equation*}
	Thus, 	$\Gamma'_{k-1}$ is obtained from $\Gamma_{k-1}$ by sewing on $D_{k-1}$ and $V(\Gamma'_{k-1})=[n-1]$. Invoking Lemma \ref{Claim 1} once again we obtain that $\Gamma_{k-1}'$ satisfies $(*)$ and $(**)$. Induction on $j$, with the above argument serving as the induction step, finishes the proof.
\end{proof}

The above lemmas lead to the promised inductive procedure.

\begin{corollary} \label{main-cor}
	Assume $(\Sigma, E=\{e_1,\ldots,e_k\})$ satisfies all the assumptions of Lemma \ref{Claim 3}. Assume also that all missing faces of $\Sigma$ have dimension $k$.\footnote{If $k$ is odd, this assumption is already included the conditions of Lemma \ref{Claim 3}.} Then for all $\ell\geq f_0(\Sigma)$, there exists a $k$-neighborly PL $2k$-sphere $\Sigma_\ell$ with $\ell$ vertices,  all of whose missing faces have dimension $k$.  Furthermore, if $\Sigma$ is polytopal, then all spheres $\Sigma_\ell$ produced by this construction are also polytopal.
\end{corollary}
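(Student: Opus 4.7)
The plan is to induct on $\ell$, carrying along at each stage not only the sphere $\Sigma_\ell$ but also a distinguished sequence of pairwise disjoint edges $E^{(\ell)} = \{e_1^{(\ell)}, \ldots, e_k^{(\ell)}\}$ of $\Sigma_\ell$ such that the pair $(\Sigma_\ell, E^{(\ell)})$ satisfies all the assumptions of Lemma \ref{Claim 3}. The base case $\ell = f_0(\Sigma)$ is handled by setting $\Sigma_\ell := \Sigma$ and $E^{(\ell)} := E$. Note that when $k$ is even, the Corollary's additional hypothesis that every missing face of $\Sigma$ has dimension $k$ supplies the $(\ast\ast)$ part of Lemma \ref{Claim 3}'s hypothesis for $\Gamma_k^{(\ell)} = \Sigma$; when $k$ is odd this requirement is already baked into Lemma \ref{Claim 3}.

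For the inductive step, from $(\Sigma_\ell, E^{(\ell)})$ I would invoke Lemma \ref{Claim 3} to extract the PL $2k$-ball $D_k^{(\ell)} \subset \Sigma_\ell$, which is $(k-1)$-neighborly with $V(D_k^{(\ell)}) = V(\Sigma_\ell)$, exactly $k$-stacked, and induced on its $(k-1)$-skeleton. I would then set $\Sigma_{\ell+1}$ to be the complex obtained from $\Sigma_\ell$ by replacing $D_k^{(\ell)}$ with $\partial D_k^{(\ell)} \ast v$, where $v$ is a new vertex labeled $\ell+1$. The three properties just listed are exactly the hypotheses of Lemma \ref{Claim 1}, so that lemma immediately yields that $\Sigma_{\ell+1}$ is a $k$-neighborly PL $2k$-sphere on $\ell+1$ vertices whose missing faces all have dimension $k$. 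Simultaneously, Lemma \ref{Claim 4} produces the shifted edge sequence $E^{(\ell+1)}$ (with $e_j^{(\ell+1)} = \{\ell+2-2j,\ell+3-2j\}$ after an appropriate relabeling) for which $(\Sigma_{\ell+1}, E^{(\ell+1)})$ again satisfies the hypotheses of Lemma \ref{Claim 3}, closing the induction.

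For the polytopality claim, I would argue that when $\Sigma_\ell = \partial P_\ell$ the replacement of $D_k^{(\ell)}$ by $\partial D_k^{(\ell)} \ast v$ is exactly Shemer's sewing operation realized geometrically: choose $v$ in sufficiently generic position just beyond the affine hyperplanes of the facets of $P_\ell$ that comprise $D_k^{(\ell)}$, so that precisely these facets are visible from $v$. Since $D_k^{(\ell)}$ is a PL $2k$-ball sitting inside $\partial P_\ell$, this yields a polytope $P_{\ell+1}$ with $\partial P_{\ell+1} = \Sigma_{\ell+1}$, and the polytopality is inherited through the induction. The main obstacle here is purely bookkeeping rather than a new idea: all the delicate verification that the sewing step propagates the nested conditions on the links $\Gamma_{k-j}$ has already been absorbed into Lemma \ref{Claim 4}, so the Corollary is essentially a clean repackaging of Lemmas \ref{Claim 1}, \ref{Claim 3}, and \ref{Claim 4}, combined with the standard geometric realization of Shemer's sewing.
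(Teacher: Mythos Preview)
Your inductive argument for the first part is essentially the paper's proof: start with $(\Sigma, E)$, apply Lemma~\ref{Claim 4} repeatedly, and use Lemma~\ref{Claim 1} along the way to track that missing faces stay in dimension~$k$. (One small slip: for even $k$, the extra hypothesis on $\Sigma$ is not needed for condition~$(**)$ of Lemma~\ref{Claim 3} --- that condition is vacuous for $\Gamma_k$ when $k$ is even --- but rather for the ``furthermore'' clause of Lemma~\ref{Claim 1}. Your induction handles this correctly regardless.)

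The polytopality argument, however, has a genuine gap. You write that one can ``choose $v$ in sufficiently generic position just beyond the affine hyperplanes of the facets of $P_\ell$ that comprise $D_k^{(\ell)}$, so that precisely these facets are visible from $v$.'' But this is exactly the non-trivial content: for an \emph{arbitrary} PL ball $B$ in $\partial P$, there need not exist any exterior point from which precisely the facets of $B$ are visible. The set of points seeing exactly a prescribed collection of facets is an intersection of open half-spaces (beyond those supporting facets in $B$, beneath the rest), and this intersection may well be empty. What makes Shemer's sewing work polytopally is the specific \emph{nested-star} structure of the ball. The paper observes that, by construction,
\[
D_k \;=\; \st(F_1)\backslash\Big(\st(F_2)\backslash\big(\cdots\backslash(\st(F_{k-1})\backslash\st(F_k))\cdots\big)\Big),
\qquad F_j=e_1\cup\cdots\cup e_j,
\]
and then invokes \cite[Lemma~4.4]{Shemer}, which guarantees precisely that a ball of this iterated form can be sewn onto geometrically. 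You need to make this structural observation about $D_k$ explicit and cite Shemer's lemma; the bare assertion that a suitable $v$ exists does not stand on its own.
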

\proof The first part follows by starting with $(\Sigma,E)$ and inductively applying Lemma \ref{Claim 4}. (In the case that $k$ is even, the fact that all missing faces of resulting spheres have dimension $k$ follows from Lemma \ref{Claim 1}.) 

To prove the polytopality part, observe that we are sewing on the ball $D_k$, and that by definition, $D_k$ can be expressed as
$\st(F_1)\backslash \left(\st(F_2)\backslash \big( \dots\backslash \big(\st(F_{k-1})\backslash \st(F_k)\big)\dots\big)\right)$, where $F_j=e_1\cup \dots \cup e_j$ in the initial step (see Lemma \ref{Claim 3}) and $F_j=e'_1\cup \dots \cup e'_j$ , with $e_i'$ defined as in Lemma \ref{Claim 4} in the inductive steps. The polytopality statement then follows from a result of Shemer \cite[Lemma 4.4]{Shemer}.
\endproof

We are ready to prove Theorem \ref{thm: neighborly 2k-sphere m_k+1=0}.

\smallskip\noindent {\it Proof of Theorem \ref{thm: neighborly 2k-sphere m_k+1=0}: \ }
In the case of $k=2$, take $\Sigma=\partial \mathcal{P}^0_{42}$, the boundary complex of the $9$-vertex neighborly $5$-polytope $\mathcal{P}^0_{42}$ (see Section 7.2). All missing faces of $\Sigma$ have dimension $2$. One can easily check from (\ref{list of facets}) that $\lk(\{1,9\})$ in $\partial\mathcal{P}^0_{42}$ is a flag sphere with vertex set $\{2,3,\dots,8\}$. Hence $\Sigma$ and $E=\{e_1=\{1,9\}, e_2=\{3,6\}\in \lk(\{1,9\})\}$ satisfy the conditions of Lemma \ref{Claim 3}. The statement of the theorem then follows from Corollary \ref{main-cor}.

Assume now that $k=2i-1\geq 3$ is odd. Take $\Sigma$ to be the boundary complex of the polytope $Q_k$ from Definition \ref{Q_i}. By Proposition \ref{prop:neighborliness-of-Q_i},  $\partial Q_k$ is a $k$-neighborly $2k$-sphere with $2k+4$ vertices all of whose missing faces have dimension $k$. 
Corollary \ref{main-cor} and Lemma \ref{lem:links-of-Q_i} then imply the statement.
\hfill$\square$\medskip

In view of Theorem \ref{thm: neighborly 2k-sphere m_k+1=0} and Proposition \ref{Prop: vertex-minimal}, it is natural to ask the following (cf.~Question \ref{question:m_{k+1}=0}).
\begin{question}
	Let $k\geq 4$ be even. Is there an infinite family of neighborly $2k$-spheres (or neighborly $(2k+1)$-polytopes) with arbitrary number $n\geq 2k+5$ of vertices, all of whose missing faces have dimension $k$? 
\end{question}

The smallest $k$ for which we do not know if an infinite family of neighborly $2k$-spheres all of whose missing faces have dimension $k$ exists is $k=4$. The complex $8\_14\_1\_1$ from the Manifold page \cite{manifold_page} is a neighborly $8$-sphere with $14$ vertices all of whose missing faces have dimension $4$. Unfortunately, this sphere has no sequence $ \{e_1, e_2, e_3, e_4\}$ of edges that satisfies conditions of Lemma \ref{Claim 3}, and so our inductive procedure does not apply. We also do not know if there exists a neighborly $8$-sphere with fewer than $14$ vertices and $m_5=0$.

We close the paper with one additional problem. Let $d\geq 4$, let $1\leq k\leq \lfloor d/2\rfloor -1$, and let $\Delta$ be a simplicial $(d-1)$-sphere with $g_k\neq 0$. If $g_{k+1}=0$, then $\Delta$ is $k$-stacked, and so by Theorem \ref{lem:upper-bounds}, $m_{d-k}=g_k\neq 0$. On the other extreme is the case where $(g_{k+1})_{\langle k+1\rangle}=g_k$, or equivalently, $(g_k-1)^{\langle k \rangle}<g_{k+1}\leq g_k^{\langle k \rangle}$. In this case, by Theorem \ref{lem:upper-bounds}, $m_{d-k}=0$. This discussion, along with Theorem \ref{thm: neighborly 2k-sphere m_k+1=0} and Corollary~\ref{cor:characterization of m for 2-spheres}, motivates the following problem.

\begin{problem}
Let $g=(g_1,\ldots, g_{\lfloor d/2\rfloor})$ be an integer vector that satisfies all the conditions of the $g$-theorem and all of whose entries are strictly positive. What are the (additional) necessary and sufficient conditions on $g$ for the existence of a simplicial $(d-1)$-sphere $\Delta$ such that $g(\Delta)=g$ and $m_{d-i}(\Delta)=0$ for all $i\leq  \lceil d/2 \rceil -1$?
\end{problem}

\section*{Acknowledgments}
We are grateful to Gil Kalai for bringing to our attention recent results on the clique density problem, to Luz Elena Grisales 
G\'omez for carefully reading previous versions and spotting several mistakes, and to the anonymous referee for comments that led to Remark \ref{rem:Delta_i}.

{\small
	\bibliography{refs}

\begin{thebibliography}{10}

\bibitem{Adam-Hladky}
M.~Adamaszek and J.~Hladk\'{y}.
\newblock Upper bound theorem for odd-dimensional flag triangulations of
  manifolds.
\newblock {\em Mathematika}, 62(3):909--928, 2016.

\bibitem{Adiprasito-g-conjecture}
K.~Adiprasito.
\newblock Combinatorial {L}efschetz theorems beyond positivity.
\newblock arXiv:1812.10454v4, 2018.

\bibitem{AdiprasitoPapadakisPetrotou}
K.~Adiprasito, S.~A. Papadakis, and V.~Petrotou.
\newblock Anisotropy, biased pairings, and the {L}efschetz property for
  pseudomanifolds and cycles.
\newblock arXiv:2101.07245v2, 2021.

\bibitem{BD2}
B.~Bagchi and B.~Datta.
\newblock On polytopal upper bound spheres.
\newblock {\em Mathematika}, 59:493--496, 2013.

\bibitem{Barnette-sphere}
D.~Barnette.
\newblock Diagrams and {S}chlegel diagrams.
\newblock In {\em Combinatorial {S}tructures and their {A}pplications ({P}roc.
  {C}algary {I}nternat. {C}onf., {C}algary, {A}lta., 1969)}, pages 1--4. Gordon
  and Breach, New York-London-Paris, 1970.

\bibitem{BilleraLee}
L.~J. Billera and C.~W. Lee.
\newblock A proof of the sufficiency of {M}c{M}ullen's conditions for
  $f$-vectors of simplicial convex polytopes.
\newblock {\em J.~Combin.~Theory, Ser. A}, 31:237--255, 1981.

\bibitem{Finbow}
W.~Finbow.
\newblock Simplicial neighbourly 5-polytopes with nine vertices.
\newblock {\em Bol. Soc. Mat. Mex.}, 21:39--51, 2014.

\bibitem{Goodman}
A.~W. Goodman.
\newblock On sets of acquaintances and strangers at any party.
\newblock {\em Amer. Math. Monthly}, 66:778--783, 1959.

\bibitem{GoodmanPollack}
J.~Goodman and R.~Pollack.
\newblock Upper bounds for configurations and polytopes in {$\R^d$}.
\newblock {\em Discrete Comput. Geom.}, 1:219--227, 1986.

\bibitem{GMW}
J.~Gouveia, A.~Macchia, and A.~Wiebe.
\newblock Combining realization space models of polytopes.
\newblock {\em Discrete Comput. Geom.}, 69(2):505--542, 2023.

\bibitem{Gru-book}
B.~Gr\"unbaum.
\newblock {\em Convex polytopes}, volume 221 of {\em Graduate Texts in
  Mathematics}.
\newblock Springer-Verlag, New York, second edition, 2003.
\newblock Prepared and with a preface by Volker Kaibel, Victor Klee and
  G\"unter M. Ziegler.

\bibitem{GruSree}
B.~Gr\"{u}nbaum and V.~P. Sreedharan.
\newblock An enumeration of simplicial {$4$}-polytopes with {$8$} vertices.
\newblock {\em J. Combinatorial Theory}, 2:437--465, 1967.

\bibitem{Jockusch95}
W.~Jockusch.
\newblock An infinite family of nearly neighborly centrally symmetric
  {$3$}-spheres.
\newblock {\em J. Combin. Theory Ser. A}, 72(2):318--321, 1995.

\bibitem{Kalai87}
G.~Kalai.
\newblock Rigidity and the lower bound theorem.{ I}.
\newblock {\em Invent.~Math.}, 88:125--151, 1987.

\bibitem{Kal}
G.~Kalai.
\newblock Many triangulated spheres.
\newblock {\em Discrete Comput. Geom.}, 3(1):1--14, 1988.

\bibitem{Kalai-survey}
G.~Kalai.
\newblock Some aspects of the combinatorial theory of convex polytopes.
\newblock In {\em Polytopes: abstract, convex and computational ({S}carborough,
  {ON}, 1993)}, volume 440 of {\em NATO Adv. Sci. Inst. Ser. C Math. Phys.
  Sci.}, pages 205--229. Kluwer Acad. Publ., Dordrecht, 1994.

\bibitem{Kalai-skeletons}
G.~Kalai.
\newblock {\em Polytope skeletons and paths}.
\newblock Advanced Studies in Pure Mathematics. CRC Press Ser. Discrete Math.
  Appl., CRC, Boca Raton, FL, 1997.

\bibitem{KaruXiao}
K.~Karu and E.~Xiao.
\newblock On the anisotropy theorem of {P}apadakis and {P}etrotou.
\newblock {\em Algebraic Combinatorics}, 6(5):1313--1330, 2023.

\bibitem{Klee64}
V.~Klee.
\newblock A combinatorial analogue of {P}oincar\'e's duality theorem.
\newblock {\em Canad. J. Math.}, 16:517--531, 1964.

\bibitem{LPS}
H.~Liu, O.~Pikhurko, and K.~Staden.
\newblock The exact minimum number of triangles in graphs with given order and
  size.
\newblock {\em Forum of Mathematics, Pi}, 8, 2020.

\bibitem{manifold_page}
F.~H. Lutz.
\newblock The {M}anifold {P}age.
\newblock
  \url{https://www3.math.tu-berlin.de/IfM/Nachrufe/Frank_Lutz/stellar/}.

\bibitem{Mani1972}
P.~Mani.
\newblock Spheres with few vertices.
\newblock {\em J. Combinatorial Theory Ser. A}, 13:346--352, 1972.

\bibitem{McMullen70}
P.~McMullen.
\newblock The maximum numbers of faces of a convex polytope.
\newblock {\em Mathematika}, 17:179--184, 1970.

\bibitem{McMShep}
P.~McMullen and G.~C. Shephard.
\newblock {\em Convex polytopes and the upper bound conjecture}, volume~3 of
  {\em London Mathematical Society Lecture Note Series}.
\newblock Cambridge University Press, London-New York, 1971.
\newblock Prepared in collaboration with J. E. Reeve and A. A. Ball.

\bibitem{MoonMoser}
J.~W. Moon and L.~Moser.
\newblock On a problem of {T}ur\'an.
\newblock {\em Publ. Math. Inst. Hungar. Acad. Sci.}, 7:283--287, 1962.

\bibitem{MuraiNevo2013}
S.~Murai and E.~Nevo.
\newblock On the generalized lower bound conjecture for polytopes and spheres.
\newblock {\em Acta Math.}, 210(1):185--202, 2013.

\bibitem{MNZ}
S.~Murai, I.~Novik, and H.~Zheng.
\newblock Affine stresses, inverse systems, and reconstruction problems.
\newblock {\em Int. Math. Res. Not.}, 10:8540--8556, 2024.

\bibitem{Nagel}
U.~Nagel.
\newblock Empty simplices of polytopes and graded {B}etti numbers.
\newblock {\em Discrete Comput. Geom.}, 39:389--410, 2008.

\bibitem{Nikoforov}
V.~Nikiforov.
\newblock The number of cliques in graphs of given order and size.
\newblock {\em Trans. Amer. Math. Soc.}, 363:1599--1618, 2011.

\bibitem{NordhausStewart}
E.~A. Nordhaus and B.~M. Stewart.
\newblock Triangles in an ordinary graph.
\newblock {\em Can. J. Math.}, 15:33--41, 1963.

\bibitem{NZ-cs-neighborly-new}
I.~Novik and H.~Zheng.
\newblock New families of highly neighborly centrally symmetric spheres.
\newblock {\em Trans. Amer. Math. Soc.}, 375(6):4445--4475, 2022.

\bibitem{NZ-neighborly}
I.~Novik and H.~Zheng.
\newblock Many neighborly spheres.
\newblock {\em Math. Ann.}, 388:969--984, 2024.

\bibitem{Padrol-13}
A.~Padrol.
\newblock Many neighborly polytopes and oriented matroids.
\newblock {\em Discrete Comput. Geom.}, 50(4):865--902, 2013.

\bibitem{PapadakisPetrotou}
S.~A. Papadakis and V.~Petrotou.
\newblock The characteristic 2 anisotropicity of simplicial spheres.
\newblock arXiv:2012.09815, 2020.

\bibitem{Pfeifle-20}
J.~Pfeifle.
\newblock Positive {P}l\"{u}cker tree certificates for non-realizability.
\newblock {\em Exp. Math.}, 33(1):69--85, 2024.

\bibitem{PfeiZieg}
J.~Pfeifle and G.~M. Ziegler.
\newblock Many triangulated 3-spheres.
\newblock {\em Math. Ann.}, 330(4):829--837, 2004.

\bibitem{Razborov}
A.~Razborov.
\newblock On the minimal density of triangles in graphs.
\newblock {\em Combin. Probab. Computing}, 17:603--618, 2008.

\bibitem{Reiher}
C.~Reiher.
\newblock The clique density theorem.
\newblock {\em Annals of Math.}, 184:683--707, 2016.

\bibitem{Shemer}
I.~Shemer.
\newblock Neighborly polytopes.
\newblock {\em Israel J. Math.}, 43(4):291--314, 1982.

\bibitem{Stanley75}
R.~P. Stanley.
\newblock The upper bound conjecture and {C}ohen-{M}acaulay rings.
\newblock {\em Studies in Applied Math.}, 54:135--142, 1975.

\bibitem{Stanley80}
R.~P. Stanley.
\newblock The number of faces of a simplicial convex polytope.
\newblock {\em Adv.~Math.}, 35:236--238, 1980.

\bibitem{Z-flag}
H.~Zheng.
\newblock The flag upper bound theorem for 3- and 5-manifolds.
\newblock {\em Israel J. Math.}, 222:401--419, 2017.

\bibitem{Z-balanced}
H.~Zheng.
\newblock Ear decomposition and balanced neighborly simplicial manifolds.
\newblock {\em Electron. J. Combin.}, 27:P1.10, 2020.

\bibitem{Ziegler}
G.~M. Ziegler.
\newblock {\em Lectures on polytopes}, volume 152 of {\em Graduate Texts in
  Mathematics}.
\newblock Springer-Verlag, New York, 1995.

\end{thebibliography}
	\bibliographystyle{plain}
}
\end{document}